\newtheorem{assumption}{Assumption}
\def\sS{\mathcal{S}}
\def\sN{\mathcal{N}}
\def\bR{\mathbb{R}}
\def\sL{\mathcal{L}}
\def\sK{\mathcal{K}}
\def\sF{\mathcal{F}}
\def\sO{\mathcal{O}}
\def\tr{\tilde{r}}
\def\Ktotal{\bar{K}_{\mathrm{total}}}
\def\rev#1{{{\color{black}#1}}}
\def\Rev#1{{{\color{black}#1}}}
\newcommand{\Cmeo}{\mathcal{C}_{\mbox{\rm\scriptsize meo}}}
\newcommand{\Nmeo}{N_{\mbox{\rm\scriptsize meo}}}
\newcommand{\Flow}{F_{\mbox{\rm\scriptsize low}}}
\newcommand{\CNCG}{C_{\mbox{\rm\scriptsize NCG}}}
\def\bZ{\mathbb{Z}}
\DeclareMathOperator*{\st}{subject\;to}
\def\diag{\mathop{\hbox{\rm diag}}}
\def\spose#1{\hbox to 0pt{#1\hss}}
\def\text #1{\hbox{\quad#1\quad}}
\def\nthinsp{\mskip -2   mu}
\def\superstar{^{\raise 0.5pt\hbox{$\nthinsp *$}}}
\def\SUPERSTAR{^{\raise 0.5pt\hbox{$*$}}}
\def\lamstarT {\lambda^{\raise 0.5pt\hbox{$\nthinsp *$}T}}
\def\Lscr{{\cal L}}
\def\hbar{\skew{4.2}\bar h}
		\def\bkE{{\rm I\kern-.17em E}}
		\def\bkE{\mathbb{E}}
		\def\bk1{{\rm 1\kern-.17em l}}
		\def\bkD{{\rm I\kern-.17em D}}
		\def\bkR{{\rm I\kern-.17em R}}
		\def\bkP{{\rm I\kern-.17em P}}
		\def\bkY{{\bf \kern-.17em Y}}
		\def\bkZ{{\bf \kern-.17em Z}}
		\def\beq{\begin{eqnarray}}
		\def\bc{\begin{center}}
		\def\be{\begin{enumerate}}
		\def\bi{\begin{itemize}}
		\def\bs{\begin{small}}
		\def\bS{\begin{slide}}
		\def\ec{\end{center}}
		\def\ee{\end{enumerate}}
		\def\ei{\end{itemize}}
		\def\es{\end{small}}
		\def\eS{\end{slide}}
		\def\eeq{\end{eqnarray}}
		\def\qed{\quad \vrule height7.5pt width4.17pt depth0pt}
	\def\cp2problem#1#2#3#4{\fbox
		 {\begin{tabular*}{0.9\textwidth}
			{@{}l@{\extracolsep{\fill}}l@{\extracolsep{6pt}}l@{\extracolsep{\fill}}c@{}}
				#1 & & $#4 $ 
			\end{tabular*}}}
		\renewcommand{\emph}[1]{\textbf{#1}}
		\def\bk1{{\rm 1\kern-.17em l}}
		\def\bkD{{\rm I\kern-.17em D}}
		\def\bkR{{\rm I\kern-.17em R}}
		\def\bkP{{\rm I\kern-.17em P}}
		\def\bkZ{{\bf{Z}}}
\newcommand {\beeq}[1]{\begin{equation}\label{#1}}
\newcommand {\eeeq}{\end{equation}}
\newcommand {\bea}{\begin{eqnarray}}
\newcommand {\eea}{\end{eqnarray}}
\def\texitem#1{\par\smallskip\noindent\hangindent 25pt
               \hbox to 25pt {\hss #1 ~}\ignorespaces}
\def\st{\mbox{subject to}}
\def\sjw#1{\footnote{SJW: #1}}
\begin{document}

\title{Complexity of Proximal augmented Lagrangian for nonconvex optimization with nonlinear equality constraints
}

\titlerunning{Proximal augmented Lagrangian for nonconvex equality constrained problems}        

\author{Yue Xie         \and
        Stephen J. Wright 
}


\institute{Y. Xie \at
              Wisconsin Institute for discovery, University of Wisconsin, 330 N. Orchard St., Madison, WI
53715. \\
              \email{xie86@wisc.edu}           
           \and
           S. J. Wright \at
              Computer Sciences Department, University of Wisconsin, 1210 W. Dayton St., Madison, WI
53706.\\
          \email{swright@cs.wisc.edu}
}

\date{Received: date / Accepted: date}

\maketitle

\begin{abstract}
We analyze worst-case complexity of a Proximal augmented Lagrangian
(Proximal AL) framework for nonconvex optimization with nonlinear
equality constraints.  When an approximate first-order
(second-order) optimal point is obtained in the subproblem, an
$\epsilon$ first-order (second-order) optimal point for the original
problem can be guaranteed within $\mathcal{O}(1/ \epsilon^{2 - \eta})$
outer iterations (where $\eta$ is a user-defined parameter with
$\eta\in[0,2]$ for the first-order result and $\eta \in [1,2]$ for the
second-order result) when the proximal term coefficient $\beta$ and
penalty parameter $\rho$ satisfy $\beta = \mathcal{O}(\epsilon^\eta)$
and $\rho = \Omega (1/\epsilon^\eta)$, respectively. We also
investigate the total iteration complexity and operation complexity
when a Newton-conjugate-gradient algorithm is used to solve the
subproblems. \Rev{Finally, we discuss an adaptive scheme for determining a value of the parameter $\rho$ that satisfies the requirements of the analysis.}
\keywords{Optimization with nonlinear equality constraints \and Nonconvex optimization \and Proximal augmented Lagrangian \and Complexity analysis \and Newton-conjugate-gradient}
\subclass{68Q25 \and 90C06 \and 90C26 \and 90C30 \and 90C60}
\end{abstract}

\section{Introduction}

Nonconvex optimization problems with nonlinear equality constraints
are common in some areas, including matrix optimization and machine
learning, where such requirements as normalization, orthogonality, or
consensus must be satisfied. Relevant problems include dictionary
learning \cite{7148922}, distributed optimization
\cite{pmlr-v70-hong17a}, and spherical PCA \cite{Liu19a}. We consider
the formulation
\begin{equation}\label{eqcons-opt}
\min \,  f(x) \quad \st \quad c(x)=0,
\end{equation}
where $f: \bR^n \rightarrow \bR $, $c(x) \triangleq
(c_1(x),\dotsc,c_m(x))^T$, $c_i: \bR^n \rightarrow \bR$, $i = 1,
2,\dotsc, m$, and all functions are twice continuously
differentiable.

We have the following definitions related to points that satisfy
approximate first- and second-order optimality coniditions for
\eqref{eqcons-opt}. (Here and throughout, $\| \cdot \|$ denotes
  the Euclidean norm of a vector.)

\begin{definition}[$\epsilon$-1o] \label{def:1o}
  We say that $x$ is an $\epsilon$-1o solution of \eqref{eqcons-opt}
  if there exists $\lambda \in \bR^m$ such that 
\begin{align*}
\| \nabla f(x) + \nabla c(x) \lambda \| \leq \epsilon, \quad \| c(x) \| \leq \epsilon.
\end{align*}
\end{definition}

\begin{definition}[$\epsilon$-2o] \label{Def: epsilonKKT2}
We say that $x$ is an $\epsilon$-2o solution of \eqref{eqcons-opt} if
there exists $\lambda \in \bR^{ m }$ such that:
\begin{subequations} \label{eq:eKKT}
\begin{align}
\label{ineq1:epsilonKKT2}
& \| \nabla f(x) + \nabla c(x) \lambda  \| \leq \epsilon, \quad \| c(x) \|  \leq \epsilon, \\
\label{ineq3:epsilonKKT2}
& d^T \left( \nabla^2 f(x) + \sum_{i = 1}^m \lambda_i \nabla^2 c_i(x) \right) d \geq -\epsilon \| d \|^2,
\end{align}
\end{subequations}
for any $d \in S(x) \triangleq \{ d \in \bR^n \mid \nabla c(x)^T d = 0 \}$.
\end{definition}

These definitions are consistent with those of $\epsilon$-KKT and
$\epsilon$-KKT2 in \cite{birgin2018augmented}, and similar to those of
\cite{Haeser2018}, differing only in choice of norm and use of $\|
c(x) \| \le \epsilon$ rather than $c(x)=0$. The following theorem is
implied by several results in \cite{And17a} and
\cite{birgin2018augmented}, which consider a larger class of problem
than \eqref{eqcons-opt}. (A proof tailored to \eqref{eqcons-opt} is
supplied in the Appendix.)
\begin{theorem} \label{OptCon}
If $x^*$ is an local minimizer of \eqref{eqcons-opt}, then there
exists $\epsilon_k \rightarrow 0^+$ and $x_k \rightarrow x^*$ such
that $x_k$ is $\epsilon_k$-2o, thus $\epsilon_k$-1o.
\end{theorem}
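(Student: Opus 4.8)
The plan is a localized quadratic-penalty argument combined with the second-order necessary conditions for unconstrained minimization. Fix $\delta>0$ such that $f(x^*)\le f(x)$ whenever $\|x-x^*\|\le\delta$ and $c(x)=0$, pick $\rho_k\to\infty$, and for each $k$ let $x_k$ be a global minimizer of
\[
\phi_k(x)\;\triangleq\;f(x)+\frac{\rho_k}{2}\|c(x)\|^2+\frac14\|x-x^*\|^4
\]
over the compact ball $\{x\in\bR^n:\|x-x^*\|\le\delta\}$; such a minimizer exists by continuity. The quartic localizer is the key design choice: a quadratic term would suffice for the first-order conclusion, but it contributes a fixed $I$ to the Hessian and would stop the second-order residual from vanishing, whereas $\frac14\|x-x^*\|^4$ contributes only $O(\|x_k-x^*\|^2)$ to $\nabla^2\phi_k(x_k)$ and $O(\|x_k-x^*\|^3)$ to $\nabla\phi_k(x_k)$.

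First I would establish $x_k\to x^*$. Since $x^*$ is feasible, $\phi_k(x_k)\le\phi_k(x^*)=f(x^*)$; as $f$ is bounded below on the ball, this forces $\rho_k\|c(x_k)\|^2$ to stay bounded, hence $\|c(x_k)\|\to0$. Passing to a subsequential limit $\bar x$ in the ball, continuity gives $c(\bar x)=0$ and $f(\bar x)+\frac14\|\bar x-x^*\|^4\le f(x^*)$; local minimality of $x^*$ gives $f(\bar x)\ge f(x^*)$, so $\bar x=x^*$, and compactness promotes this to convergence of the whole sequence. Therefore $\|x_k-x^*\|<\delta$ for all large $k$, so $x_k$ is an unconstrained local minimizer of the smooth function $\phi_k$, which yields $\nabla\phi_k(x_k)=0$ and $\nabla^2\phi_k(x_k)\succeq0$.

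It remains to extract the estimates. Set $\lambda_k\triangleq\rho_k\,c(x_k)$. From $\nabla\phi_k(x_k)=\nabla f(x_k)+\nabla c(x_k)\lambda_k+\|x_k-x^*\|^2(x_k-x^*)=0$ we get $\|\nabla f(x_k)+\nabla c(x_k)\lambda_k\|=\|x_k-x^*\|^3\to0$, and $\|c(x_k)\|\to0$ by the previous step. Since
\[
\nabla^2\phi_k(x_k)=\nabla^2 f(x_k)+\sum_{i=1}^m\lambda_{k,i}\nabla^2 c_i(x_k)+\rho_k\,\nabla c(x_k)\nabla c(x_k)^T+\|x_k-x^*\|^2 I+2(x_k-x^*)(x_k-x^*)^T,
\]
and since $\nabla c(x_k)^T d=0$ kills the penalty Hessian for $d\in S(x_k)$, the inequality $\nabla^2\phi_k(x_k)\succeq0$ restricted to $S(x_k)$ gives $d^T\bigl(\nabla^2 f(x_k)+\sum_{i}\lambda_{k,i}\nabla^2 c_i(x_k)\bigr)d\ge-3\|x_k-x^*\|^2\|d\|^2$. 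Taking $\epsilon_k\triangleq\max\{\|x_k-x^*\|^3,\ \|c(x_k)\|,\ 3\|x_k-x^*\|^2,\ 1/k\}\to0^+$ makes $x_k$ an $\epsilon_k$-2o point, and hence also $\epsilon_k$-1o.

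The main difficulty is not any particular estimate but the construction of the auxiliary subproblem itself: one needs to localize so that $x^*$ serves as a valid comparison point and a minimizer is guaranteed to exist, and simultaneously to regularize with a term whose gradient and Hessian both decay at $x_k$ while its coefficient stays bounded away from zero, so the subsequential limit is forced to equal $x^*$ (a vanishing coefficient would only pin down the objective value, not the point). The quartic term is the simplest choice meeting all of these requirements; the rest is routine.
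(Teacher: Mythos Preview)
Your proposal is correct and follows essentially the same approach as the paper: localized quadratic-penalty minimization with a quartic regularizer, the same identification $\lambda_k=\rho_k c(x_k)$, and the same first- and second-order residual bounds $\|x_k-x^*\|^3$ and $3\|x_k-x^*\|^2$. The only cosmetic difference is in the constraint-violation term of $\epsilon_k$: you use $\|c(x_k)\|$ directly, while the paper bounds it by $\sqrt{2(f(x^*)-\inf_k f(x_k))/\rho_k}$ via the penalty inequality; both tend to zero, so this is immaterial.
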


Theorem~\ref{OptCon} states that being the limit of a sequence of
points satisfying Definition~\ref{def:1o} or Definition~\ref{Def:
  epsilonKKT2} for a decreasing sequence of $\epsilon$ is \Rev{a
  necessary} condition of a local minimizer. When certain
  constraint qualifications hold, a converse of this result is also
  true: $x^*$ satisfies first-order (KKT) conditions when $x_k$ is
  $\epsilon_k$-1o and second-order conditions when $x_k$ is
  $\epsilon_k$-2o (See \cite{And17a,And16a}). These observations
justify our strategy of seeking points that satisfy
Definition~\ref{def:1o} or \ref{Def: epsilonKKT2}.

The augmented Lagrangian (AL) framework is a penalty-type algorithm
for solving \eqref{eqcons-opt}, originating with
Hestenes~\cite{Hestenes1969} and Powell~\cite{MR0272403}.  Rockafellar
proposed in \cite{Roc76a} the proximal version of
this method, which has both theoretical and practical advantages. The
monograph \cite{bertsekas2014constrained} summarizes development of this
method during the 1970s, when it was known as the ``method of
multipliers''. Interest in the algorithm has resurfaced in recent
years because of its connection to ADMM \cite{boyd2011distributed}.

The augmented Lagrangian of \eqref{eqcons-opt} is defined as:
\[
\sL_\rho (x,\lambda) \triangleq f(x) + \sum_{i = 1}^m \lambda_i c_i(x) + \frac{\rho}{2} \sum_{i = 1}^m \Rev{| c_i(x) |^2} =
f(x) + \lambda^T c(x) + \frac{\rho}{2} \| c(x) \|^2,
\]
where $\lambda \triangleq (\lambda_1, \dotsc,\lambda_m)^T$. The
(ordinary) Lagrangian of \eqref{eqcons-opt} is $\sL_0(x, \lambda)$.

\subsection{Complexity measures}

In this paper, we discuss measures of worst-case complexity for
finding points that satisfy Definitions~\ref{def:1o} and \ref{Def:
  epsilonKKT2}. Since our method has two nested loops --- an outer
loop for the Proximal AL procedure, and an inner loop for solving the
subproblems --- we consider the following measures of complexity.
\begin{itemize}
  \item {\em Outer iteration complexity,} which corresponds to the
    number of outer-loop iterations of Proximal AL or some other
    framework;
  \item {\em Total iteration complexity,} which measures the total
    number of iterations of the inner-loop procedure that is required
    to find points satisfying approximate optimality of the subproblems;
  \item {\em Operation complexity,} which measures the number of some
    unit operation (in our case, computation of a matrix-vector
    product involving the Hessian of the Proximal augmented Lagrangian) required to find
    approximately optimal points.
\end{itemize}
We also use the term ``total iteration complexity'' in connection with
algorithms that have only one main loop, such as those whose
complexities are shown in Table~\ref{tab: complex.}.

We prove results for all three types of complexity for the Proximal AL
procedure, where the inner-loop procedure is a
Newton-conjugate-gradient (Newton-CG) algorithm for the unconstrained
nonconvex subproblems. Details are given in
Section~\ref{sec:contributions}.

\begin{algorithm}
\caption{Augmented Lagrangian (AL)}\label{Alg: AL}
\be
\item[0.]  Initialize $x_0$, $\lambda_0$ and $\rho_0 > 0$, $\Lambda \triangleq [\Lambda_{\min}, \Lambda_{\max}] $, $\tau \in (0,1)$, $\gamma > 1$; Set $k \leftarrow 0$;
\item[1.] Update $x_k$:  Find approximate solution $x_{k+1}$ to $\min_x \,  \Lscr_{\rho_k}(x,\lambda_k)$;
\item[2.] Update $\lambda_k$: $ \lambda_{k+1} \leftarrow P_{\Lambda} ( \lambda_k + \rho_k c(x_{k+1}) )$;
\item[3.] Update $\rho_k$: if $k = 0$ or $\| c(x_{k+1}) \|_{\infty}
  \leq \tau \| c(x_k) \|_{\infty}$, set $\rho_{k+1} = \rho_k$;
  otherwise, set $\rho_{k+1} = \gamma \rho_k$;
\item[4.] If termination criterion is satisfied, STOP; otherwise, $k \leftarrow k + 1$ and return to Step 1.
\ee
\end{algorithm}

\subsection{Related work}\label{subsec: relate.work.}

\paragraph{AL for nonconvex optimization.}  
We consider first the basic augmented Lagrangian framework outlined in
Algorithm~\ref{Alg: AL}. When $f$ is a nonconvex function, convergence
of the augmented Lagrangian framework has been studied in
\cite{Birgin2010,birgin2018augmented}, with many variants described
in
\cite{And08a,Andreani2010,And19a,And18b,Curtis2015}. In
\cite{birgin2018augmented}, Algorithm~\ref{Alg: AL} is investigated
and generalized for a larger class of problems, showing in particular
that if $x_{k+1}$ is a first-order (second-order) approximate solution
of the subproblem, with error driven to $0$ as $k \to \infty$,
then every feasible limit point is an approximate first-order
(second-order) KKT point of the original problem. In
\cite{Birgin2010}, it is shown that when the subproblem in
Algorithm~\ref{Alg: AL} is solved to approximate global optimality
with error approaching $0$, the limit point is feasible and is a
global solution of the original problem.

There are few results in the literature on outer iteration complexity
in the nonconvex setting. Some quite recent results appear in \cite{Yuan.arXiv2019,birgin2020complexity}. In \cite{Yuan.arXiv2019},
the authors apply a general version of augmented Lagrangian to
nonconvex optimization with both equality and inequality
constraints. With an aggressive updating rule for the penalty
parameter, they show that the algorithm obtains an approximate KKT
point (whose exact definition is complicated, but similar to our
definition of $\epsilon$-1o optimality when only equality constraints
are present) within $\sO(\epsilon^{-2/(\alpha-1)})$ outer-loop
iterations, where $\alpha > 1$ is an algorithmic parameter. This
complexity is improved to $\sO(|\log \epsilon|)$ when boundedness of
the sequence of penalty parameters is assumed. Total iteration
complexity measures are obtained for the case of linear equality
constraints when the subproblem is solved with a $p$-order method ($p
\ge 2$). In \cite{birgin2020complexity}, the authors study an
augmented Lagrangian framework named  \texttt{ALGENCAN} to problems with
equality and inequality constraints. An $\epsilon$-accurate first-order point (whose precise definition is again similar to our
$\epsilon$-1o optimality in the case of equality constraints only) is
obtained in $\sO( | \log \epsilon | )$ outer iterations when the
penalty parameters are bounded. The practicality of the assumption of
bounded penalty parameters in these two works \cite{Yuan.arXiv2019,birgin2020complexity} is open to question,
since the use of an increasing sequence of penalty parameters is
critical to both approaches, and there is no clear prior reason why
the sequence should be bounded\footnote{\Rev{Circumstances under which the penalty parameter sequence of \texttt{ALGENCAN} is bounded are discussed in \cite[Section~5]{And08a}.}}. 

\paragraph{Proximal AL for nonconvex optimization: Linear equality constraints.}  
The Proximal augmented Lagrangian framework, with fixed positive
parameters $\rho$ and $\beta$, is shown in Algorithm~\ref{Alg:
  Prox-PDA}.

\begin{algorithm}
\caption{Proximal augmented Lagrangian (Proximal AL)}\label{Alg: Prox-PDA}
\be
\item[0.]  Initialize $x_0$, $\lambda_0$ and $\rho > 0$, $\beta > 0$; Set $k \leftarrow 0$;
\item[1.] Update $x_k$:  Find approximate solution $x_{k+1}$ to $\min_x \, \Lscr_\rho(x,\lambda_k) + \frac{\beta}{2} \| x - x_k \|^2$;
\item[2.] Update $\lambda_k$: $ \lambda_{k+1} \leftarrow \lambda_k + \rho c(x_{k+1})$;
\item[3.] If termination criterion is satisfied, STOP; otherwise, $k \leftarrow k + 1$ and return to Step 1.
\ee
\end{algorithm}

For this proximal version, in the case of {\em linear} constraints
$c(\cdot)$, outer iteration complexity results become accessible in
the nonconvex regime
\cite{Hajinezhad2019,pmlr-v70-hong17a,Jiang2019,doi:10.1137/19M1242276}. The
paper \cite{pmlr-v70-hong17a} analyzes the outer iteration complexity
of this approach (there named ``proximal primal dual algorithm
(Prox-PDA)'') to obtain a first-order optimal point, choosing a
special proximal term to make each subproblem strongly convex and
suitable for distributed implementation. An outer iteration complexity
estimate of $\sO(\epsilon^{-1})$ is proved for an $\sqrt{\epsilon}$-1o
point. This result is consistent with our results in this paper when
the choice of $\beta$ and $\rho$ is independent of $\epsilon$ and
$c(x)$ is linear.



The paper \cite{Hajinezhad2019} proposes a ``perturbed proximal primal
dual algorithm,'' a variant of Algorithm~\ref{Alg: Prox-PDA}, to
obtain outer iteration complexity results for a problem class where
the objective function may be nonconvex and nonsmooth. In particular,
an outer iteration complexity of $\sO(\epsilon^{-2})$ is required to
obtain $\epsilon$-stationary solution, where the latter term is
defined in a way that suits that problem class. A modified inexact
Proximal AL method is investigated in
\cite{doi:10.1137/19M1242276}. Here, an exponentially weighted average of
previous updates is used as the anchor point in the proximal term, total iteration complexity of $\sO(\epsilon^{-2})$ to locate an $\epsilon$ stationary point similar to $\epsilon$-1o is derived and
a certain kind of linear convergence is proved for quadratic
programming (QP). The paper \cite{Jiang2019} derives outer iteration
complexity of $\sO(\epsilon^{-2})$ for a proximal ADMM procedure to
find an $\epsilon$ stationary solution defined for their problem
class.


To our knowledge, outer iteration complexity of Proximal AL in the
case of {\em nonlinear} $c(x)$
and \Rev{its complexity} for convergence to second-order optimal
points have not yet been studied.

\paragraph{Complexity for constrained nonconvex optimization.}
For constrained nonconvex optimization, worst-case total iteration
complexity results of various algorithms to find $\epsilon$-perturbed
first-order and second-order optimal points have been obtained in
recent years. If only first-derivative information is used, total
iteration complexity to obtain an $\epsilon$-accurate first-order
optimal point may be $\mathcal{O}( \epsilon^{-2} )$
\cite{Bian2015,Haeser2018,2018arXiv181002024N}. If Hessian information
is used (either explicitly or via Hessian-vector products), total
iteration complexity for an $\epsilon$-accurate first-order point can
be improved to $\mathcal{O}( \epsilon^{-3/2} )$
\cite{Bian2015,Haeser2018,10.1093/imanum/drz074}, while the total
iteration complexity to obtain an $\epsilon$-accurate second-order
point is typically $\mathcal{O}( \epsilon^{-3} )$
\cite{Bian2015,Haeser2018,2018arXiv181002024N,10.1093/imanum/drz074}. More
  details about these results can be found in Table~\ref{tab:
    complex.}.


\Rev{Other approaches focus on nonlinear equality constraints and
   seek evaluation complexity bounds (``Evaluation
    complexity'' refers to the number of evaluations of $f$ and $c$
    and their derivatives required, and corresponds roughly to our
    ``total iteration complexity''.) for approximate first-order optimality.  An algorithm
 based on linear approximation of the exact penalty function for
 \eqref{eqcons-opt} is described in \cite{Car11a}, and attains a
 worst-case evaluation complexity of $\sO(\epsilon^{-5})$ \rev{by
   using only function and gradient information.} Two-phase
 approaches, which first seek an approximately feasible point by
 minimizing the nonlinear least-squares objective $\| c(x) \|_2^2$ (or equivalently $\| c(x) \|$),
and then apply a target-chasing method to find an approximate first-order
 point for \eqref{eqcons-opt}, are described in
 \cite{doi:10.1137/120869687,Car14a}. (See Table~\ref{tab:
     complex.}.) Extensions of these techniques to approximate
 second-order optimality is not straightforward; most such efforts
 focus on special cases such as convex constraints. A recent work that
 tackles the general case is \cite{Car19a}, which again considers the
 two-phase approach and searches for approximate first-, second-, and
 third-order critical points. Specific definitions of the
   critical points are less interpretable; we do not show them in
   Table~\ref{tab: complex.}. They are related to scaled KKT
   conditions for the first order point, and to local optimality with
   tolerance of a function of $\epsilon$ for second and third order
   points.}

\begin{table}
\scriptsize
  \caption{Total iteration or evaluation complexity estimates for constrained nonconvex optimization procedures. Here
    $X=\diag(x)$ and $\bar{X} = \diag(\min\{ x, {\bf 1} \})$. $\tilde{\sO}$ represents $\sO$ with logarithm factors hidden. Gradient or Hessian in parenthesis means that the algorithm uses only gradient or both gradient and Hessian information, respectively. $p$th derivative means that the algorithm needs to evaluate function derivatives up to $p$th order.}
  \label{tab: complex.}
  \centering
  \begin{tabular}{llcl}
  \toprule
  Point type
  & Complexity &  Constraint type & Lit. \\
  \hline
  $
  \begin{cases} 
  | [ X \nabla f(x) ]_i | \le \epsilon, & \mbox{if } x_i < (1 - \epsilon/2)b_i \\ 
  [\nabla f(x)]_i \le \epsilon, & \mbox{if } x_i \ge (1 - \epsilon/2)b_i
  \end{cases}
  $ 
   & $ \begin{array}{c}
   
   \end{array}
   \sO(\epsilon^{-2})$ (gradient)
  & $0 \le x \le b$  & \cite{Bian2015}  \\
  $ \| X \nabla f(x) \|_\infty \le \epsilon$, $X \nabla^2 f(x) X \succeq - \sqrt{\epsilon}I_n$ & $\sO(\epsilon^{-3/2})$ (Hessian) & $x \ge 0$  & \cite{Bian2015} \\
  \hline
  $ 
  \begin{cases}  
  Ax = b, \; x > 0, \; \nabla f(x) + A^T \lambda \ge -\epsilon {\bf 1} \\
  \| X ( \nabla f(x) + A^T \lambda ) \|_\infty \le \epsilon
  \end{cases}
  $ & $\sO(\epsilon^{-2})$ (gradient) & $ Ax = b, x \ge 0$ & \cite{Haeser2018} \\
  $ 
  \begin{cases}  
  Ax = b, \; x > 0, \; \nabla f(x) + A^T \lambda \ge -\epsilon {\bf 1} \\
  \| X ( \nabla f(x) + A^T \lambda ) \|_\infty \le \epsilon \\
  d^T ( X \nabla^2f(x) X + \sqrt{\epsilon} I ) d \ge 0, \\
  \forall d \in \{ d \mid AXd = 0 \}
  \end{cases}
  $ & $\sO(\epsilon^{-3/2})$ (Hessian) &  $ Ax = b, x \ge 0$ & \cite{Haeser2018} \\
  \hline
 $ 
\begin{cases} 
 \left| \left\{ \begin{array}{l}
 \min_s \left\langle \nabla f(x), s \right\rangle \\ 
 \ s.t.\  x + s \in \sF, \; \| s \| \le 1
 \end{array}  \right\} \right| \le \epsilon_g \\
 \left| \left\{ \begin{array}{l}
 \min_d d^T \nabla^2 f(x) d \\ 
 \ s.t.\ x + d \in \sF, \; \| d \| \le 1, \\
 \quad \quad \left\langle \nabla f(x), d \right\rangle \le 0
 \end{array} \right\} \right| \le \epsilon_H
\end{cases} 
 $ &  $\begin{array}{c}
 \sO(\max\{\epsilon_g^{-2}, \epsilon_H^{-3} \}) \\
 \mbox{(Hessian)}
 \end{array}$ &  
 $
\begin{array}{c}
 x \in \sF, \\
 \mbox{$\sF$ is closed}\\
 \mbox{and convex}
 \end{array} 
 $
  & \cite{2018arXiv181002024N} \\
 \hline
 $
\begin{cases}
& x > 0, \; \nabla f(x) \ge - \epsilon {\bf 1}, \; \| \bar{X} \nabla f(x) \|_\infty \le \epsilon,  \\
&  \bar{X} \nabla^2 f(x) \bar{X} \succeq -\sqrt{\epsilon} I 
\end{cases} 
$
& $\tilde{\sO}(\epsilon^{-3/2})$ (Hessian) & $x \ge 0$ & \cite{10.1093/imanum/drz074} \\
\hline
$
\begin{array}{c}
 \| \nabla f(x) + \nabla c(x) \lambda \| \le \epsilon, \| c(x) \| \le \epsilon, \mbox{ or} \\ \mbox{$x$ is an approximate critical point of $\| c(x) \|$}
\end{array} 
$
& $ \sO(\epsilon^{-5}) $ (gradient)
& $c(x) = 0$ & \cite{Car11a} \\
\hline
$
\begin{array}{c}
\| c(x) \| \le \epsilon_p, \ \| \nabla f(x) + \nabla c(x) \lambda \|  \le \epsilon_d \| (\lambda,1) \| \\
\mbox{or } \| \nabla c(x) c(x) \| \le \epsilon_d \| c(x) \|
\end{array}
$ & $ \begin{array}{c}
\sO(\epsilon_d^{-3/2} \epsilon_p^{-1/2}) \\
\mbox{(Hessian)}
\end{array} $ & c(x) = 0 & \cite{doi:10.1137/120869687} \\
\hline 
$
\begin{array}{c}
\| \nabla f(x) + \nabla c(x) \lambda \| \le \epsilon$, $ \| c(x) \| \le \epsilon, \mbox{ or} \\
\| \nabla c(x) \mu \| \le \epsilon, \| c(x) \| \ge \kappa \epsilon.
\end{array}
$ & $\sO(\epsilon^{-2}) $ (gradient) & $c(x) = 0$ & \cite{Car14a} \\
\hline
$
\begin{array}{c}
\mbox{x is $\epsilon$ approximate first order critical point } \\
\mbox{of the constrained problem or of $\| c(x) \|$}
\end{array}$ & $
\begin{array}{c}
\sO(\epsilon^{-(p+2)/p}) \\
\mbox{($p$th derivative)}
\end{array}
$ & $
\begin{array}{c}
c(x) = 0, x \in \sF \\
\sF \mbox{ is closed} \\
\mbox{and convex}
\end{array}
$ & \cite{Car19a} \\
$
\begin{array}{c}
\mbox{x is $\epsilon$ approximate $q$th order critical point } \\
\mbox{of the constrained problem or of $\| c(x) \|$} \\
q = 1,2,3.
\end{array}
$ & $
\begin{array}{c}
\sO(\epsilon^{-2q - 1}) \\
\mbox{($q$th derivative)}
\end{array}
$ & $
\begin{array}{c}
c(x) = 0, x \in \sF \\
\sF \mbox{ is closed} \\
\mbox{and convex}
\end{array}
$ & \cite{Car19a} \\
\bottomrule
\end{tabular}
\end{table}
  



  \subsection{Contributions} \label{sec:contributions}
  
We apply the Proximal AL framework of Algorithm~\ref{Alg: Prox-PDA} to
\eqref{eqcons-opt} for nonlinear constraints $c(x)$. Recalling
Definitions~\ref{def:1o} and \ref{Def: epsilonKKT2} of approximately
optimal points, we show that when approximate first-order
(second-order) optimality is attained in the subproblems, the outer
iteration complexity to obtain an $\epsilon$-1o ($\epsilon$-2o) point
is $\mathcal{O}(1/\epsilon^{2 - \eta})$ if we let $\beta =
\sO(\epsilon^\eta)$ and $\rho = \Omega(1/\epsilon^\eta)$, where
$\eta$ is a user-defined parameter with $\eta\in[0,2]$ for the
first-order result and $\eta \in [1,2]$ for the second-order result.
We require uniform boundedness and full rank of the constraint
Jacobian on a certain bounded level set, and show that the primal and
dual sequence of Proximal AL is bounded and the limit point satisfies
first-order KKT conditions.
    
We also derive total iteration complexity of the algorithm when the
Newton-CG algorithm of \cite{Royer2019} is used to solve the
subproblem at each iteration of Algorithm~\ref{Alg: Prox-PDA}. The
operation complexity for this overall procedure is also described,
taking as unit operation the computation of a Hessian-vector
product. When $c(x)$ is linear and $\eta = 2$, the total iteration
complexity matches the known results in literature for second-order
algorithms: $\sO(\epsilon^{-3/2})$ for an $\epsilon$-1o point and
$\sO(\epsilon^{-3})$ for an $\epsilon$-2o point.

\Rev{Finally, we present a scheme for determining the algorithmic parameter
$\rho$ adaptively, by increasing it until convegence to an
approximately-optimal point is identified within the expected number
of iterations.}


\subsection{Organization}

In Section~\ref{sec: Prelim}, we list the notations and main
assumptions used in the paper. We discuss outer iteration
  complexity of Proximal AL in Section~\ref{sec: out.comp.}. Total
  iteration complexity and operation complexity are derived in
  Section~\ref{sec: total.iter.comp.}. A framework for determining the
  parameter $\rho$ in Proximal AL is proposed in Section~\ref{sec:
    varyrho}. We summarize the paper and discuss future work in
Section~\ref{sec: conclusion}. Most proofs appear in the main body of
the paper; some elementary results are proved in the Appendix.

%

  
\section{Preliminaries} \label{sec: Prelim}

\paragraph{Notation.}
We use $\| \cdot \|$ to denote the Euclidean norm of a vector and $\|
\cdot \|_2$ to denote the operator $2$-norm of a matrix. For a given
matrix $H$, we denote by $\sigma_{\min}(H)$ its minimal singular
value and by $\lambda_{\min}(H)$ its minimal eigenvalue. We denote steps in $x$ and $\lambda$ as follows:
\begin{equation} \label{eq:DxDl}
  \Delta x_{k+1}
\triangleq x_{k+1} - x_k, \quad \Delta \lambda_{k+1} \triangleq
\lambda_{k+1} - \lambda_k.
\end{equation}

In estimating complexities, we use order notation $\sO(\cdot)$ in the
usual sense, and $\tilde{\sO}$ to hide factors that are logarithmic in
the arguments. We use $\beta (\alpha) = \Omega(\gamma (\alpha ) )$ (where
$\beta(\alpha)$ and $\gamma(\alpha)$ are both positive) to indicate that
$\beta (\alpha)/ \gamma(\alpha)$ is bounded below by a positive real number for all $\alpha$ sufficiently small.




\paragraph{Assumptions.}



The following assumptions are used throughout this work.

\begin{assumption} \label{Ass: Compact sublevel set}
Suppose that there exists $\rho_0 \geq 0$ such that $f(x) +
\frac{\rho_0}{2} \| c(x) \|^2$ has compact level sets, that is, for
all $\alpha \in \bR$, the set
\begin{equation} \label{eq:S0a}
  S_{\alpha}^0 \triangleq \left\{ x \Big| f(x) + \frac{\rho_0}{2} \| c(x) \|^2 \leq
  \alpha \right\}
\end{equation}
is empty or compact.
\end{assumption}
Assumption~\ref{Ass: Compact sublevel set} holds in any of the following cases:
\begin{itemize}
\item[1.] $f(x) + \frac{\rho_0}{2} \| c(x) \|^2$ is coercive for some
  $\rho_0 \ge 0$.
\item[2.] $f(x)$ is strongly convex.
\item[3.] $f(x)$ is bounded below and $c(x)=x^Tx-1$, as occurs in
  orthonormal dictionary learning applications.
\item[4.] $f(x) \triangleq \frac{1}{2} x^T Q
x - p^Tx$, $c(x) \triangleq Ax - b$, $Q$ is positive definite on
$\mbox{null}(A) \triangleq \{ x \mid Ax = 0 \}$.
  \end{itemize}

An immediate consequence of this assumption is the following, 
proof of which appears in the Appendix.
\begin{lemma} \label{lm: Lb}
Suppose that Assumption~\ref{Ass: Compact sublevel set} holds, then $
f(x) + \frac{\rho_0}{2} \| c(x) \|^2 $ is lower bounded.
\end{lemma}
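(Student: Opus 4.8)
The plan is to show that a function with compact level sets is bounded below, which is a standard fact from analysis. First I would consider two cases. If every level set $S_\alpha^0$ is empty, then $f(x) + \frac{\rho_0}{2}\|c(x)\|^2 > \alpha$ for all $x$ and all $\alpha \in \bR$; taking $\alpha \to \infty$ would force the function to be identically $+\infty$, which contradicts the fact that it is a real-valued function of $x \in \bR^n$. So in the genuine case there exists some $\alpha_0 \in \bR$ for which $S_{\alpha_0}^0$ is nonempty, and by Assumption~\ref{Ass: Compact sublevel set} it is compact.

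Next I would argue on that nonempty compact set. Since $f$ and $c$ are continuous (indeed twice continuously differentiable by the standing hypotheses on \eqref{eqcons-opt}), the function $g(x) \triangleq f(x) + \frac{\rho_0}{2}\|c(x)\|^2$ is continuous, hence attains its minimum on the compact set $S_{\alpha_0}^0$ at some point $x^*$, with value $g^* \triangleq g(x^*)$. I would then check that $g^*$ is a global lower bound: for any $x \in \bR^n$, either $g(x) > \alpha_0$ (in which case $g(x) > \alpha_0 \ge g^*$, using that $x^* \in S_{\alpha_0}^0$ gives $g^* = g(x^*) \le \alpha_0$), or $g(x) \le \alpha_0$, meaning $x \in S_{\alpha_0}^0$, so $g(x) \ge g^* $ by definition of the minimizer. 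Either way $g(x) \ge g^*$, so $g$ is bounded below by $g^*$.

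There is no real obstacle here; the only mild subtlety is handling the degenerate possibility that all level sets are empty, which I dispatch by the $\alpha \to \infty$ argument above (equivalently, just evaluate $g$ at any fixed point $\bar x$ and note $\bar x \in S_\alpha^0$ for $\alpha = g(\bar x)$, so not all level sets are empty). I would phrase the write-up in the second, cleaner form: pick any $\bar x \in \bR^n$, set $\alpha_0 \triangleq g(\bar x)$; then $\bar x \in S_{\alpha_0}^0$, so $S_{\alpha_0}^0$ is nonempty and therefore compact, and continuity of $g$ yields a minimizer on it whose value bounds $g$ below globally by the dichotomy above.
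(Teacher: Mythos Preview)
Your proof is correct. The paper's argument uses the same ingredients (continuity of $g(x) \triangleq f(x) + \frac{\rho_0}{2}\|c(x)\|^2$ and compactness of a level set) but is structured as a proof by contradiction: assuming $g$ is unbounded below, one obtains a sequence $\{x_k\}$ with $g(x_k) < -k$, which eventually lies in a fixed compact level set $S_\alpha^0$; an accumulation point $x^*$ then exists by compactness, and continuity forces $g(x^*)$ to be finite while $g(x_k) \to -\infty$, a contradiction. Your direct route via the extreme value theorem is slightly cleaner and avoids the contradiction framing; both arguments are elementary and equally valid.
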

Therefore, Assumption~\ref{Ass: Compact sublevel set} implies
\begin{equation} \label{eq:defL}
\bar L \triangleq \inf_{x \in \bR^n} \left\{ f(x) +
  \frac{\rho_0}{2} \| c(x) \|^2 \right\} > -\infty. 
\end{equation}
We use this definition of $\bar L$ throughout this paper whenever Assumption~\ref{Ass: Compact sublevel set} holds.

The second assumption concerns certain smoothness and nondegeneracy assumptions on $f$ and $c$ over a compact set.
\begin{assumption}\label{Ass: weak}
Given a compact set $\sS \subseteq \bR^n$, there exist positive
constants $M_f$, $M_c$, $\sigma$, $L_c$ such that the following conditions
on functions $f$ and $c$ hold.
\begin{enumerate}
\item[(i)] $\| \nabla f(x) \| \leq M_f$, $ \| \nabla f(x) - \nabla f(y) \| \le L_f \| x - y \| $, for all $x, y \in \sS$.
\item[(ii)] $ \| \nabla c(x) \|_2 \leq M_c $, $\sigma_{\min} (
  \nabla c(x)) \geq \sigma > 0$ for all $x \in \sS$.
\item[(iii)] $ \| \nabla c(x) - \nabla c(y) \|_2 \leq L_c \| x - y
  \|$, for all $ x, y \in \sS. $
\end{enumerate}
\end{assumption}

This assumption may allow a general class of problems; in particular,
(i) holds if $f(x)$ is smooth and $\nabla f(x)$ is locally Lipschitz
continuous on a neighborhood of $\sS$. (ii) holds when $c(x)$ is smooth on a neighborhood of $\sS$ and
satisfies an LICQ condition over $\sS$,
and (iii) holds if $\nabla c(x)$ is locally Lipschitz continuous
on $\sS$.

\Rev{
\begin{assumption} \label{Ass: f.ub}
Suppose that $f(x) \le \bar U$ for any $x \in \{ x \mid \| c(x) \| \le 1 \}$.
\end{assumption} 
A sufficient condition for Assumption~\ref{Ass: f.ub} to hold is the
compactness of $\{ x \mid \| c(x) \| \le 1 \}$. This assumption is not
needed if $c(x_0)=0$, that is, the initial point is feasible.}

\section{Outer iteration complexity of Proximal AL} \label{sec: out.comp.}

In this section, we derive the outer iteration complexity of Proximal
AL (Algorithm~\ref{Alg: Prox-PDA}) when the subproblem is solved
inexactly.
We assume that $x_{k+1}$ in Step 1 of Algorithm~\ref{Alg: Prox-PDA}
satisfies the following approximate first-order optimality condition:
\begin{align} \label{eq:1oi}
\nabla_x \Lscr_{\rho} (x_{k+1}, \lambda_k) + \beta ( x_{k+1} - x_k ) = \tr_{k+1}, \quad \mbox{for all $k \ge 0$,}
\end{align}
where $\tr_{k+1}$ is some error vector. We additionally assume that
\begin{align} \label{eq:decr}
\Lscr_{\rho}(x_{k+1}, \lambda_k) + \frac{\beta}{2} \| x_{k+1} - x_k \|^2 \leq \Lscr_{\rho}(x_k, \lambda_k), \quad \mbox{for all $k \ge 0$.}
\end{align}
This condition can be achieved if we choose $x_k$ as the initial point
of the subproblem in Step 1 of Algorithm~\ref{Alg: Prox-PDA}, with
subsequent iterates decreasing the objective of this subproblem.  To
analyze convergence, we use a Lyapunov function defined as follows for
any $k \geq 1$, inspired by \cite{pmlr-v70-hong17a}:
\begin{align} \label{Def: P-func}
P_k \triangleq \sL_\rho(x_k, \lambda_k) + \frac{\beta}{4} \| x_k - x_{k-1} \|^2. 
\end{align}
For any $k \geq 1$, we have that
\begin{align}
\notag
& P_{k+1} - P_k = \sL_\rho(x_{k+1}, \lambda_{k+1}) - \sL_\rho(x_k, \lambda_k) + \frac{\beta}{4} \| \Delta x_{k+1}  \|^2 - \frac{\beta}{4} \| \Delta x_k \|^2 \\
\notag
& = \sL_\rho(x_{k+1}, \lambda_{k+1}) - \sL_\rho(x_{k+1}, \lambda_k) + \sL_\rho(x_{k+1}, \lambda_k) - \sL_\rho(x_k, \lambda_k) \\
\notag
& \quad + \frac{\beta}{4} \| \Delta x_{k+1} \|^2 - \frac{\beta}{4} \| \Delta x_k \|^2 \\
\notag
& \rev{= (\lambda_{k+1} - \lambda_k)^T c(x_{k+1}) + \sL_\rho(x_{k+1}, \lambda_k) - \sL_\rho(x_k, \lambda_k)} \\
\notag
& \Rev{ \quad + \frac{\beta}{4} \| \Delta x_{k+1} \|^2 - \frac{\beta}{4} \| \Delta x_k \|^2} \\
\notag
& \Rev{= \frac{1}{\rho} \| \Delta \lambda_{k+1} \|^2 + \sL_\rho(x_{k+1}, \lambda_k) - \sL_\rho(x_k, \lambda_k) + \frac{\beta}{4} \| \Delta x_{k+1} \|^2 - \frac{\beta}{4} \| \Delta x_k \|^2} \\
\notag
& \overset{\eqref{eq:decr}}{\leq} \frac{1}{\rho} \| \Delta \lambda_{k+1}  \|^2 - \frac{\beta}{2} \| \Delta x_{k+1} \|^2 + \frac{\beta}{4} \| \Delta x_{k+1} \|^2 - \frac{\beta}{4} \| \Delta x_k  \|^2 \\
\label{ineq-1: Decr. of Ly. func.}
& = \frac{1}{\rho} \| \Delta \lambda_{k+1}  \|^2 - \frac{\beta}{4} \| \Delta x_{k+1} \|^2 - \frac{\beta}{4} \| \Delta x_k \|^2,
\end{align}
\Rev{where the fourth equality holds because of Step 2 in
  Algorithm~\ref{Alg: Prox-PDA}}. We start with a technical result on
bounding $ \| \Delta \lambda_{k+1} \|^2 = \| \lambda_{k+1} - \lambda_k \|^2 $.
  
\begin{lemma}[Bound for $\| \lambda_{k+1} - \lambda_k  \|^2$]\label{lm: bound for mul. diff.-inexactsubproblem}
Consider Algorithm~\ref{Alg: Prox-PDA} with \eqref{eq:1oi} and
\eqref{eq:decr}. Suppose that for a fixed $k \ge 1$, Assumption~\ref{Ass: weak} holds
  for some set $\sS$ and that $x_k, x_{k+1} \in \sS$. Then,
\begin{equation}
\label{ineq: bound for mul. diff.-inexactsubproblem}
\| \lambda_{k+1} - \lambda_k \|^2 \leq C_1 \| \Delta x_{k+1} \|^2 +
    C_2 \| \Delta x_k \|^2 
 + \frac{16 M_c^2 }{\sigma^4} \| \tr_k \|^2 + \frac{4}{\sigma^2} \| \tr_{k+1} - \tr_k \|^2,
\end{equation}
where $C_1$ and $C_2$ are defined by
\begin{equation} \label{eq:C1C2}
  C_1 \triangleq   \frac{4}{\sigma^2} \left( L_f +  \frac{ L_c M_f }{\sigma} + \beta \right)^2, \quad
  C_2 \triangleq \frac{4}{\sigma^2} \left( \beta + \frac{2M_c \beta}{\sigma} \right)^2.
  \end{equation}
\end{lemma}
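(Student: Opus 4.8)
The plan is to convert the inexact first-order condition \eqref{eq:1oi}, written at both indices $k$ and $k-1$, into a linear identity for $\nabla c(x_{k+1})(\lambda_{k+1}-\lambda_k)$, then invert $\nabla c(x_{k+1})$ on its column space and take norms. The key preliminary observation is that Step~2 of Algorithm~\ref{Alg: Prox-PDA} absorbs the penalty gradient into the multiplier: since $\lambda_{k+1}=\lambda_k+\rho c(x_{k+1})$, we have $\nabla_x\Lscr_\rho(x_{k+1},\lambda_k)=\nabla f(x_{k+1})+\nabla c(x_{k+1})(\lambda_k+\rho c(x_{k+1}))=\nabla f(x_{k+1})+\nabla c(x_{k+1})\lambda_{k+1}$, so \eqref{eq:1oi} is equivalent to $\nabla f(x_{k+1})+\nabla c(x_{k+1})\lambda_{k+1}+\beta\Delta x_{k+1}=\tr_{k+1}$; the same identity with $k$ replaced by $k-1$ reads $\nabla f(x_k)+\nabla c(x_k)\lambda_k+\beta\Delta x_k=\tr_k$.

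Subtracting the two identities and regrouping the left-hand side as $\nabla c(x_{k+1})(\lambda_{k+1}-\lambda_k)+(\nabla c(x_{k+1})-\nabla c(x_k))\lambda_k$ gives
\[
\nabla c(x_{k+1})(\lambda_{k+1}-\lambda_k)=(\tr_{k+1}-\tr_k)-(\nabla f(x_{k+1})-\nabla f(x_k))-\beta(\Delta x_{k+1}-\Delta x_k)-(\nabla c(x_{k+1})-\nabla c(x_k))\lambda_k .
\]
By Assumption~\ref{Ass: weak}(ii), for every $x\in\sS$ the Jacobian $\nabla c(x)$ has full column rank with least singular value at least $\sigma$, so $\nabla c(x)^\dagger\triangleq(\nabla c(x)^T\nabla c(x))^{-1}\nabla c(x)^T$ is a left inverse with $\|\nabla c(x)^\dagger\|_2\le1/\sigma$. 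Left-multiplying the displayed identity by $\nabla c(x_{k+1})^\dagger$ recovers $\lambda_{k+1}-\lambda_k$ on the left, and in the last term I would substitute $\lambda_k=\nabla c(x_k)^\dagger(\tr_k-\nabla f(x_k)-\beta\Delta x_k)$, which is legitimate because that right-hand side equals $\nabla c(x_k)\lambda_k$ by the second identity and $\nabla c(x_k)^\dagger\nabla c(x_k)=I$ (note that this uses only $x_k\in\sS$, not $x_{k-1}\in\sS$).

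The crux of the estimate is the resulting term $(\nabla c(x_{k+1})-\nabla c(x_k))\nabla c(x_k)^\dagger(\tr_k-\nabla f(x_k)-\beta\Delta x_k)$, which I would bound by splitting the parenthesized vector into its three summands and estimating $\|\nabla c(x_{k+1})-\nabla c(x_k)\|_2$ in two different ways, depending on the summand. Against the $O(1)$ vector $\nabla f(x_k)$, I would use the Lipschitz bound $\|\nabla c(x_{k+1})-\nabla c(x_k)\|_2\le L_c\|\Delta x_{k+1}\|$ from Assumption~\ref{Ass: weak}(iii) together with $\|\nabla c(x_k)^\dagger\nabla f(x_k)\|\le M_f/\sigma$, producing the $\tfrac{L_cM_f}{\sigma}\|\Delta x_{k+1}\|$ contribution inside $C_1$; against $\tr_k$ and $\beta\Delta x_k$, I would instead use $\|\nabla c(x_{k+1})-\nabla c(x_k)\|_2\le\|\nabla c(x_{k+1})\|_2+\|\nabla c(x_k)\|_2\le2M_c$, producing the $\tfrac{2M_c}{\sigma}\|\tr_k\|$ term and the $\tfrac{2M_c\beta}{\sigma}\|\Delta x_k\|$ part of $C_2$. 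Combining this with $\|\nabla f(x_{k+1})-\nabla f(x_k)\|\le L_f\|\Delta x_{k+1}\|$, $\|\Delta x_{k+1}-\Delta x_k\|\le\|\Delta x_{k+1}\|+\|\Delta x_k\|$, and $\|\nabla c(x_{k+1})^\dagger\|_2\le1/\sigma$ yields
\[
\|\lambda_{k+1}-\lambda_k\|\le\frac1\sigma\left[\Big(L_f+\frac{L_cM_f}{\sigma}+\beta\Big)\|\Delta x_{k+1}\|+\Big(\beta+\frac{2M_c\beta}{\sigma}\Big)\|\Delta x_k\|+\frac{2M_c}{\sigma}\|\tr_k\|+\|\tr_{k+1}-\tr_k\|\right],
\]
and squaring, with the inequality $(a+b+c+d)^2\le4(a^2+b^2+c^2+d^2)$, gives \eqref{ineq: bound for mul. diff.-inexactsubproblem} with $C_1,C_2$ exactly as in \eqref{eq:C1C2}.

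I expect this last bookkeeping step to be the only delicate point: a uniform $2M_c$ bound on $\|\nabla c(x_{k+1})-\nabla c(x_k)\|_2$ would leave a spurious constant term coming from $\nabla f(x_k)$, whereas a uniform $L_c\|\Delta x_{k+1}\|$ bound would leave spurious cross terms such as $\|\Delta x_{k+1}\|\,\|\tr_k\|$ and $\|\Delta x_{k+1}\|\,\|\Delta x_k\|$; choosing the bound separately for each summand is exactly what makes the right-hand side collapse into the stated sum of squares. The remaining ingredients — absorbing the penalty gradient into the multiplier, inverting the full-rank Jacobian, and the triangle inequalities — are routine.
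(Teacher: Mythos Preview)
Your proposal is correct and follows essentially the same approach as the paper's proof: absorb the penalty gradient into the multiplier via Step~2, subtract the resulting identities at consecutive indices, invert the full-rank Jacobian to isolate $\lambda_{k+1}-\lambda_k$, bound $\lambda_k$ from the index-$k$ identity, and handle the $(\nabla c(x_{k+1})-\nabla c(x_k))\lambda_k$ term by using the Lipschitz estimate $L_c\|\Delta x_{k+1}\|$ against the $\nabla f(x_k)$ component and the crude $2M_c$ estimate against the $\beta\Delta x_k$ and $\tr_k$ components before squaring. The only cosmetic difference is that you phrase the inversion via the explicit pseudoinverse $\nabla c(x)^\dagger$ whereas the paper writes the equivalent singular-value inequality $\sigma\|\Delta\lambda_{k+1}\|\le\|\nabla c(x_{k+1})\Delta\lambda_{k+1}\|$ directly.
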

\begin{proof}
The first-order optimality condition \eqref{eq:1oi} for Step 1 implies that for all
$t \geq 0$, we have
\begin{align}
\notag
& \nabla f(x_{t+1}) + \nabla c(x_{t+1}) \lambda_t + \rho \nabla c(x_{t+1}) c(x_{t+1}) + \beta (x_{t+1} - x_t) = \tr_{t+1}. \\
\label{equality1-bound for multiplier (inexact)}
\implies \ &  \nabla f(x_{t+1}) + \nabla c(x_{t+1}) \lambda_{t+1} + \beta (x_{t+1} - x_t) =  \tr_{t+1}.
\end{align}
Likewise, by replacing $t$ with $t-1$, for $t \ge 1$, we obtain
\begin{align} \label{equality2-bound for multiplier (inexact)}
\nabla f(x_t) + \nabla c(x_t) \lambda_t + \beta (x_t - x_{t-1} ) = \tr_t.
\end{align}
By combining \eqref{equality1-bound for multiplier (inexact)} and
\eqref{equality2-bound for multiplier (inexact)} and using the
notation \eqref{eq:DxDl} along with $\Delta \tr_{t+1} \triangleq
\tr_{t+1} - \tr_t$, we have for any $t \ge 1$ that
\begin{align*}
\nabla f(x_{t+1}) - \nabla f(x_t) & + \nabla c(x_{t+1}) \Delta \lambda_{t+1}  \\
& + (\nabla c(x_{t+1}) - \nabla c(x_t)) \lambda_t + \beta ( \Delta x_{t+1} - \Delta x_t ) = \Delta \tr_{t+1},
\end{align*}
which by rearrangement gives
\begin{align*}
\notag
- \nabla c(x_{t+1}) \Delta \lambda_{t+1} 
& = \nabla f(x_{t+1}) -
\nabla f(x_t) + ( \nabla c(x_{t+1}) - \nabla c(x_t) ) \lambda_t  \\
\notag
& \quad  + \beta (\Delta x_{t+1} - \Delta x_t ) - \Delta \tr_{t+1}.
\end{align*}
For the given $k \ge 1$, since $\sigma$ is a lower bound on the smallest singular value of
$\nabla c(x_{k+1})$ by Assumption~\ref{Ass: weak}, we have that
\begin{align}
  \nonumber
  \| \Delta \lambda_{k+1} \| & \le \frac{1}{\sigma}
  \big(
  \| \Rev{\nabla f(x_{k+1})} -\nabla f(x_k) \| + \| \nabla c(x_{k+1}) - \nabla c(x_k) \| \| \lambda_k \|  \\
  \label{eq:us8 (inexact)}
  & \quad +  \beta (\| \Delta x_{k+1}\| + \| \Delta x_{k}\|) +  \| \Delta \tr_{k+1} \|
  \big).
\end{align}
We have from \eqref{equality2-bound for multiplier (inexact)} that
\[
\nabla c(x_k) \lambda_k = -\nabla f(x_k) - \beta (x_k-x_{k-1})  + \tr_k,
\]
so that
\begin{equation} \label{inequality1 - bound for multiplier (inexact)}
  \| \lambda_k \| \le \frac{1}{\sigma} \left( \| \nabla f(x_k) \| +
  \beta \| \Delta x_k\| + \| \tr_k \| \right) \le \frac{1}{\sigma} \left(
  M_f + \beta \| \Delta x_k\|  + \| \tr_k \| \right).
  \end{equation}
We also have from Assumption~\ref{Ass: weak} that
\begin{align}\label{inequality2 - bound for multiplier (inexact)}
  \| \nabla c(x_{k+1}) - \nabla c(x_k) \| \leq L_c \| x_{k+1} - x_k \|, \quad
   \| \nabla c(x_{k+1}) - \nabla c(x_k) \| \leq 2 M_c.
\end{align}
By substituting Assumption~\ref{Ass: weak}(i),
\eqref{inequality1 - bound for multiplier (inexact)}, and \eqref{inequality2 -
  bound for multiplier (inexact)} into \eqref{eq:us8 (inexact)}, we obtain the following for the given $k \ge 1$.
\begin{align*}
& \| \Delta \lambda_{k+1} \| \\
  & \leq \frac{1}{\sigma} \bigg( L_f \| \Delta x_{k+1} \| + \beta \| \Delta x_{k+1} \| + \beta \| \Delta x_k \|  \\
 &  \quad + \| \nabla c(x_{k+1}) - \nabla c(x_k) \|_2 \left( \frac{1}{\sigma} M_f + \frac{\beta}{\sigma} \| \Delta x_k \|  + \frac{1}{\sigma} \| \tr_k \| \right)  + \| \Delta \tr_{k+1} \|  \bigg) \\
  & \le \frac{1}{\sigma}\bigg( L_f \| \Delta x_{k+1}\| + \beta \| \Delta x_{k+1} \| + \beta \| \Delta x_k \| + \frac{L_c M_f }{\sigma} \| \Delta x_{k+1} \|  + \frac{2M_c \beta}{\sigma} \| \Delta x_k \| \\
&  \quad + \frac{2M_c}{\sigma} \| \tr_k \| + \| \Delta \tr_{k+1} \| \bigg) \\
    & \le \frac{1}{\sigma} \left( L_f +  \frac{L_c M_f}{\sigma} + \beta \right) \| \Delta x_{k+1} \| +
    \frac{1}{\sigma} \left( \beta + \frac{2M_c \beta}{\sigma} \right) \| \Delta x_k \| \\
  & \quad + \frac{2M_c}{\sigma^2} \| \tr_k \| + \frac{1}{\sigma} \| \Delta \tr_{k+1} \|.
\end{align*}
By using the bound $(a + b + c +d)^2 \le 4(a^2 + b^2 + c^2 + d^2)$ for
positive scalars $a$, $b$, $c$, $d$, and using the definition
\eqref{eq:C1C2}, we obtain the result. \qed
\end{proof}

For the rest of this section, we use the following definitions for
$c_1$ and $c_2$:
\begin{align}
\label{def-c1c2-inexact}
& c_1 \triangleq \frac{\beta}{4} - \frac{C_1}{\rho}, \quad
c_2 \triangleq \frac{\beta}{4} - 
\frac{C_2}{\rho},
\end{align}
where $C_1$ and $C_2$ are defined in \eqref{eq:C1C2}. Next we show
that sequences $\{ x_k \}$ and $\{ \lambda_k \}$ are bounded and $\{
P_k \}_{k \geq 1} $ satisfies certain properties under
Assumption~\ref{Ass: Compact sublevel set} - \ref{Ass: f.ub}, for
suitable choices of the algorithmic parameters.

\begin{lemma} \label{lm: xk.bd.} 
\Rev{
Consider Algorithm~\ref{Alg: Prox-PDA} with conditions \eqref{eq:1oi}
and \eqref{eq:decr}. Choose $\{ \tr_k \}_{k \ge 1}$ such that
$\sum_{k=1}^\infty \| \tr_k \|^2 \le R < +\infty$ and $\| \tr_k \| \le
1$, for all $k \ge 1$. Let $\{ P_k \}_{k \geq 1}$ be defined as in
\eqref{Def: P-func}. Suppose that Assumptions~\ref{Ass: Compact
  sublevel set} and \ref{Ass: f.ub} hold and define
\begin{equation} \label{eq:alpha_l0-inexact}
  \hat \alpha \triangleq 7 \bar U + 7 C_0 - 6 \bar L + 13 \| \lambda_0 \|^2 + 2,
\end{equation}
where $C_0 > 0$ is any fixed constant.
Suppose that Assumption~\ref{Ass: weak} holds with $\sS = S_{\hat
  \alpha}^0$. Choose $\rho$ and $\beta$ such that
\begin{equation} \label{ineq: rho.bd.}
\rho \geq \max \left\{ \frac{ ( M_f + \beta D_S + 1 )^2 }{2\sigma^2} +
\rho_0, \frac{16 (M_c^2 + \sigma^2) R }{ \sigma^4 }, 3 \rho_0, 1 \right\},
\end{equation}
where 
\begin{align} \label{def: DS}
D_S \triangleq \max\{ \| x - y \| \mid x, y \in S_{\hat \alpha}^0
\},
\end{align}
and that $c_1$ and $c_2$ defined in \eqref{def-c1c2-inexact}
are both positive. Suppose that $x_0$ satisfies $\| c(x_0) \|^2
  \le \min \{ C_0/\rho , 1 \}$, where $C_0$ is the constant appearing
  in \eqref{eq:alpha_l0-inexact}. Then
\begin{equation} \label{eq:sj7}
\{ x_k \}_{k
  \geq 0} \subseteq S_{\hat \alpha}^0 \quad \mbox{and} \quad  \|\lambda_k \| \leq \frac{M_f
+ \beta D_S + 1}{\sigma}, \;\; \mbox{for all $k \ge 1$.}
\end{equation}
Furthermore, \eqref{ineq: bound for
  mul. diff.-inexactsubproblem} and the following inequality hold for
any $k \geq 1$,
\begin{equation} \label{ineq: Decr. of Ly. func.-inexact}
P_{k+1} - P_k \le - c_1 \| \Delta x_{k+1} \|^2 - c_2 \| \Delta x_k \|^2 + \frac{16M_c^2}{ \rho \sigma^4} \| \tr_k \|^2 + \frac{4}{\rho \sigma^2} \| \tr_{k+1} - \tr_k \|^2.
\end{equation}}
\end{lemma}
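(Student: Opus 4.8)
The plan is to prove by induction on $k$ that $x_k\in S_{\hat\alpha}^0$ for every $k\ge 0$, with $S_{\hat\alpha}^0$ the level set of \eqref{eq:S0a}. Once this is in hand, everything else in the conclusion is a by-product: the multiplier bound in \eqref{eq:sj7} is read off from \eqref{inequality1 - bound for multiplier (inexact)} with $\|\Delta x_k\|\le D_S$ (using \eqref{def: DS}) and $\|\tr_k\|\le1$; \eqref{ineq: bound for mul. diff.-inexactsubproblem} is exactly Lemma~\ref{lm: bound for mul. diff.-inexactsubproblem} applied with $\sS=S_{\hat\alpha}^0$; and \eqref{ineq: Decr. of Ly. func.-inexact} follows by substituting that lemma into \eqref{ineq-1: Decr. of Ly. func.} and using $c_1,c_2>0$. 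The engine of the induction is the elementary estimate
\[
f(x)+\tfrac{\rho_0}{2}\|c(x)\|^2=\sL_\rho(x,\lambda)-\lambda^Tc(x)-\tfrac{\rho-\rho_0}{2}\|c(x)\|^2\le\sL_\rho(x,\lambda)+\frac{\|\lambda\|^2}{2(\rho-\rho_0)},
\]
valid because \eqref{ineq: rho.bd.} forces $\rho>\rho_0$ (indeed $\rho\ge3\rho_0$ and $\rho\ge1$); the last step is Young's inequality on $-\lambda^Tc(x)$. Applied with $x=x_{k+1}$ and the previous multiplier $\lambda=\lambda_k$, it reduces the inductive step to bounding $\sL_\rho(x_{k+1},\lambda_k)$ and $\|\lambda_k\|$.

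For the base case $k=0$: the initial-point condition $\|c(x_0)\|^2\le\min\{C_0/\rho,1\}$ combined with Assumption~\ref{Ass: f.ub} ($f(x_0)\le\bar U$ since $\|c(x_0)\|\le1$) and $\rho\ge3\rho_0$ gives $f(x_0)+\tfrac{\rho_0}{2}\|c(x_0)\|^2\le\bar U+C_0/6$, which also shows $\bar L\le\bar U+C_0/6$ via \eqref{eq:defL}, so the generous coefficients in \eqref{eq:alpha_l0-inexact} make the right-hand side $\le\hat\alpha$ and $x_0\in S_{\hat\alpha}^0$. For the inductive step, assume $x_0,\dots,x_k\in S_{\hat\alpha}^0$. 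Since Assumption~\ref{Ass: weak} holds on $\sS=S_{\hat\alpha}^0$ and consecutive iterates lie there, \eqref{inequality1 - bound for multiplier (inexact)} bounds $\|\lambda_j\|\le(M_f+\beta D_S+1)/\sigma$ for $1\le j\le k$, and then the first term of \eqref{ineq: rho.bd.} forces $\|\lambda_j\|^2/(2(\rho-\rho_0))\le1$ and $\|\lambda_j\|^2/\rho\le2$. For $\sL_\rho(x_{k+1},\lambda_k)$, use \eqref{eq:decr} and the definition \eqref{Def: P-func}: $\sL_\rho(x_{k+1},\lambda_k)\le\sL_\rho(x_k,\lambda_k)\le P_k$; then telescope the descent bound obtained by feeding Lemma~\ref{lm: bound for mul. diff.-inexactsubproblem} into \eqref{ineq-1: Decr. of Ly. func.} (valid for $1\le j\le k-1$ because $x_j,x_{j+1}\in S_{\hat\alpha}^0$ and $c_1,c_2>0$), using $\sum_j\|\tr_j\|^2\le R$, $\sum_j\|\tr_{j+1}-\tr_j\|^2\le4R$ and $\rho\ge16(M_c^2+\sigma^2)R/\sigma^4$, to get $P_k\le P_1+1$; and finally bound $P_1$ via \eqref{eq:decr} at index $0$ and Step~2 of Algorithm~\ref{Alg: Prox-PDA}, which give $P_1\le\sL_\rho(x_0,\lambda_0)+\|\lambda_1-\lambda_0\|^2/\rho$, with $\sL_\rho(x_0,\lambda_0)$ bounded as in the base case and $\|\lambda_1-\lambda_0\|^2/\rho\le2\|\lambda_1\|^2/\rho+2\|\lambda_0\|^2/\rho$ controlled since $x_1\in S_{\hat\alpha}^0$ is already known. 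Substituting into the key estimate bounds $f(x_{k+1})+\tfrac{\rho_0}{2}\|c(x_{k+1})\|^2$ by a constant in terms of $\bar U,\bar L,\|\lambda_0\|,C_0$ that the coefficients in \eqref{eq:alpha_l0-inexact} were chosen to dominate (again using $\bar L\le\bar U+C_0/6$ to handle $-\bar L$), so $x_{k+1}\in S_{\hat\alpha}^0$ and the induction closes. The case $k=0$ of the step is simpler: there $\lambda_k=\lambda_0$, $\sL_\rho(x_1,\lambda_0)\le\sL_\rho(x_0,\lambda_0)$ by \eqref{eq:decr}, and no Lyapunov bound is needed.

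The main obstacle is the apparent circularity: to bound $\|\lambda_k\|$ one needs $x_{k-1},x_k\in S_{\hat\alpha}^0$, yet proving $x_{k+1}\in S_{\hat\alpha}^0$ seems to need information at iteration $k+1$. This is broken by two structural facts that the proof must exploit in exactly the order above --- the key estimate uses only the previous multiplier $\lambda_k$, and $P_k$ depends only on $x_0,\dots,x_k$ and $\lambda_k$ --- so every quantity entering the step $k\to k+1$ is already under control, the sole occurrence of $x_{k+1}$ being inside $\sL_\rho(x_{k+1},\lambda_k)$, which is tamed by the monotonicity property \eqref{eq:decr} alone. The remaining work is careful (but routine) constant bookkeeping to verify that the explicit numbers $7,7,-6,13,2$ in $\hat\alpha$ and the explicit lower bounds on $\rho$ in \eqref{ineq: rho.bd.} are large enough to absorb all the bounds accumulated along the way.
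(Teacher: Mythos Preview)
Your proposal is structurally identical to the paper's proof: induction on membership in $S_{\hat\alpha}^0$, the ``key estimate'' $f(x)+\tfrac{\rho_0}{2}\|c(x)\|^2\le\sL_\rho(x,\lambda)+\|\lambda\|^2/(2(\rho-\rho_0))$, the chain $\sL_\rho(x_{k+1},\lambda_k)\le P_k\le P_1+1$, and the multiplier bound from \eqref{inequality1 - bound for multiplier (inexact)}. The paper carries the induction hypothesis as a triple (level-set membership, multiplier bound, explicit $P_i$ bound) rather than just the first, but this is only an organizational difference.

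The one point that is not purely bookkeeping is your bound on $P_1$. Your split $\|\lambda_1-\lambda_0\|^2/\rho\le 2\|\lambda_1\|^2/\rho+2\|\lambda_0\|^2/\rho\le 4+2\|\lambda_0\|^2$ yields $P_1\le \bar U+C_0+4+\tfrac{5}{2}\|\lambda_0\|^2$ and hence $f(x_{k+1})+\tfrac{\rho_0}{2}\|c(x_{k+1})\|^2\le \bar U+C_0+6+\tfrac{5}{2}\|\lambda_0\|^2$; comparing with $\hat\alpha$ and using your inequality $\bar L\le\bar U+C_0/6$ leaves a gap $5C_0+\tfrac{21}{2}\|\lambda_0\|^2-4$, which is negative when $C_0$ is small and $\lambda_0=0$. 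The paper avoids this by not splitting: it writes $\|\lambda_1-\lambda_0\|^2/\rho=\rho\|c(x_1)\|^2$ and bounds $\tfrac{\rho}{6}\|c(x_1)\|^2$ from \eqref{eq:decr} at $k=0$ against $\bar U+2\|\lambda_0\|^2+C_0-\bar L$, so that the $-6\bar L$ in $\hat\alpha$ is produced, not merely absorbed. With that one change, your argument closes exactly.
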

\begin{proof}
Note that Assumption~\ref{Ass: f.ub} implies that
\begin{align} \label{ineq: f.ub.}
f(x_0) \le \bar U,
\end{align}
since $\| c(x_0) \| \le 1$. Therefore,
\begin{align} \label{ineq: AL0.bd.}
\notag
\Lscr_\rho(x_0, \lambda_0) & = f(x_0) + \lambda_0^T c(x_0) + \frac{\rho}{2} \| c(x_0) \|^2 \\
\notag
& \le f(x_0) + \frac{ \| \lambda_0 \|^2 }{ 2 \rho } + \frac{\rho}{2} \| c(x_0) \|^2 + \frac{\rho}{2} \| c(x_0) \|^2 \\
& \overset{ \eqref{ineq: f.ub.} }{\le} \bar U + \frac{1}{2 \rho} \| \lambda_0 \|^2 + C_0.
\end{align}
and
\begin{align}
\notag
\bar U + C_0 - \bar L & \overset{ \eqref{ineq: AL0.bd.} }{\ge} f(x_0) + \lambda_0^T c(x_0) + \frac{\rho}{2} \| c(x_0) \|^2 - \frac{ \| \lambda_0 \|^2 }{ 2\rho } - \bar L \\
\notag
& \overset{ (\rho \ge 3 \rho_0) }{\ge} f(x_0) + \frac{\rho_0}{2} \| c(x_0) \|^2 - \bar L  + \lambda_0^T c(x_0) + \frac{\rho}{3} \| c(x_0) \|^2 - \frac{ \| \lambda_0 \|^2 }{ 2\rho } \\
\notag
& \ge 0 + \frac{\rho}{3} \left\| c(x_0) + \frac{3\lambda_0}{ 2\rho } \right\|^2 - \frac{3}{4 \rho} \| \lambda_0 \|^2 - \frac{ \| \lambda_0 \|^2 }{ 2\rho } \\
\label{ineq: nn0}
& \overset{ ( \rho \ge 1 ) }{\ge} - \frac{5}{4} \| \lambda_0 \|^2
\end{align}
We prove the theorem by induction. We show that the following
bounds hold for all $i \geq 1$:
\begin{subequations} \label{Hyp-inexact}
  \begin{align}
    \label{Hyp-inexact.1}
    x_i & \in S_{\hat \alpha}^0, \\
    \label{Hyp-inexact.2}
    \|  \lambda_i \|^2 & \leq \frac{ (M_f + \beta D_S + 1 )^2 }{ \sigma^2 } \leq 2(\rho - \rho_0), \\
    \label{Hyp-inexact.3}
P_i & \leq 7 \bar U + 7C_0 - 6 \bar L + 13\| \lambda_0 \|^2 + \frac{16M_c^2}{\rho \sigma^4 } \sum_{t=1}^{i-1} \| \tr_t \|^2 +  \frac{4}{ \rho \sigma^2} \sum_{t=1}^{i-1} \| \tr_{t+1} - \tr_t \|^2.
\end{align}
\end{subequations}
We verify first that \eqref{Hyp-inexact} holds when $i = 1$. From
\eqref{eq:decr} we have
\begin{align} \label{ineq: nn1}
\notag
f(x_1) + \lambda_0^T c(x_1) & + \frac{\rho}{2} \| c(x_1) \|^2 + \frac{\beta}{2} \| x_1 - x_0 \|^2 \\
& \le
f(x_0) + \lambda_0^T c(x_0) + \frac{\rho}{2} \| c(x_0) \|^2 \overset{ \eqref{ineq: AL0.bd.} }{\le} \bar U + \frac{\| \lambda_0 \|^2}{2\rho} + C_0, 
\end{align}
so that for $i = 0$ and $1$, we have
\begin{align*}
f(x_i) + \frac{\rho}{6} \| c(x_i) \|^2 & \overset{\eqref{ineq: AL0.bd.}, \eqref{ineq: nn1}}{\le} \bar U + \frac{\| \lambda_0 \|^2}{2\rho} + C_0 - \lambda_0^T c(x_i) - \frac{\rho}{3} \| c(x_i) \|^2 \\
& = \bar U + \frac{\| \lambda_0 \|^2}{2\rho} + C_0 - \frac{\rho}{3} \left\| c(x_i) + \frac{3\lambda_0}{2\rho} \right\|^2 + \frac{ 3 \| \lambda_0 \|^2}{4\rho} \\
\overset{ (\rho \geq 3 \rho_0) }{\implies} f(x_i) + \frac{\rho_0}{2} \| c(x_i) \|^2 & \le \bar U + \frac{5 \| \lambda_0 \|^2}{4\rho} + C_0 \\
& \hspace{-1in} \overset{ ( \eqref{ineq: nn0} , \rho \geq 1) }{\leq} \bar U + C_0 + \frac{5}{4} \| \lambda_0 \|^2 + 6 \left( \bar U + C_0 - \bar L + \frac{5}{4} \| \lambda_0 \|^2  \right) \\
& \hspace{-.8in} \le 7 \bar U + 7 C_0 - 6 \bar L + \frac{35}{4} \| \lambda_0 \|^2 \overset{\eqref{eq:alpha_l0-inexact}}{<} \hat{\alpha}.
\end{align*}
 Thus, $x_0, x_1 \in S_{\hat \alpha}^0$, verifying that \eqref{Hyp-inexact.1}
 holds for $i=1$.

 Approximate first-order optimality \eqref{eq:1oi} indicates that
\begin{align*}
\nabla f(x_1) + \nabla c(x_1) \lambda_1 + \beta( x_1 - x_0) = \tr_1.
\end{align*}
Since $x_0, x_1 \in S_{\hat \alpha}^0$, we have by Assumption~\ref{Ass: weak} and \eqref{def: DS} that
\begin{align*}
\sigma \| \lambda_1 \| & \leq \| \nabla c(x_1) \lambda_1 \| = \| \nabla f(x_1) + \beta (x_1 - x_0) - \tr_1 \| \leq M_f + \beta D_S + 1.\\ 
\implies  \; \| \lambda_1 \|^2 & \leq \frac{(M_f + \beta D_S + 1 )^2}{\sigma^2} \overset{\eqref{ineq: rho.bd.}}{\leq} 2(\rho - \rho_0).
\end{align*}
Thus, \eqref{Hyp-inexact.2} holds for $i=1$.

Next, we verify \eqref{Hyp-inexact.3} when $i=1$. Note that
\begin{align}
\notag
P_1 & = \Lscr_\rho(x_1, \lambda_1) + \frac{\beta}{4} \| x_1 - x_0 \|^2 \\
\notag
& = \Lscr_\rho(x_1, \lambda_1) - \Lscr_\rho(x_1, \lambda_0) + \Lscr_\rho(x_1, \lambda_0) - \Lscr_\rho(x_0, \lambda_0) + \Lscr_\rho(x_0, \lambda_0) \\
\notag
& \quad + \frac{\beta}{4} \| x_1 - x_0 \|^2 \\
\notag
& \overset{\eqref{eq:decr}}{\leq} \frac{1}{\rho} \| \lambda_1 - \lambda_0 \|^2 - \frac{\beta}{2} \| x_1 - x_0 \|^2 + \Lscr_\rho(x_0, \lambda_0) + \frac{\beta}{4} \| x_1 - x_0 \|^2 \\
\notag
& = \rho \| c(x_1) \|^2 -  \frac{\beta}{4}  \| x_1 - x_0 \|^2 + \Lscr_\rho(x_0,\lambda_0) \\
\notag
& \overset{\eqref{ineq: AL0.bd.}}{\le} \rho \| c(x_1) \|^2 + \bar U + \frac{1}{2 \rho} \| \lambda_0 \|^2 + C_0, \\
\label{ineq-1: complexity-exact case}
& \overset{ (\rho \ge 1) }{\le} \rho \| c(x_1) \|^2 + \bar U + \frac12 \| \lambda_0 \|^2 + C_0,
\end{align}
In addition, \eqref{ineq: nn1} indicates that 
\begin{align}
\notag
& \frac{\rho}{6} \| c(x_1) \|^2 \\
\notag
& \leq \bar U + \frac{1}{2\rho} \| \lambda_0 \|^2  + C_0 - \lambda_0^T c(x_1) - \frac{\rho}{6} \| c(x_1) \|^2 - f(x_1) - \frac{\rho}{6} \| c(x_1) \|^2 \\
\notag
& = \bar U + \frac{1}{2\rho} \| \lambda_0 \|^2 + C_0  - \frac{\rho}{6} \| c(x_1) + 3 \lambda_0 / \rho \|^2 + \frac{3 \| \lambda_0 \|^2}{ 2 \rho }  - f(x_1) - \frac{\rho}{6} \| c(x_1) \|^2 \\
\notag
& \overset{ (\rho \geq 3 \rho_0) }{\leq} \bar U + \frac{1}{2\rho} \| \lambda_0 \|^2 + C_0 + \frac{3 \| \lambda_0 \|^2 }{2 \rho} - f(x_1) - \frac{\rho_0}{2} \| c(x_1) \|^2 \\
\notag
& \leq \bar U + \frac{2}{\rho} \| \lambda_0 \|^2 + C_0 - \bar L \overset{ ( \rho \ge 1 ) }{\leq} \bar U + 2 \| \lambda_0 \|^2 + C_0 - \bar L.
\end{align}
By substituting this bound into \eqref{ineq-1: complexity-exact
  case}, we have that
\begin{equation}\label{Bound on P1}
P_1 \le \bar U + \frac{\| \lambda_0 \|^2}{2} + C_0 + \rho \|c(x_1) \|^2 \le 7 \bar U + 7C_0 - 6 \bar L + 13 \| \lambda_0 \|^2,
\end{equation}
so \eqref{Hyp-inexact.3} holds for $i=1$ also.

We now take the inductive step, supposing that \eqref{Hyp-inexact}
holds when $i = k \geq 1$, and proving that these three conditions
continue to hold for $i = k+1$.  By inequality \eqref{eq:decr}, we
have
\begin{align*}
& \quad \sL_\rho(x_{k+1},\lambda_k) \le \sL_\rho(x_k,\lambda_k) \le P_k  \\
& \implies f(x_{k+1}) + \frac{\rho}{2} \| c(x_{k+1}) \|^2 + \lambda_k^T c(x_{k+1}) \leq P_k \\
& \implies f(x_{k+1}) + \frac{\rho}{2} \| c(x_{k+1}) \|^2 - \frac{ \| \lambda_k \|^2 }{ 2(\rho - \rho_0) } - \frac{ ( \rho - \rho_0 ) \| c(x_{k+1}) \|^2}{2} \leq P_k \\
& \implies f(x_{k+1}) + \frac{\rho_0}{2} \| c(x_{k+1}) \|^2 \leq P_k + \frac{ \| \lambda_k \|^2 }{2(\rho - \rho_0)} \overset{ \eqref{Hyp-inexact.2} }{\leq} P_k + 1 \\
& \quad \overset{\eqref{Hyp-inexact.3}}{\le} 
 7 \bar U + 7C_0 - 6 \bar L + 13 \| \lambda_0 \|^2 + \frac{16M_c^2}{\rho \sigma^4 } \sum_{t=1}^{k-1} \| \tr_t \|^2 +  \frac{4}{ \rho \sigma^2} \sum_{t=1}^{k-1} \| \tr_{t+1} - \tr_t \|^2 + 1 \\
  & \quad \le 7 \bar U + 7C_0 - 6 \bar L + 13 \| \lambda_0 \|^2 + \frac{16M_c^2}{\rho \sigma^4 } \sum_{t=1}^{k-1} \| \tr_t \|^2 +  \frac{8}{ \rho \sigma^2} \sum_{t=1}^{k-1} ( \| \tr_{t+1} \|^2 + \| \tr_t \|^2 ) +1 \\
& \quad \le 7 \bar U + 7C_0 - 6 \bar L + 13 \| \lambda_0 \|^2 + \frac{16M_c^2}{\rho \sigma^4 } \sum_{t=1}^{\infty} \| \tr_t \|^2 +  \frac{16}{ \rho \sigma^2} \sum_{t=1}^{\infty} \| \tr_t \|^2  + 1 \\
& \quad \le 7 \bar U + 7C_0 - 6 \bar L + 13 \| \lambda_0 \|^2 + \frac{ 16 (M_c^2 + \sigma^2 ) R }{\rho \sigma^4 } + 1 \\
& \quad \overset{ \eqref{ineq: rho.bd.} }{\le} 7 \bar U + 7C_0 - 6 \bar L + 13 \| \lambda_0 \|^2 + 2 = \hat{\alpha},
\end{align*}
where the inequality on the third line holds because of $ -\frac{r}{2} \|
a \|^2 - \frac{1}{2r} \| b \|^2 \leq a^Tb$, for any $r > 0$, $a,b \in
\bR^m$. Therefore, $ x_{k+1} \in S_{\hat \alpha}^0 $, so we have
proved \eqref{Hyp-inexact.1}.

By approximate first-order optimality \eqref{eq:1oi} and
the hypothesis $x_k \in S_{\hat \alpha}^0$, the argument to establish
that $\| \lambda_{k+1} \|^2 \leq \frac{ (M_f + \beta D_S + 1 )^2
}{ \sigma^2 } \leq 2 (\rho - \rho_0)$ is the same as for the case of $
i = 1 $, so \eqref{Hyp-inexact.2} holds for $i=k+1$.

Since $x_k$ and $x_{k+1}$ both belong to $S_{\hat \alpha}^0$,
Lemma~\ref{lm: bound for mul. diff.-inexactsubproblem} indicates that
\eqref{ineq: bound for mul. diff.-inexactsubproblem} holds. By
  combining \eqref{ineq: bound for mul. diff.-inexactsubproblem} with
  \eqref{ineq-1: Decr. of Ly. func.}, we obtain \eqref{ineq: Decr. of
    Ly. func.-inexact}. Therefore,
\begin{align*}
P_{k+1} & \overset{ \eqref{ineq: Decr. of Ly. func.-inexact} }{\le} P_k + \frac{16 M_c^2}{ \rho \sigma^4} \| \tr_k \|^2 + 
 \frac{4}{ \rho \sigma^2} \| \tr_{k+1} - \tr_k \|^2 \\
& \overset{ \eqref{Hyp-inexact} }{\le} 7 \bar U + 7C_0 - 6 \bar L + 13 \| \lambda_0 \|^2 + \frac{16M_c^2}{\rho \sigma^4 } \sum_{t=1}^{k} \| \tr_t \|^2 +  \frac{4}{ \rho \sigma^2} \sum_{t=1}^{k} \| \tr_{t+1} - \tr_t \|^2.
\end{align*}
Thus we have established \eqref{Hyp-inexact.3} for $i = k+1$. Note
that \eqref{ineq: bound for mul. diff.-inexactsubproblem} and
\eqref{ineq: Decr. of Ly. func.-inexact} hold for all $k \geq 1$, so
we have completed the proof. \qed
\end{proof}

\paragraph{First-order  complexity.}
With the properties of $\{P_k\}_{k \geq 1}$ established to this point,
we can analyze the complexity of obtaining an $\epsilon$-1o
solution. For any given $\epsilon > 0$, we define
  two quantities which will be referred to repeatedly in subsequent
  sections:
  \begin{subequations}
    \label{def:bothTeps}
\begin{align}
\label{def: Teps}
T_\epsilon & \triangleq \inf\{ t \ge 1 \mid \| \nabla_x \Lscr_0(x_t, \lambda_t) \| \le \epsilon, \| c(x_t) \| \le \epsilon \}. \\
\label{def: hTeps}
\hat{T}_\epsilon  & \triangleq \inf\{ t \ge 1 \mid \mbox{$x_t$ is an $\epsilon$-1o solution of \eqref{eqcons-opt}} \}.
\end{align}
\end{subequations}
Note that $\hat{T}_\epsilon$ is independent of the Proximal AL
method. Meanwhile, by the definition of $\Lscr_0(x, \lambda)$, we know
that $x_{T_\epsilon}$ is an $\epsilon$-1o solution and
$\lambda_{T_\epsilon}$ is the associated multiplier, indicating that
$\hat{T}_\epsilon \le T_\epsilon$.  The definition of $T_\epsilon$
also suggests the following stopping criterion for Algorithm~\ref{Alg:
  Prox-PDA}:
\begin{equation} \label{stopcrit}
\mbox{If $\| \nabla_x \Lscr_0(x_t, \lambda_t) \| \le \epsilon$ and  $\| c(x_t) \| \le \epsilon$ then STOP.}
\end{equation}
Under this criterion, Algorithm~\ref{Alg: Prox-PDA} will stop at
iteration \Rev{$T_\epsilon - 1$} and output $x_{T_\epsilon}$ as an $\epsilon$-1o solution.

Part (i) of the following result shows subsequential convergence of
the generated sequence to the first-order optimal point. Part (ii)
describes the speed of such convergence by obtaining an estimate of
$T_{\epsilon}$ in terms of $\epsilon$. In this result, we make a specific choice $\beta = \epsilon^{\eta}/2$ for the proximality
  parameter. We could choose $\beta$ to be any fixed multiple of this
  value (the multiple not depending on $\epsilon$) and obtain a
  similar result with only trivial changes to the analysis.
\begin{theorem}[First-order complexity] \label{thm: first-order complexity - inexactsubproblem} 
\Rev{
Consider Algorithm~\ref{Alg: Prox-PDA} with conditions \eqref{eq:1oi}
and \eqref{eq:decr}, and let $\{P_k\}_{k \geq 1}$ be defined as in
\eqref{Def: P-func}. Suppose that Assumption~\ref{Ass: Compact
    sublevel set}, \ref{Ass: f.ub} and Assumption~\ref{Ass: weak} hold with $\sS =
  S_{\hat \alpha}^0$ (with $\hat \alpha$ defined in
  \eqref{eq:alpha_l0-inexact}), and that $\epsilon \in (0,1]$ and
  $\eta \in [0,2]$ are given. Suppose that the residual sequence $\{
  \tr_k \}_{k \geq 1}$ is chosen such that $ \sum_{k=1}^\infty \|
  \tr_k \|^2 \leq R \in [1, \infty) $ and $\| \tr_k \| \leq
    \epsilon/2$ for all $k \ge 1$.
    Define $\beta = \epsilon^\eta/2$  and 
\begin{equation} \label{Parameter Ass.-inexact}
\rho \ge \max \left\{ \frac{16 \max\{ C_1, C_2 \}}{ \epsilon^\eta }, \frac{(M_f + \beta D_S + 1)^2}{2 \sigma^2} + \rho_0, \frac{16(M_c^2 + \sigma^2) R }{ \sigma^4 }, 3 \rho_0, 1 \right\},
\end{equation}
where $C_1$ and $C_2$ are defined as in \eqref{eq:C1C2}, and $D_S$ is
the diameter of $S_{\hat \alpha}^0$, as defined in \eqref{def:
  DS}. Suppose that $x_0$ satisfies $ \| c(x_0) \|^2 \le \min\{
  C_0 /\rho , 1 \} $, where $C_0$ is the constant appearing in
  \eqref{eq:alpha_l0-inexact}. Then we have the following.
  \begin{itemize}
    \item[(i)] A subsequence of $\{ ( x_k,
  \lambda_k ) \}_{k \ge 1}$ generated by Algorithm~\ref{Alg: Prox-PDA}
  converges to a point $(x^*, \lambda^*)$ satisfying first-order
  optimality conditions for \eqref{eqcons-opt}, namely,
\[
\nabla f(x^*) + \nabla c(x^*) \lambda^* =0, \quad c(x^*) = 0.
\]
\item[(ii)] For $T_\epsilon$ and $\hat{T}_\epsilon$ defined in
\eqref{def:bothTeps}, we have $\hat{T}_\epsilon \le T_\epsilon = \sO(
1/\epsilon^{2-\eta} )$. In particular, if $\eta = 2$, then
$\hat{T}_{\epsilon} = \sO(1)$.
\end{itemize}}
\end{theorem}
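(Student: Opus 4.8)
\emph{Setting up.} The plan begins by checking that the hypotheses of Lemma~\ref{lm: xk.bd.} hold under the present choices $\beta=\epsilon^\eta/2$ and \eqref{Parameter Ass.-inexact}. The only point needing a word is positivity of $c_1,c_2$ from \eqref{def-c1c2-inexact}: since $\rho\ge 16\max\{C_1,C_2\}/\epsilon^\eta$ we get $C_1/\rho,\,C_2/\rho\le \epsilon^\eta/16=\beta/8$, so $c_1,c_2\ge \beta/8=\epsilon^\eta/16>0$; the remaining requirements of Lemma~\ref{lm: xk.bd.} are exactly the other entries of the max in \eqref{Parameter Ass.-inexact} together with $\|\tr_k\|\le\epsilon/2\le1$ and $\|c(x_0)\|^2\le\min\{C_0/\rho,1\}$. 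Lemma~\ref{lm: xk.bd.} then supplies $\{x_k\}\subseteq S_{\hat\alpha}^0$, the bound $\|\lambda_k\|^2\le 2(\rho-\rho_0)$, and the one-step estimate \eqref{ineq: Decr. of Ly. func.-inexact} for all $k\ge1$. I would then telescope \eqref{ineq: Decr. of Ly. func.-inexact}: using the lower bound $P_k\ge \Lscr_\rho(x_k,\lambda_k)\ge f(x_k)+\tfrac{\rho_0}{2}\|c(x_k)\|^2-\tfrac{\|\lambda_k\|^2}{2(\rho-\rho_0)}\ge \bar L-1$ (Young's inequality on the cross term, then $\|\lambda_k\|^2\le2(\rho-\rho_0)$ and \eqref{eq:defL}), the upper bound \eqref{Bound on P1} on $P_1$, and $\sum_{k\ge1}\big(\tfrac{16M_c^2}{\rho\sigma^4}\|\tr_k\|^2+\tfrac{4}{\rho\sigma^2}\|\tr_{k+1}-\tr_k\|^2\big)\le\tfrac{16(M_c^2+\sigma^2)R}{\rho\sigma^4}\le1$, one obtains $c_1\sum_{k\ge2}\|\Delta x_k\|^2\le\Gamma$ for an $\epsilon$-independent constant $\Gamma$. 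Since $c_1\ge\epsilon^\eta/16$ and $\|\Delta x_1\|\le D_S$,
\[
\sum_{k\ge1}\|\Delta x_k\|^2\;\le\; D_S^2+\frac{16\Gamma}{\epsilon^\eta}\;=\;\sO(1/\epsilon^\eta).
\]
Feeding this into Lemma~\ref{lm: bound for mul. diff.-inexactsubproblem} and summing (with $\sum\|\tr_{k+1}-\tr_k\|^2\le4R$) yields $\sum_{k\ge1}\|\Delta\lambda_{k+1}\|^2=\sO(1/\epsilon^\eta)$, hence $\sum_k\|c(x_{k+1})\|^2=\rho^{-2}\sum_k\|\Delta\lambda_{k+1}\|^2<\infty$.

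\emph{Part (i).} These two summability statements force $\|\Delta x_k\|\to0$ and $\|\Delta\lambda_k\|\to0$, so $c(x_{k+1})=\Delta\lambda_{k+1}/\rho\to0$; and $\sum\|\tr_k\|^2<\infty$ forces $\tr_k\to0$, so \eqref{equality2-bound for multiplier (inexact)} gives $\nabla f(x_k)+\nabla c(x_k)\lambda_k=\tr_k-\beta\Delta x_k\to0$. I would extract a subsequence of the bounded sequence $\{(x_k,\lambda_k)\}$ converging to some $(x^*,\lambda^*)$ and pass to the limit using continuity of $\nabla f$, $\nabla c$, $c$, obtaining $\nabla f(x^*)+\nabla c(x^*)\lambda^*=0$ and $c(x^*)=0$.

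\emph{Part (ii).} Here the plan is a pigeonhole argument on the stopping test \eqref{stopcrit}. Suppose it fails at every $t\in\{2,\dots,T\}$. For each such $t$, either $\|\nabla_x\Lscr_0(x_t,\lambda_t)\|>\epsilon$, in which case $\nabla_x\Lscr_0(x_t,\lambda_t)=\tr_t-\beta\Delta x_t$ and $\|\tr_t\|\le\epsilon/2$ give $\beta\|\Delta x_t\|>\epsilon/2$, i.e. $\|\Delta x_t\|^2>\epsilon^2/(4\beta^2)=\epsilon^{2-2\eta}$; or $\|c(x_t)\|>\epsilon$, in which case $\|\Delta\lambda_t\|^2=\rho^2\|c(x_t)\|^2>\rho^2\epsilon^2$, while the residual term in Lemma~\ref{lm: bound for mul. diff.-inexactsubproblem} is at most $(\tfrac{4M_c^2}{\sigma^4}+\tfrac4{\sigma^2})\epsilon^2\le\tfrac14\rho^2\epsilon^2$ by \eqref{Parameter Ass.-inexact}, so $\max\{C_1,C_2\}(\|\Delta x_t\|^2+\|\Delta x_{t-1}\|^2)>\tfrac34\rho^2\epsilon^2\ge192\max\{C_1,C_2\}^2\epsilon^{2-2\eta}$, whence $\|\Delta x_t\|^2+\|\Delta x_{t-1}\|^2>192\max\{C_1,C_2\}\epsilon^{2-2\eta}$. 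Since $\max\{C_1,C_2\}\ge C_1$ is bounded below by a positive $\epsilon$-independent constant, in both cases $\|\Delta x_t\|^2+\|\Delta x_{t-1}\|^2>c_*\epsilon^{2-2\eta}$ for some fixed $c_*>0$. Summing over $t=2,\dots,T$ and using the display above,
\[
(T-1)\,c_*\,\epsilon^{2-2\eta}\;<\;2\sum_{k\ge1}\|\Delta x_k\|^2\;\le\;2\Big(D_S^2+\frac{16\Gamma}{\epsilon^\eta}\Big),
\]
so, using $\epsilon^{2\eta-2}\le\epsilon^{\eta-2}$ for $\eta\ge0$ and $\epsilon\le1$, $T-1=\sO(1/\epsilon^{2-\eta})$; hence $T_\epsilon=\sO(1/\epsilon^{2-\eta})$, and $\hat T_\epsilon\le T_\epsilon$ because $(x_{T_\epsilon},\lambda_{T_\epsilon})$ satisfies Definition~\ref{def:1o}. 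For $\eta=2$ this reads $\hat T_\epsilon=\sO(1)$.

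\emph{Main obstacle.} The delicate part is the exponent bookkeeping in part (ii): the proof must simultaneously exploit $\beta=\Theta(\epsilon^\eta)$ (to turn a violated gradient test into $\|\Delta x_t\|^2>\epsilon^{2-2\eta}$, and to get $c_1=\Theta(\epsilon^\eta)$ in the telescoping) and $\rho=\Omega(\epsilon^{-\eta})$ (to turn a violated feasibility test into a lower bound of order $\epsilon^{2-2\eta}$ on $\|\Delta x_t\|^2+\|\Delta x_{t-1}\|^2$ while keeping the Lemma~\ref{lm: bound for mul. diff.-inexactsubproblem} residual negligible), and then reconcile that per-iteration lower bound with the telescoped upper bound $\sum\|\Delta x_k\|^2=\sO(\epsilon^{-\eta})$ so that the count lands exactly on $\sO(\epsilon^{-(2-\eta)})$ over the whole range $\eta\in[0,2]$. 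One must also track that $\Gamma$, $D_S$, $c_*$, and the lower bound on $\max\{C_1,C_2\}$ are genuinely independent of $\epsilon$, which is what makes the $\sO$-statements meaningful.
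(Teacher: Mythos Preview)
Your proof is correct and rests on the same two pillars as the paper's: telescoping the Lyapunov decrease \eqref{ineq: Decr. of Ly. func.-inexact} against the uniform lower bound $P_k\ge\bar L-1$, and translating constraint violation into step size via Lemma~\ref{lm: bound for mul. diff.-inexactsubproblem}. Part (i) is essentially identical to the paper's.

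For part (ii) you take a different but equivalent route. The paper argues \emph{forward}: it fixes $K=\lceil\hat\Delta\epsilon^{\eta-2}\rceil$, uses the telescoped bound to find some $k^*\le K$ with $c_1\|\Delta x_{k^*+1}\|^2+c_2\|\Delta x_{k^*}\|^2\le(\hat\alpha-\bar L)/K$, and then verifies directly that both tests in \eqref{stopcrit} pass at $k^*+1$, giving $T_\epsilon\le K+1$ with an explicit constant $\hat\Delta$. You argue \emph{by contradiction/counting}: each index at which \eqref{stopcrit} fails forces $\|\Delta x_t\|^2+\|\Delta x_{t-1}\|^2>c_*\epsilon^{2-2\eta}$, and the summability $\sum_k\|\Delta x_k\|^2=\sO(\epsilon^{-\eta})$ caps the number of such indices at $\sO(\epsilon^{\eta-2})$. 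The paper's version yields a cleaner explicit constant; yours is arguably more transparent about why the exponent is $2-\eta$ (it is the difference between the per-index lower bound $\epsilon^{2-2\eta}$ and the total budget $\epsilon^{-\eta}$). Both are standard pigeonhole variants, and your bookkeeping --- in particular that $\Gamma$, $D_S$, $c_*$, and the lower bound $C_1\ge C_1^o$ are $\epsilon$-independent, and the check that the residual term in Lemma~\ref{lm: bound for mul. diff.-inexactsubproblem} is dominated by $\tfrac14\rho^2\epsilon^2$ via $\rho\ge 16(M_c^2+\sigma^2)R/\sigma^4\ge1$ --- is sound.
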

\begin{proof}
We first prove (i). Checking the positivity of $c_1$ and
$c_2$, given the parameter assignments, we have
\begin{equation} \label{eq:c1c2.bd}
 c_1 = \frac{\beta}{4} - \frac{C_1}{\rho} \overset{\eqref{Parameter Ass.-inexact}}{\geq} \frac{\epsilon^\eta}{8} - \frac{ \epsilon^\eta}{16}  > 0, \quad c_2 = \frac{\beta}{4} - \frac{C_2}{\rho} \overset{\eqref{Parameter Ass.-inexact}}{\geq} \frac{ \epsilon^\eta}{16} > 0.
\end{equation}
\begin{align}
\notag
P_k & \ge f(x_k) + \frac{\rho}{2} \| c(x_k) \|^2 + \lambda_k^T c(x_k) \\
\notag
& \ge f(x_k) + \frac{\rho}{2} \| c(x_k) \|^2 - \frac{ \| \lambda_k \|^2 }{ 2(\rho - \rho_0) } - \frac{ (\rho-\rho_0) \| c(x_k) \|^2 }{2} \\
\notag
& = f(x_k) + \frac{\rho_0}{2} \| c(x_k) \|^2 - \frac{ \| \lambda_k \|^2 }{ 2(\rho - \rho_0) } \\
\notag
& \overset{ (\textrm{Lemma}~\ref{lm: xk.bd.}) }{\ge} f(x_k) + \frac{\rho_0}{2} \| c(x_k) \|^2 - \frac{ (M_f + \beta D_S + 1)^2 }{ 2 \sigma^2 (\rho - \rho_0) } \\
\label{ineq: Pk.low.bd.}
& \overset{ \eqref{eq:defL}, \eqref{Parameter Ass.-inexact} }{\ge} \bar L - 1.
\end{align}
Therefore, using \eqref{ineq: Decr. of
  Ly. func.-inexact} from Lemma~\ref{lm: xk.bd.}, we have the following for any  $k \geq 1$:
\begin{align}
\notag
& \sum_{i=1}^k \left[ c_1 \| \Delta x_{i+1} \|^2 + c_2 \| \Delta x_i \|^2 \right] \\
\notag
& \leq P_1 - P_{k+1} + \frac{16M_c^2}{\rho \sigma^4 } \sum_{i=1}^k \| \tr_i \|^2 + \frac{4}{ \rho \sigma^2} \sum_{i=1}^k \| \tr_{i+1} -\tr_i \|^2 \\
\notag
& \leq P_1 - P_{k+1} + \frac{16M_c^2}{\rho \sigma^4 } \sum_{i=1}^k \| \tr_i \|^2 + \frac{8}{ \rho \sigma^2} \sum_{i=1}^k  (\| \tr_{i+1}\|^2 + \|\tr_i \|^2)\\
\notag
& \leq P_1 - P_{k+1} + \frac{16 (M_c^2+ \sigma^2) }{\rho \sigma^4 } \sum_{i=1}^\infty \| \tr_i \|^2 \\
\label{ineq: nn7}
& \leq P_1 - P_{k+1} + \frac{16 (M_c^2 + \sigma^2) R }{\rho \sigma^4 } \\
\notag
& \overset{ \eqref{ineq: Pk.low.bd.} }{\leq} P_1 - \left( \bar L - 1 \right) +  \frac{16 (M_c^2 + \sigma^2) R }{\rho \sigma^4 } \\
\notag
& \overset{\eqref{Parameter Ass.-inexact}}{\leq} P_1 - \bar L + 2 \\
\label{ineq2: summability}
& \overset{ \eqref{Bound on P1} }{\le} 7 \bar U + 7C_0  - 6 \bar L + 13 \| \lambda_0 \|^2 - \bar L + 2 = \hat \alpha - \bar L.
\end{align}
 Because of \eqref{eq:sj7} in Lemma~\ref{lm: xk.bd.}
and compactness of $S_{\hat \alpha}^0$, the sequence $\{ (x_k,
\lambda_k) \}_{k \geq 1}$ is bounded, so there exists a convergent
subsequence $\{ (x_k, \lambda_k) \}_{k \in \sK}$ with limit
$(x^*,\lambda^*)$.  Since \eqref{ineq2: summability} holds for any $k
\ge 1$ and $c_1 > 0$, $c_2 > 0$, we have that
$\lim\limits_{k \rightarrow \infty} \| \Delta x_k \| = 0$. Moreover,
finiteness of $\sum_{k=1}^{\infty} \| \tr_k \|^2$ implies that
$\lim\limits_{k \rightarrow \infty} \| \tr_k \| = 0$. Therefore, we have
\begin{align*}
& \nabla f(x^*)
+ \nabla c(x^*) \lambda^* = \lim\limits_{k \in \sK} ( \nabla f(x_k) +
\nabla c(x_k) \lambda_k ) \\
& = \lim\limits_{k \in \sK} ( \nabla f(x_k) +
\nabla c(x_k) ( \lambda_{k-1} + \rho c(x_k)  ) ) = \lim\limits_{k \in \sK}  \nabla_x \Lscr_{\rho}( x_k, \lambda_{k-1}) \\
& \overset{\eqref{eq:1oi}}{=} \lim\limits_{k \in
  \sK} ( - \beta \Delta x_k + \tr_k ) = 0.
\end{align*}
Since \eqref{ineq: bound for mul. diff.-inexactsubproblem} holds for any $k \ge 1$ by Lemma~\ref{lm: xk.bd.}, we have 
\begin{align*}
& \| c(x^*) \|^2 =  \lim\limits_{k \in \sK} \, \|c(x_k) \|^2 = \lim\limits_{k \in \sK} \, \|
\lambda_k - \lambda_{k-1} \|^2/ \rho^2 \\
& \overset{\eqref{ineq: bound for mul. diff.-inexactsubproblem}}{\le } \lim_{k \in \sK} \frac{C_1}{\rho^2} \| \Delta x_k \|^2 +
    \frac{C_2}{\rho^2} \| \Delta x_{k-1} \|^2 
 + \frac{16 M_c^2 }{\rho^2 \sigma^4} \| \tr_{k-1} \|^2 + \frac{4}{ \rho^2 \sigma^2} \| \tr_k - \tr_{k-1} \|^2 =0, 
\end{align*}
completing the proof of (i).

We now prove (ii). Define
\begin{subequations}
\begin{align}  
\label{eq:C1o} 
C_1^o & \triangleq \frac{4}{\sigma^2} \left( L_f + \frac{L_c M_f}{\sigma} \right)^2 \le C_1, \\
\label{def: hatDta}
\hat \Delta & \triangleq ( \hat \alpha - \bar L ) \max \left\{ 16, 1/(8 C_1^o)  \right\}.
\end{align}
\end{subequations}
We want to show that $T_\epsilon \leq \lceil \hat{\Delta}/ \epsilon^{2 -
  \eta} \rceil + 1$.  
Let $K \triangleq \lceil \hat \Delta / \epsilon^{2 - \eta} \rceil$, and
note that since \eqref{ineq2: summability} holds for $k=K$, we have that
there exists some  $k^* \in \{1,2,\dotsc,K\}$ such that
\begin{align} \label{ineq-1: complexity-inexact}
c_1 \| x_{k^*+1} - x_{k^*} \|^2 + c_2 \| x_{k^*} - x_{k^*-1} \|^2 \leq (\hat \alpha - \bar L) /K.
\end{align}
Thus, we have
\begin{align*}
& \| \nabla \Lscr_0(x_{k^*+1}, \lambda_{k^*+1}) \| = \| \nabla \Lscr_{\rho}(x_{k^*+1}, \lambda_{k^*} ) \| \overset{\eqref{eq:1oi}}{\leq} \beta \| x_{k^*+1} - x_{k^*} \| + \| \tr_{k^*+1} \| \\
& \overset{\eqref{ineq-1: complexity-inexact}}{\leq} \beta \sqrt{ \frac{ (\hat \alpha - \bar L) /c_1 }{K} } + \frac{\epsilon}{2} \overset{\eqref{eq:c1c2.bd}}{\leq} \frac{ \epsilon^{\eta} }{2} \sqrt{ \frac{ (\hat \alpha - \bar L) / (  \epsilon^{\eta} / 16) }{K} } + \frac{\epsilon}{2} \\
& \leq \frac{  \epsilon^\eta}{2} \sqrt{ \frac{ 16 (\hat \alpha - \bar L) / ( \epsilon^\eta ) }{ \hat \Delta \epsilon^{\eta - 2} } } + \frac{\epsilon}{2} \overset{\eqref{def: hatDta}}{\leq} \frac{ \epsilon^\eta}{2} \sqrt{ \frac{ 16 (\hat \alpha - \bar L) }{ 16 (\hat \alpha - \bar L) \epsilon^{2\eta - 2}  } } + \frac{ \epsilon}{2} = \epsilon.
\end{align*}
For the constraint norm, we have
\begin{align*}
& \| c(x_{k^*+1}) \|^2 = \| \Delta \lambda_{k^*+1}  \|^2 / \rho^2 \\
& \overset{\eqref{ineq: bound for mul. diff.-inexactsubproblem}}{\leq} \frac{C_1}{\rho^2} \| \Delta x_{k^*+1}  \|^2 + \frac{C_2}{\rho^2} \| \Delta x_{k^*} \|^2 
 + \frac{16 M_c^2}{\rho^2 \sigma^4} \| \tr_{k^*} \|^2 + \frac{4}{\rho^2 \sigma^2} \| \tr_{k^*+1} - \tr_{k^*} \|^2 \\
& \leq \frac{C_1}{\rho^2} \| \Delta x_{k^*+1} \|^2 + \frac{C_2}{\rho^2} \| \Delta x_{k^*}  \|^2  + \frac{16 M_c^2}{\rho^2 \sigma^4} \| \tr_{k^*} \|^2 + \frac{8}{\rho^2 \sigma^2} (\| \tr_{k^*} \|^2 + \| \tr_{k^*+1} \|^2) \\
& \leq \frac{C_1}{\rho^2} \| \Delta x_{k^*+1}  \|^2 + \frac{C_2}{\rho^2} \| \Delta x_{k^*}  \|^2 + \frac{16 (M_c^2 + \sigma^2) }{\rho^2 \sigma^4 } \cdot \frac{\epsilon^2}{4}  \\
& \leq \frac{1}{\rho^2}  \max\left\{ \frac{C_1}{c_1}, \frac{C_2}{c_2}  \right\} ( c_1 \| \Delta x_{k^*+1}  \|^2 + c_2 \| \Delta x_{k^*} \|^2 )  +  \frac{ 4 (M_c^2 + \sigma^2) \epsilon^2 }{\rho^2 \sigma^4 } \\
& \overset{ \eqref{Parameter Ass.-inexact}, \eqref{ineq-1: complexity-inexact} }{\leq} \frac{ \max\{ C_1, C_2 \}/( \epsilon^{\eta}/16) }{ (16 \max\{ C_1, C_2 \}/ \epsilon^{\eta} ) )^2 } \cdot \frac{ \hat \alpha - \bar L }{K} +  \frac{ 4 (M_c^2 + \sigma^2) \epsilon^2 }{\rho^2 \sigma^4 } \\
 & \leq \frac{ (\hat \alpha - \bar L) \epsilon^\eta}{ 16 \max\{ C_1, C_2 \} K } + \frac{ 4 (M_c^2 + \sigma^2) }{\rho^2 \sigma^4 } \cdot \epsilon^2 \\
 & \overset{\eqref{eq:C1o}}{\le} \frac{ (\hat \alpha - \bar L) \epsilon^\eta }{ 16 C_1^o K } + \frac{ 4 (M_c^2 + \sigma^2) }{\rho^2 \sigma^4 } \cdot \epsilon^2 \\
 & \overset{ \eqref{Parameter Ass.-inexact} }{\leq} \frac{  (\hat \alpha - \bar L)  \epsilon^\eta }{ 16 C_1^o \hat \Delta \epsilon^{\eta - 2} } + \frac{\epsilon^2}{ 4 \rho R } \\
 & \overset{ (\rho \ge 1, R \ge 1) }{\leq} \frac{  (\hat \alpha - \bar L)  \epsilon^\eta }{ 16 C_1^o \hat \Delta \epsilon^{\eta - 2} } + \frac{\epsilon^2}{4} \\
 & \overset{\eqref{def: hatDta}}{\le} \frac{\epsilon^2}{2} + \frac{\epsilon^2}{4} < \epsilon^2.
\end{align*}
Therefore,  we have
\begin{align} \label{ineq: nn6}
T_\epsilon \leq k^*+1 \leq K + 1 = \lceil \hat \Delta/\epsilon^{ 2 - \eta} \rceil + 1.
\end{align}
It follows that 
$\hat{T}_\epsilon \le T_\epsilon \le \lceil \hat \Delta /\epsilon^{ 2 - \eta} \rceil + 1$,
completing the proof. \qed
\end{proof}

\begin{remark} (i).
  \rev{The condition $ \| c(x_0) \|^2 \le \min\{ C_0 / \rho, 1 \}$ is
    obviously satisfied by a feasible point, for which $c(x_0) =
    0$. In this case, we do not need Assumption~\ref{Ass: f.ub} and
    can prove a result with $ \hat{\alpha} = 7f(x_0) + 7C_0 - 6 \bar L
    + 13 \| \lambda_0 \|^2 + 2 $. An initial phase can be applied, if
    necessary, to find a point with small $\| c(x_0) \|$; we discuss
    this point in a later section.} \\
  (ii). When $\eta = 0$, the
  complexity result is consistent with that of
  \cite{pmlr-v70-hong17a}. \Rev{However, our parameter choices $\beta
    = \epsilon^\eta$ for $\eta>0$ allows us to choose $\beta$ to be
    small because, unlike \cite{pmlr-v70-hong17a}, we are not
    concerned with maintaining  strong convexity of the subproblem in
    Step 1 of Algorithm~\ref{Alg: Prox-PDA}.} Another benefit of small
  $\beta$ is that it allows complexity results to be proved for 
  $\epsilon$-2o points, which follows from part (ii) of Theorem~\ref{thm: first-order complexity - inexactsubproblem}, as we see next.
\end{remark}

\paragraph{Second-order complexity.}
We further assume that in Step 1 of Algorithm~\ref{Alg: Prox-PDA},
$x_{k+1}$ satisfies the following approximate second-order optimality
conditions:
\begin{align}\label{eq:2oi}
\nabla_{xx}^2 \sL_{\rho}(x_{k+1}, \lambda_k ) + \beta I \succeq -\epsilon_{k+1}^H I, \quad \mbox{for all $k \ge 0$,}
\end{align}
where $\{ \epsilon_{k+1}^H \}_{k \geq 0}$ is a chosen error sequence.

In corresponding fashion to the definition of $\hat{T}_\epsilon$ in
\eqref{def: hTeps}, we define $\widetilde{T}_{\epsilon}$  as follows:
\begin{align} \label{def: tTeps}
\widetilde{T}_{\epsilon} \triangleq \inf\{ t \geq 1 \mid x_t \mbox{ is an $\epsilon$-2o solution of \eqref{eqcons-opt}} \}.
\end{align}

We have the following result for complexity of obtaining an
$\epsilon$-2o stationary point of \eqref{eqcons-opt} through
Algorithm~\ref{Alg: Prox-PDA}.
%
\begin{corollary} [Second-order complexity] \label{Corr: 2complexity - inexact}
Suppose that all assumptions and setting in Theorem~\ref{thm:
  first-order complexity - inexactsubproblem} hold. Assume that, in
addition, Step 1 of Algorithm~\ref{Alg: Prox-PDA} satisfies
\eqref{eq:2oi}, with $\epsilon_k^H \equiv \epsilon/2$ for all $k \ge
1$. Let $\eta \in [1,2]$. Then for $\widetilde{T}_{\epsilon}$ defined
in \eqref{def: tTeps}, we have $\widetilde{T}_{\epsilon} =
\sO(1/\epsilon^{2-\eta})$.
\end{corollary}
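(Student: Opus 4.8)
The plan is to reuse Theorem~\ref{thm: first-order complexity - inexactsubproblem} almost verbatim and only supply the extra curvature estimate. Recall from the proof of that theorem that there is an index $k^* \in \{1,\dots,K\}$ with $K = \lceil \hat\Delta/\epsilon^{2-\eta}\rceil$ at which the iterate $x_{k^*+1}$, together with the multiplier $\lambda_{k^*+1}$, already satisfies $\|\nabla f(x_{k^*+1}) + \nabla c(x_{k^*+1})\lambda_{k^*+1}\| \le \epsilon$ and $\|c(x_{k^*+1})\| \le \epsilon$, that is, condition \eqref{ineq1:epsilonKKT2} holds at $x_{k^*+1}$ with multiplier $\lambda_{k^*+1}$. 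It therefore suffices to show that, under \eqref{eq:2oi} with $\epsilon_k^H \equiv \epsilon/2$ and $\eta \in [1,2]$, the very same pair $(x_{k^*+1},\lambda_{k^*+1})$ satisfies the projected curvature inequality \eqref{ineq3:epsilonKKT2}; then $x_{k^*+1}$ is an $\epsilon$-2o solution and $\widetilde T_\epsilon \le k^*+1 \le K+1 = \mathcal{O}(1/\epsilon^{2-\eta})$.

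First I would compute the Hessian of the augmented Lagrangian. Twice differentiating $\mathcal{L}_\rho(x,\lambda) = f(x) + \lambda^Tc(x) + \frac{\rho}{2}\|c(x)\|^2$ in $x$ gives
\[
\nabla_{xx}^2 \mathcal{L}_\rho(x,\lambda) = \nabla^2 f(x) + \sum_{i=1}^m \big(\lambda_i + \rho c_i(x)\big)\nabla^2 c_i(x) + \rho\,\nabla c(x)\nabla c(x)^T .
\]
Evaluating at $(x_{k+1},\lambda_k)$ and invoking the multiplier update $\lambda_{k+1} = \lambda_k + \rho c(x_{k+1})$ of Step~2, each scalar coefficient $\lambda_i + \rho c_i(x_{k+1})$ becomes $(\lambda_{k+1})_i$, so
\[
\nabla_{xx}^2 \mathcal{L}_\rho(x_{k+1},\lambda_k) = H_{k+1} + \rho\,\nabla c(x_{k+1})\nabla c(x_{k+1})^T, \qquad H_{k+1} \triangleq \nabla^2 f(x_{k+1}) + \sum_{i=1}^m (\lambda_{k+1})_i \nabla^2 c_i(x_{k+1}).
\]

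Next I would restrict \eqref{eq:2oi} to the subspace $S(x_{k+1})$. For any $d$ with $\nabla c(x_{k+1})^T d = 0$ one has $d^T\nabla c(x_{k+1})\nabla c(x_{k+1})^T d = \|\nabla c(x_{k+1})^T d\|^2 = 0$, so \eqref{eq:2oi} with $\epsilon_{k+1}^H = \epsilon/2$ reduces on $S(x_{k+1})$ to $d^T H_{k+1} d \ge -(\beta + \epsilon/2)\|d\|^2$. Since $\beta = \epsilon^\eta/2$, $\epsilon \in (0,1]$, and $\eta \ge 1$, we have $\beta = \tfrac12\epsilon^\eta \le \tfrac12\epsilon$, whence $d^T H_{k+1} d \ge -\epsilon\|d\|^2$ for every $d \in S(x_{k+1})$ — precisely \eqref{ineq3:epsilonKKT2}. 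Applying this at the index $k^*+1$ and combining with \eqref{ineq1:epsilonKKT2} already established there shows that $x_{k^*+1}$ is an $\epsilon$-2o solution, and the stated bound on $\widetilde T_\epsilon$ follows.

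There is no serious obstacle: the only points requiring care are (i) using the identity $\lambda_k + \rho c(x_{k+1}) = \lambda_{k+1}$ so that the curvature condition is expressed through the Lagrangian Hessian $H_{k+1}$ with exactly the multiplier $\lambda_{k+1}$ that appears in the first-order conditions (and the vanishing of $\rho\,\nabla c(x_{k+1})\nabla c(x_{k+1})^T$ on $S(x_{k+1})$), and (ii) noting that the requirement $\beta + \epsilon/2 \le \epsilon$, i.e.\ $\epsilon^\eta \le \epsilon$, is exactly what narrows the admissible range to $\eta \in [1,2]$ compared with $\eta \in [0,2]$ in the first-order result.
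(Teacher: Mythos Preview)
Your proposal is correct and follows essentially the same approach as the paper: compute the Hessian of $\mathcal{L}_\rho$, use the multiplier update $\lambda_{k+1}=\lambda_k+\rho c(x_{k+1})$ to identify the Lagrangian Hessian, observe that the $\rho\,\nabla c\nabla c^T$ term vanishes on $S(x_{k+1})$, and use $\beta=\epsilon^\eta/2\le\epsilon/2$ (which forces $\eta\ge 1$) to conclude \eqref{ineq3:epsilonKKT2}. The only cosmetic difference is that the paper notes this curvature bound holds at \emph{every} iterate and therefore deduces $\widetilde T_\epsilon\le T_\epsilon$ directly, whereas you point to the specific index $k^*+1$ from the first-order proof; the content is identical.
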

\begin{proof}
Since $\beta = \epsilon^\eta/2 \leq \epsilon/2$ and $ \epsilon_{k+1}^H
\equiv \epsilon/2$, for any $k \ge 0$, we have from \eqref{eq:2oi} that
\begin{align*}
\nabla_{xx}^2 \Lscr_{\rho}(x_{k+1}, \lambda_k)  \succeq - (\beta + \epsilon_{k+1}^H)I \succeq - \epsilon I.
\end{align*}
This fact indicates that 
\begin{align*}
\nabla^2 f(x_{k+1}) + \sum_{i = 1}^m [\lambda_{k+1}]_i \nabla^2 c_i(x_{k+1}) + \rho \nabla c(x_{k+1}) [ \nabla c(x_{k+1}) ]^T \succeq -\epsilon I,
\end{align*}
which implies that 
\begin{align*}
d^T ( \nabla^2 f(x_{k+1}) + \sum_{i = 1}^m [\lambda_{k+1}]_i \nabla^2 c_i(x_{k+1}) ) d & \geq -\epsilon \| d \|^2,
\end{align*}
for any $d \in S(x_{k+1}) \triangleq \{ d \in \bR^n \mid [\nabla
  c(x_{k+1})]^T d = 0 \}$. This is exactly condition
\eqref{ineq3:epsilonKKT2} of Definition~\ref{Def:
  epsilonKKT2}. Therefore, we have
\begin{align*}
\widetilde{T}_{\epsilon} & = \inf \{ t \geq 1 \mid \exists \lambda \in \bR^{ m }, \| \nabla f(x_t ) + \nabla c(x_t ) \lambda \| \leq \epsilon, \| c(x_t) \| \leq \epsilon, \\
& \hspace{+1in} d^T ( \nabla^2 f(x_t) + \sum_{i = 1}^m \lambda_i \nabla^2 c_i( x_t ) ) d \geq -\epsilon \| d \|^2, \;\; \mbox{for all $d \in S(x_t)$} \} \\
& \leq \inf \{ t \geq 1 \mid \| \nabla f(x_t ) + \nabla c(x_t ) \lambda_t \| \leq \epsilon, \| c(x_t) \| \leq \epsilon, \\
& \hspace{+1in} d^T ( \nabla^2 f(x_t) + \sum_{i = 1}^m [\lambda_t]_i \nabla^2 c_i( x_t ) ) d \geq -\epsilon \| d \|^2, \;\; \mbox{for all $d \in S(x_t)$} \} \\
& = \inf \{ t \geq 1 \mid \| \nabla f(x_t) + \nabla c(x_t) \lambda_t \| \leq \epsilon, \| c(x_t) \| \leq \epsilon \} = T_\epsilon.
\end{align*}
The result now follows from Theorem~\ref{thm: first-order complexity -
  inexactsubproblem}. \qed
\end{proof}

\section{Total iteration/operation complexity of Proximal AL} \label{sec: total.iter.comp.}


In this section, we will choose an appropriate method to solve the
subproblem and estimate the total iteration and operation complexity
of our Proximal AL approach to find an $\epsilon$-1o or $\epsilon$-2o
solution. \rev{To solve the subproblem at each major iteration of
  Algorithm~\ref{Alg: Prox-PDA}, we can use methods for unconstrained
  smooth nonconvex optimization that allow the decrease condition
  \eqref{eq:decr} to hold, and approximate optimality conditions
  \eqref{eq:1oi} or \eqref{eq:2oi} to be enforced in a natural way,
  finding points that satisfy such conditions within a certain number
  of iterations that depends on the tolerances
  (\cite{cartis2012complexity,Ghadimi2016,Royer2019,GraN17a,birgin2017worst}).} Among
  these, the Newton-CG method described in \cite{Royer2019} has good
complexity guarantees as well as good practical performance.

To review the properties of the algorithm in \cite{Royer2019}, we
consider the following unconstrained problem:
\begin{align}\label{opt: uncon}
\min_{z \in \bR^n} & \quad F(z)
\end{align}
where $F: \bR^n \rightarrow \bR$ is a twice Lipschitz continuously
differentiable function. The Newton-CG approach makes use of the
  following assumption.
\begin{assumption}\label{Ass: NCG}
  \noindent
  \begin{itemize}
\item[(a)] The set $\{ z \mid F(z) \le F(z_0) \}$ is compact, where
  $z_0$ is the initial point.
\item[(b)] $F$ is twice uniformly Lipschitz continuously
  differentiable on a neighborhood of $\{ z \mid F(z) \le F(z_0) \}$
  that includes the trial points generated by the algorithm.
\item[(c)] Given $\epsilon_H > 0$ and $0 < \delta \ll 1$, a procedure
  called by the algorithm to verify approximate positive definiteness
  of $\nabla^2 F(z)$ either certifies that $\nabla^2 F(z) \succeq
  -\epsilon_H I$ or finds a direction along which curvature of
  $\nabla^2 F(z)$ is smaller than $-\epsilon_H/2$ in at most
$\Nmeo \triangleq \min \{ n, 1 + \lceil \Cmeo \epsilon_H^{-1/2} \rceil \}$
Hessian-vector products, with probability $1-\delta$, where $\Cmeo$
depends at most logarithmically on $\delta$ and $\epsilon_H$.
\end{itemize}
\end{assumption}
Based on the above assumption, the following iteration complexity is
indicated by \cite[Theorem~4]{Royer2019}.
\begin{theorem} \label{thm: NCG} Suppose that Assumption~\ref{Ass:
    NCG} holds. The Newton-CG terminates at a point satisfying
\begin{align} \label{optcon: NCG}
\| \nabla F(z) \| \le \epsilon_g, \quad \lambda_{\min}(\nabla^2F(z))
\ge - \epsilon_H ,
\end{align}
in at most $\bar K$ iterations with probability at least
$(1-\delta)^{\bar K}$, where
\begin{align} \label{barK-bound}
\bar K \triangleq \left\lceil \CNCG \max\{ L_{F,H}^3, 1 \} ( F(z_0) - \Flow )\max\{ \epsilon_g^{-3} \epsilon_H^3, \epsilon_H^{-3} \} \right\rceil + 2.
\end{align}
(With probability at most $1 - (1-\delta)^{\bar K}$, it terminates
incorrectly within $\bar K$ iterations at a point at which $\| \nabla
F(z) \| \le \epsilon_g$ but $\lambda_{\min}(\nabla^2F(z)) < -
\epsilon_H$.) Here, $\CNCG$ is a constant that depends on
user-defined algorithmic parameters, $L_{F,H}$ is the Lipschitz
constant for $\nabla^2 F$ on the neighborhood defined in
Assumption~\ref{Ass: NCG}(b), and $\Flow$ is the lower bound of
$F(z)$.
\end{theorem}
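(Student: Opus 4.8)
The plan is to \emph{obtain Theorem~\ref{thm: NCG} directly from \cite[Theorem~4]{Royer2019}}: Assumption~\ref{Ass: NCG} has been phrased precisely so that it supplies exactly the hypotheses used in that reference, so the only work is to record the correspondence and read off the bound. First I would check the matching of assumptions. Part~(a) gives a compact sublevel set $\{z \mid F(z) \le F(z_0)\}$, hence a finite lower bound $\Flow \le F(z)$ for all iterates; part~(b) gives twice uniform Lipschitz continuity of $F$ on the neighborhood that contains the trial points, in particular a Lipschitz constant $L_{F,H}$ for $\nabla^2 F$; and part~(c) is exactly the randomized minimum-eigenvalue oracle the Newton-CG analysis invokes, with per-call failure probability $\delta$ (its per-call cost of $\Nmeo$ Hessian-vector products will matter only for the operation-complexity results later, not for the iteration count here). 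With these in hand, \cite[Theorem~4]{Royer2019} asserts that Newton-CG run with tolerances $\epsilon_g,\epsilon_H$ terminates at a point satisfying \eqref{optcon: NCG}.

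The explicit bound \eqref{barK-bound} would then follow by tracking the objective decrease guaranteed per accepted step. In \cite{Royer2019} each step that does not yet meet the termination test is of one of a few types (a damped-Newton/CG step taken while $\|\nabla F\|>\epsilon_g$, or a negative-curvature step), and in every case $F$ decreases by at least $\mathrm{const}\cdot \min\{\epsilon_g^3\epsilon_H^{-3},\,\epsilon_H^3\}/\max\{L_{F,H}^3,1\}$; dividing the total admissible decrease $F(z_0)-\Flow$ by this amount, using $1/\min\{a,b\}=\max\{1/a,1/b\}$ to convert the denominator into $\max\{\epsilon_g^{-3}\epsilon_H^3,\epsilon_H^{-3}\}$, and adding the small additive constant that accounts for the terminating iterations, yields precisely $\bar K$ with a constant $\CNCG$ collecting the user-chosen algorithmic parameters. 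The probability statement is a union bound over the at most $\bar K$ oracle calls (one per iteration): each call is correct with probability $1-\delta$, so all of them behave correctly with probability at least $(1-\delta)^{\bar K}$, and on the complementary event the sole failure mode is that the algorithm stops having certified $\lambda_{\min}(\nabla^2 F(z))\ge-\epsilon_H$ while in truth $\lambda_{\min}(\nabla^2 F(z))<-\epsilon_H$ (the gradient condition $\|\nabla F(z)\|\le\epsilon_g$ being deterministic).

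Since \cite{Royer2019} already carries out this argument in full, I would not reproduce it; my proof would consist of the above verification together with a citation to \cite[Theorem~4]{Royer2019}. The only thing requiring care --- the ``main obstacle'' such as it is --- is aligning the notation and constant names between \cite{Royer2019} and the present paper and confirming that the stated form $\max\{\epsilon_g^{-3}\epsilon_H^3,\epsilon_H^{-3}\}$ is indeed what emerges (a useful sanity check: setting $\epsilon_H=\sqrt{\epsilon_g}$ collapses both terms to the familiar $\epsilon_g^{-3/2}$ rate, while setting $\epsilon_g=\epsilon_H$ gives $\epsilon_H^{-3}$).
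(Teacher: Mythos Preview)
Your proposal is correct and matches the paper's own treatment: the paper does not prove Theorem~\ref{thm: NCG} at all but simply states it as being ``indicated by \cite[Theorem~4]{Royer2019},'' so your plan to cite that result after checking that Assumption~\ref{Ass: NCG} supplies its hypotheses is exactly what is done. Your additional sketch of the per-step decrease and union-bound argument goes further than the paper itself, which offers no such discussion.
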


Since in the Newton-CG approach, Hessian-vector products are the
fundamental operations, \cite{Royer2019} also derives operation
complexity results, in which the operations are either evaluations of
$\nabla F(z)$ or evaluations of matrix-vector products involving
$\nabla^2 F(z)$ and an arbitrary vector (which can be computed
without actually evaluating the Hessian itself).
\begin{corollary} \label{Corr: NCG} Suppose that Assumption~\ref{Ass:
    NCG} holds. Let $\bar K$ be defined as in \eqref{barK-bound}. Then
  with probability at least $(1- \delta)^{\bar K}$, Newton-CG
  terminates at a point satisfying \eqref{optcon: NCG} after at most
\begin{align*}
( \max\{ 2 \min\{ n, J(U_{F,H}, \epsilon_H) \} + 2, \Nmeo  \} ) \bar K 
\end{align*}
Hessian-vector products, where $U_{F,H}$ is the upper bound for
$\nabla^2F(z)$ on  the neighborhood defined in
Assumption~\ref{Ass: NCG}(b) and $J(\cdot,\cdot)$ satisfies
\begin{align} \label{def: J}
J(U_{F,H}, \epsilon_H) \le \min \left\{ n, \left\lceil \left( \sqrt{\kappa} + \frac{1}{2} \right) \log \left( \frac{144(\sqrt{\kappa} + 1)^2 \kappa^6 }{ \zeta ^2 } \right)  \right\rceil \right\},
\end{align} 
where $\kappa \triangleq \frac{ U_{F,H} + 2 \epsilon_H }{\epsilon_H}$
and $\zeta$ is a user-defined algorithmic parameter.  (With
probability at most $1 - (1-\delta)^{\bar K}$, it terminates
incorrectly within such complexity at a point for which
$\| \nabla F(z) \| \le \epsilon_g$ but
$\lambda_{\min}(\nabla^2F(z)) < - \epsilon_H$.) 
\end{corollary}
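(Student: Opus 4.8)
The plan is to obtain Corollary~\ref{Corr: NCG} by combining the outer-iteration bound of Theorem~\ref{thm: NCG} with a worst-case count of Hessian-vector products incurred per iteration, exactly along the lines of the operation-complexity analysis in \cite{Royer2019}. First I would recall the structure of the Newton-CG iteration of \cite{Royer2019}: at each iteration the method performs exactly one of two kinds of work. Either it runs a (capped) conjugate-gradient routine on the damped system $(\nabla^2 F(z) + 2\epsilon_H I)d = -\nabla F(z)$ to produce a descent or negative-curvature direction, or it invokes the minimum-eigenvalue procedure of Assumption~\ref{Ass: NCG}(c). In the second case the cost is at most $\Nmeo$ Hessian-vector products by that assumption. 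In the first case each CG iteration costs one Hessian-vector product, and a bounded number of additional products are needed to form and test the returned direction; the accounting of \cite{Royer2019} then bounds the cost of one such call by $2\min\{n, J(U_{F,H},\epsilon_H)\} + 2$, where $J(U_{F,H},\epsilon_H)$ is the number of CG iterations performed. Since we cannot know in advance which branch is taken, the cost of a single outer iteration is at most $\max\{2\min\{n, J(U_{F,H},\epsilon_H)\}+2,\ \Nmeo\}$.

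Next I would justify the bound \eqref{def: J} on $J(U_{F,H},\epsilon_H)$. On the neighborhood of the level set in Assumption~\ref{Ass: NCG}(b) we have $\nabla^2 F(z) \preceq U_{F,H} I$, so the damped matrix $\nabla^2 F(z) + 2\epsilon_H I$ has all its eigenvalues at most $U_{F,H} + 2\epsilon_H$; the capping mechanism monitors the CG iterates and halts (either with a sufficiently accurate approximate solution or with a direction of curvature below $-\epsilon_H/2$) before the effective condition number can exceed $\kappa = (U_{F,H} + 2\epsilon_H)/\epsilon_H$. The classical convergence rate of CG on a well-conditioned system then gives relative residual $\zeta$ within $O(\sqrt{\kappa}\,\log(1/\zeta))$ iterations; inserting the specific algorithmic constants of \cite{Royer2019} and also capping at $n$ (exact termination of CG in $n$ steps) yields the right-hand side of \eqref{def: J}.

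Finally I would assemble the pieces. By Theorem~\ref{thm: NCG}, with probability at least $(1-\delta)^{\bar K}$ the algorithm terminates within $\bar K$ outer iterations at a point satisfying \eqref{optcon: NCG}; multiplying $\bar K$ by the per-iteration bound above gives the claimed total of $\big(\max\{2\min\{n, J(U_{F,H},\epsilon_H)\}+2,\ \Nmeo\}\big)\bar K$ Hessian-vector products. The failure branch is inherited verbatim from Theorem~\ref{thm: NCG}: the only possible error is a false positive-definiteness certificate from the procedure in Assumption~\ref{Ass: NCG}(c), so with probability at most $1-(1-\delta)^{\bar K}$ the method may stop within the same operation budget at a point where $\|\nabla F(z)\| \le \epsilon_g$ but $\lambda_{\min}(\nabla^2 F(z)) < -\epsilon_H$. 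I expect the only real difficulty here to be expository rather than mathematical --- tracking the exact constants and the ``$+2$'' bookkeeping in the CG cost, and phrasing the probabilistic statement consistently (same event, same $\bar K$) --- since the result is essentially \cite{Royer2019} applied to \eqref{opt: uncon} with the identifications of $\epsilon_g$, $\epsilon_H$, $U_{F,H}$, $L_{F,H}$, and $\Flow$ made explicit, and requires no new estimate.
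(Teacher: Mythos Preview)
Your proposal is correct. The paper itself does not supply a proof of this corollary; it is simply quoted as a result of \cite{Royer2019} (operation complexity for Newton-CG), so there is nothing in the paper to compare against beyond the citation. Your reconstruction---bounding the per-iteration Hessian-vector products by $\max\{2\min\{n,J(U_{F,H},\epsilon_H)\}+2,\ \Nmeo\}$ according to whether the capped-CG branch or the minimum-eigenvalue oracle is invoked, deriving \eqref{def: J} from the CG convergence rate with effective condition number $\kappa$, and then multiplying by the outer-iteration count $\bar K$ from Theorem~\ref{thm: NCG} while inheriting its probabilistic failure mode---is exactly the argument of \cite{Royer2019} and is what the paper is implicitly invoking.
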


To get total iteration and operation complexity we can aggregate the
cost of applying Newton-CG to each subproblem in Algorithm~\ref{Alg:
  Prox-PDA}.  We present a critical lemma before deriving the total
iteration complexity and operation complexity (Theorem~\ref{thm: total
  complexity} and Corollary~\ref{corr: operation complexity}).  For
these purposes, we denote the objective to be minimized at iteration
\Rev{$k$} of Algorithm~\ref{Alg: Prox-PDA} as follows:
\begin{align} \label{def: psik}
\psi_k(x) \triangleq \Lscr_\rho(x,\lambda_k) + \frac{\beta}{2}\| x -
x_k \|^2.
\end{align}
Additionally, we recall from Assumption~\ref{Ass: Compact sublevel
  set} that
$S_{\alpha}^0 \triangleq \{ f(x) + \frac{\rho_0}{2} \| c(x) \|^2 \leq
\alpha \}$ is either empty or compact for all $\alpha$.
\begin{lemma} \label{lm: compact level set} Suppose that assumptions
  and parameter settings in Theorem~\ref{thm: first-order
      complexity - inexactsubproblem} hold. In addition, suppose that $\rho \ge \tfrac12 \| \lambda_0 \|^2  + \rho_0 $. Then we
  have
\[
\{ x \mid \psi_k(x) \le \psi_k(x_k) \} \subseteq
S_{ \hat \alpha }^0,
\]
and
\begin{align}\label{ineq: psidecbound}
\psi_k(x_k) - \psi_k^{low} \le \hat{\alpha} - \bar L,
\end{align}
for all $k \ge 0$, where $\psi_k^{low} \triangleq \inf_{x \in \bR^n} \psi_k(x) $ and $ \hat \alpha $ is defined in \eqref{eq:alpha_l0-inexact}. Hence $\{ x \mid \psi_k(x)
\le \psi_k(x_k) \}$ is compact for all $k \ge 0$.
\end{lemma}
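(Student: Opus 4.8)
The plan is to obtain both displayed bounds from the two elementary manipulations already used in the proof of Lemma~\ref{lm: xk.bd.}: discarding the nonnegative proximal term $\tfrac{\beta}{2}\|x-x_k\|^2$ appearing in $\psi_k$, and trading the $\rho$-weighted penalty for the $\rho_0$-weighted one by means of the inequality $-\tfrac{r}{2}\|a\|^2-\tfrac{1}{2r}\|b\|^2\le a^Tb$ with $r=\rho-\rho_0>0$ (positive because $\rho\ge 3\rho_0$ and $\rho\ge 1$).

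First I would prove the inclusion. Fix $k\ge 0$ and take any $x$ with $\psi_k(x)\le\psi_k(x_k)$. Since $\psi_k(x)\ge\Lscr_\rho(x,\lambda_k)=f(x)+\lambda_k^Tc(x)+\tfrac{\rho}{2}\|c(x)\|^2$, applying the above inequality with $a=c(x)$ and $b=\lambda_k$ gives
\[
f(x)+\tfrac{\rho_0}{2}\|c(x)\|^2\le\psi_k(x_k)+\frac{\|\lambda_k\|^2}{2(\rho-\rho_0)},
\]
so it suffices to show the right-hand side is at most $\hat\alpha$. For $k\ge 1$, Lemma~\ref{lm: xk.bd.} gives $\|\lambda_k\|^2\le(M_f+\beta D_S+1)^2/\sigma^2\le 2(\rho-\rho_0)$, the last step by \eqref{Parameter Ass.-inexact}, so the fraction is $\le 1$; and $\psi_k(x_k)=\Lscr_\rho(x_k,\lambda_k)\le P_k$, while the Lyapunov estimate \eqref{Hyp-inexact.3} together with $\sum_t\|\tr_t\|^2\le R$ and the lower bound on $\rho$ yields $P_k\le\hat\alpha-1$. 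For $k=0$ the Lyapunov function is undefined, so here I would invoke the extra hypothesis $\rho\ge\tfrac12\|\lambda_0\|^2+\rho_0$, which gives both $\|\lambda_0\|^2\le 2(\rho-\rho_0)$ and $\|\lambda_0\|^2\le 2\rho$; then \eqref{ineq: AL0.bd.} gives $\psi_0(x_0)=\Lscr_\rho(x_0,\lambda_0)\le\bar U+\tfrac{1}{2\rho}\|\lambda_0\|^2+C_0\le\bar U+C_0+1$, and a short computation using $\bar L\le\bar U+C_0$ (which follows from Assumption~\ref{Ass: f.ub}, $\|c(x_0)\|^2\le C_0/\rho$, and $\rho\ge 3\rho_0$) and the definition \eqref{eq:alpha_l0-inexact} of $\hat\alpha$ shows $\psi_0(x_0)\le\hat\alpha-1$. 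In all cases $f(x)+\tfrac{\rho_0}{2}\|c(x)\|^2\le\hat\alpha$, i.e., $x\in S_{\hat\alpha}^0$.

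For the inequality \eqref{ineq: psidecbound}, I would apply the same manipulation to an arbitrary $x\in\bR^n$ to get $\psi_k(x)\ge f(x)+\tfrac{\rho_0}{2}\|c(x)\|^2-\tfrac{\|\lambda_k\|^2}{2(\rho-\rho_0)}\ge\bar L-1$, using \eqref{eq:defL} and the bound $\tfrac{\|\lambda_k\|^2}{2(\rho-\rho_0)}\le 1$ (valid for every $k\ge 0$ by the above), hence $\psi_k^{low}\ge\bar L-1$; combining with $\psi_k(x_k)\le\hat\alpha-1$ gives $\psi_k(x_k)-\psi_k^{low}\le\hat\alpha-\bar L$. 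Finally, $\{x\mid\psi_k(x)\le\psi_k(x_k)\}$ is closed by continuity of $\psi_k$ and contained in $S_{\hat\alpha}^0$, which is compact and nonempty (it contains $x_k$) by Assumption~\ref{Ass: Compact sublevel set}, so it is itself compact.

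The step needing the most care is the case $k=0$: because $P_0$ does not exist, the bound on $\psi_0(x_0)$ must come directly from \eqref{ineq: AL0.bd.}, and it is precisely the new assumption $\rho\ge\tfrac12\|\lambda_0\|^2+\rho_0$ that controls the terms $\|\lambda_0\|^2/\rho$ and $\|\lambda_0\|^2/(\rho-\rho_0)$ there; one must also check that the constant $\hat\alpha$ from \eqref{eq:alpha_l0-inexact} is generous enough to absorb all of these, which reduces to the elementary estimate $\bar L\le\bar U+C_0$.
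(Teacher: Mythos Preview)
Your proposal is correct and follows essentially the same approach as the paper: bound $\psi_k(x_k)\le\hat\alpha-1$ from above (via $P_k$ for $k\ge 1$ and via \eqref{ineq: AL0.bd.} for $k=0$), bound $\psi_k(x)\ge f(x)+\tfrac{\rho_0}{2}\|c(x)\|^2-1$ from below, and combine. The only cosmetic difference is in the $k=0$ upper bound: the paper adds the nonnegative quantity $6(\bar U+C_0-\bar L+\tfrac{5}{4}\|\lambda_0\|^2)$ from \eqref{ineq: nn0} and uses $\rho\ge 1$, whereas you use the sharper observation $\bar L\le\bar U+C_0$ together with the additional hypothesis $\rho\ge\tfrac12\|\lambda_0\|^2+\rho_0$; both routes land at $\psi_0(x_0)\le\hat\alpha-1$.
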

\begin{proof}
Because of $c_1>0$ and $c_2>0$ and \eqref{ineq: nn7}, we
have for any $k \ge 1$ that
\begin{align} \label{ineq: nn2}
P_{k+1} \le P_1 + \frac{16 (M_c^2 + \sigma^2) R  }{ \rho \sigma^4} \overset{ \eqref{Parameter Ass.-inexact} }{\le} P_1 + 1.
\end{align}
Thus for any $k \ge 1$, we have
\[
  \psi_k(x_k) = \Lscr_{\rho}(x_k, \lambda_k) \le P_k \le P_1 + 1,
\]
which, using \eqref{Bound on P1} and \eqref{eq:alpha_l0-inexact}, implies that
\begin{equation}
\label{ineq: nn4}
 \psi_k(x_k) \le 7 \bar U + 7C_0 + 13 \| \lambda_0 \|^2 - 6 \bar L + 1 = \hat{\alpha}-1.
\end{equation}
Note that \eqref{ineq: nn4} also holds when $k = 0$ since
\begin{align*}
\psi_0(x_0) & = \Lscr_\rho(x_0,\lambda_0) \overset{\eqref{ineq: AL0.bd.}}{\le} \bar U + \frac{1}{2\rho} \| \lambda_0 \|^2 + C_0 \\
& \overset{\eqref{ineq: nn0}}{\le} \bar U + \frac{1}{2\rho} \| \lambda_0 \|^2 + C_0 + 6( \bar U + C_0 - \bar L + (5/4) \| \lambda_0 \|^2 )\\
& \overset{ (\rho \ge 1) }{\le} 7 \bar U + 7 C_0 - 6 \bar L + 8 \| \lambda_0 \|^2 <  \hat \alpha - 1
\end{align*}
Further, for any $k \ge 0$, we have
\begin{align}
\notag
  \psi_k(x) & = \sL_\rho( x, \lambda_k ) + \frac{\beta}{2} \| x - x_k \|^2 \\
  \notag
  & = f(x) + \frac{\rho}{2} \| c(x) \|^2 + \lambda_k^T c(x) + \frac{\beta}{2} \| x - x_k \|^2 \\
\notag
& \ge f(x) + \frac{\rho}{2} \| c(x) \|^2 - \frac{ \| \lambda_k \|^2 }{ 2(\rho - \rho_0) } - \frac{ ( \rho - \rho_0 ) \| c(x) \|^2 }{2} \\ 
\notag
& \ge f(x) + \frac{\rho_0}{2} \| c(x) \|^2 - \frac{1}{2 (\rho - \rho_0)} \max\left\{ \| \lambda_0 \|^2, \frac{(M_f + \beta D_S + 1)^2}{\sigma^2} \right\}  \\
\notag
& \overset{ \eqref{Parameter Ass.-inexact} }{\ge} f(x) + \frac{\rho_0}{2} \| c(x) \|^2 - \max\left\{ \frac{ \| \lambda_0 \|^2 }{ 2 (\rho - \rho_0) }, 1 \right\},  \\
\label{ineq: nn5}
& \overset{ ( 2( \rho - \rho_0) \ge \| \lambda_0 \|^2 ) }{=} f(x) + \frac{\rho_0}{2} \| c(x) \|^2 - 1.
\end{align}
The second inequality holds because
$\| \lambda_k \| \le (M_f + \beta D_S + 1)^2/\sigma^2$,
$\forall k \ge 1$ from Lemma~\ref{lm: xk.bd.}. Then, for any
$k \ge 0$, we have by combining \eqref{ineq: nn4} and \eqref{ineq:
  nn5} that
\begin{equation}
  \label{ineq: psidiff}
 \psi_k(x_k) - \psi_k(x)
   \le \hat \alpha - \left( f(x) + \frac{\rho_0}{2} \| c(x) \|^2 \right). 
\end{equation}
Thus, for any $k \ge 0$, we have
\[
\psi_k(x) \le \psi_k(x_k) \implies \psi_k(x_k) - \psi_k(x) \ge 0 \overset{\eqref{ineq: psidiff}}{\implies}
f(x) + \frac{\rho_0}{2} \| c(x) \|^2 \le \hat \alpha. 
\]
Therefore $\{ x \mid \psi_k(x) \le \psi_k(x_k) \} \subseteq
S_{\hat \alpha}^0$ for all $k \ge 0$. For the claim \eqref{ineq: psidecbound}, note that
\begin{align*}
\psi_k(x_k) - \psi_k^{low} & =  \sup_{x \in \bR^n} ( \psi_k(x_k) - \psi_k(x) ) \\
& \overset{\eqref{ineq: psidiff}}{\le}  \sup_{x \in \bR^n} \left( \hat \alpha - \left( f(x) + \frac{\rho_0}{2} \| c(x) \|^2 \right) \right) \overset{\eqref{eq:defL}}{=} \hat \alpha - \bar L.
\end{align*}
 \qed
\end{proof}

By Lemma~\ref{lm: compact level set}, we know that if the Newton-CG
method of \cite{Royer2019} is used to minimize $\psi_k(x)$ at
iteration \Rev{$k$} of Algorithm~\ref{Alg: Prox-PDA},
Assumption~\ref{Ass: NCG}(a) is satisfied at the initial point
$x_k$. It also shows that the amount $\psi_k(x)$ can decrease at
iteration \Rev{$k$} is uniformly bounded for any $k \ge 0$. This is
important in estimating iteration complexity of Newton-CG to solve the
subproblem.

  
The following assumption is needed to prove complexity results about
the Newton-CG method.  Recall from definition \eqref{def: psik} that
\begin{align}
  \label{def:Hpsik}
  & \nabla^2 \psi_k(x)  \\
  \nonumber
  & = \nabla^2 f(x) + \sum_{i
  = 1}^m [\lambda_k]_i \nabla^2 c_i(x)  + \rho \sum_{i=1}^m c_i(x)
\nabla^2 c_i(x) + \rho \nabla c(x) \nabla c(x)^T + \beta I.
\end{align}
\begin{assumption} \label{Ass: boundNb}
  \begin{itemize}
\item[(a)] There exists a bounded open convex neighborhood $\sN_{\hat \alpha}$ of $S_{\hat
      \alpha}^0$, where $\hat{\alpha}$ is defined as in \eqref{eq:alpha_l0-inexact}, such that for any \Rev{$k \ge 0$}, the trial points of Newton-CG in
  iteration $k$ of Algorithm~\ref{Alg: Prox-PDA} lie in $\sN_{\hat \alpha}$. Suppose that on $\sN_{\hat \alpha}$, the functions $f(x)$ and $c_i(x)$, $i=1,2,\dotsc,m$
  are twice uniformly Lipschitz continuously differentiable.
\item[(b)] Given $\epsilon_{k+1}^H > 0$ and $0 < \delta \ll 1$ at
  iteration \Rev{$k \ge 0$}, the procedure called by Newton-CG to
  verify sufficient positive definiteness of $\nabla^2 \psi_k$ either
  certifies that $\nabla^2 \psi_k(x) \succeq -\epsilon_{k+1}^H I$
  or else finds a vector of curvature smaller than $-\epsilon_{k+1}^H/2$
  in at most
\begin{align} \label{def: Nmeo}
\Nmeo \triangleq \min \{ n, 1 + \lceil \Cmeo (\epsilon_{k+1}^H)^{-1/2} \rceil \}
\end{align} 
Hessian-vector products, with probability $1-\delta$, where $\Cmeo$
depends at most logarithmically on $\delta$ and $\epsilon_{k+1}^H $.
\end{itemize}
\end{assumption}

Boundedness and convexity of $\sN_{\hat \alpha}$
and Assumption~\ref{Ass: boundNb}(a) imply that $\nabla^2 \psi_k(x)$
is Lipschitz continuous on $\sN_{\hat \alpha}$. Thus,
Assumption~\ref{Ass: NCG}(b) holds for each subproblem. Further, if we
denote the Lipschitz constant for $\nabla^2 \psi_k$ by $L_{k,H}$, then
there exist $U_1$ and $U_2$ such that
\begin{equation} \label{eq:sh8}
  L_{k,H} \le U_1 \rho + U_2,
\end{equation}
where $U_1$ and $U_2$ depend only on $f$ and $c$, $\sN_{\hat
    \alpha}$, and the upper bound for $\| \lambda_k \|$ from
Lemma~\ref{lm: xk.bd.}. Moreover, if $c(x)$ is linear, then
$L_{k,H} = L_H$, where $L_H$ is the Lipschitz constant for $\nabla^2
f$.

The next theorem analyzes the total iteration complexity, given the
parameter settings in Theorem~\ref{thm: first-order complexity -
  inexactsubproblem} (with some additional requirements).
\begin{theorem} \label{thm: total complexity}
Consider Algorithm~\ref{Alg: Prox-PDA} with stopping criterion
\eqref{stopcrit}, and suppose that the subproblem in Step 1 is solved
with the Newton-CG procedure such that $x_{k+1}$ satisfies
\eqref{eq:1oi}, \eqref{eq:decr} and with high probability satisfies
\eqref{eq:2oi}. Suppose that Assumption~\ref{Ass: Compact
    sublevel set}, Assumption~\ref{Ass: weak} with $\sS = S_{\hat
    \alpha}^0$ (with $\hat \alpha$ defined in
  \eqref{eq:alpha_l0-inexact}), \Rev{Assumption~\ref{Ass: f.ub}} and Assumption \ref{Ass: boundNb}
hold. $\epsilon \in (0,1]$ and $\eta \in [1,2]$ are
  given. In addition, let $\| \tilde{r}_k \| \le \epsilon_k^g \triangleq \min\{ 1/k, \epsilon/2 \}$, for all $k \ge 1$ (so that
  $R = \sum_{k=1}^\infty 1/k^2 = \pi^2/6$). Let $\beta = \epsilon^\eta/2$ and \Rev{assume that $\rho \in [\rho_{\eta}, C_\rho \rho_\eta]$, where $C_{\rho}>1$ is constant and
\begin{align} \label{Parameter Ass.-inexact3}
\begin{aligned}
 \rho_{\eta} & := \max\left\{ \frac{ 16 \max\{C_1, C_2 \}}{\epsilon^\eta}, \frac{(M_f + \beta D_S + 1)^2}{2 \sigma^2} + \rho_0, \right. \\
& \quad\quad\quad \left.   \rev{ \frac{\| \lambda_0 \|^2}{2} + \rho_0,} \frac{16(M_c^2 + \sigma^2) R }{ \sigma^4 }, 3\rho_0, 1 \right\},
\end{aligned}
\end{align}}
where $C_1$ and $C_2$ are defined in \eqref{eq:C1C2} and $D_S$ is the
diameter of $S_{\hat \alpha}^0$ (see \eqref{def: DS}). \rev{Suppose
  that $x_0$ satisfies $ \| c(x_0) \|^2 \le \min\{ C_0 /\rho , 1 \} $,
  where $C_0$ is the constant appearing in
  \eqref{eq:alpha_l0-inexact}.}  Then we have the following.
\begin{itemize}
\item[(i)] If we set $\epsilon_k^H \equiv \sqrt{\epsilon}/2$, then the
  total number of iterations of Newton-CG before Algorithm~\ref{Alg:
    Prox-PDA} stops and outputs an $\epsilon$-1o solution is
  $\sO(\epsilon^{ -2\eta - 7/2 })$, which is optimized when $\eta = 1$. When
  $c(x)$ is linear, this total iteration complexity is $\sO(\epsilon^{
    \eta - 7/2 })$, which is optimized when $\eta = 2$.
\item[(ii)] If we let $\epsilon_k^H \equiv \epsilon/2$, then the total
  iteration number before Algorithm~\ref{Alg: Prox-PDA} stops and
  outputs an $\epsilon$-1o solution with probability $1$ and an
  $\epsilon$-2o solution with probability at least $(1 - \delta)^{\bar
    K_{T_\epsilon}}$ is $\sO(\epsilon^{-2\eta-5})$, and $\bar
  K_{T_\epsilon} = \sO(\epsilon^{-3\eta - 3})$, where $T_\epsilon$ is
  defined in \eqref{def: Teps} and $\bar K_{T_\epsilon}$ is the
  iteration complexity at iteration \Rev{$T_{\epsilon}-1$}, defined
  below in \eqref{per.iter.complex}. This bound is optimized when
  $\eta = 1$. When $c(x)$ is linear, this complexity is
  $\sO(\epsilon^{\eta-5})$, and $\bar K_{T_\epsilon} =
  \sO(\epsilon^{-3})$, so the optimal setting for $\eta$ is $\eta = 2$
  in this case.
\end{itemize}
\end{theorem}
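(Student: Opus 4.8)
The plan is to build the total count from two pieces already available: the outer-loop bound $T_\epsilon = \sO(\epsilon^{\eta-2})$ from Theorem~\ref{thm: first-order complexity - inexactsubproblem}(ii), and a per-subproblem Newton-CG iteration count $\bar K_k$ for minimizing $\psi_k$; the total is then at most $\sum_{k=0}^{T_\epsilon-1}\bar K_k \le T_\epsilon\max_k\bar K_k$.

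First I would verify that Newton-CG is legitimately applicable to every subproblem $\min_x\psi_k(x)$, regarded as an instance of problem \eqref{opt: uncon} with starting point $x_k$. Lemma~\ref{lm: compact level set} --- whose extra hypothesis $\rho \ge \tfrac12\|\lambda_0\|^2 + \rho_0$ is built into $\rho_\eta$ in \eqref{Parameter Ass.-inexact3} --- gives $\{x : \psi_k(x) \le \psi_k(x_k)\} \subseteq S_{\hat\alpha}^0$, a fixed compact set, and the uniform decrease budget $\psi_k(x_k) - \psi_k^{low} \le \hat\alpha - \bar L$. Hence Assumption~\ref{Ass: NCG}(a) holds at $x_k$; Assumption~\ref{Ass: boundNb}(a) supplies Assumption~\ref{Ass: NCG}(b); and Assumption~\ref{Ass: boundNb}(b) is exactly Assumption~\ref{Ass: NCG}(c). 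Since Newton-CG started from $x_k$ decreases $\psi_k$ monotonically, \eqref{eq:decr} holds, and because the gradient-norm half of the Theorem~\ref{thm: NCG} termination guarantee is deterministic, \eqref{eq:1oi} holds with $\|\tilde r_{k+1}\| \le \epsilon_{k+1}^g$ regardless of how the random curvature check turns out; only \eqref{eq:2oi} is probabilistic. Consequently the first-order analysis of Theorem~\ref{thm: first-order complexity - inexactsubproblem} applies with probability $1$ (note $R = \pi^2/6$ and $\|\tilde r_k\| \le \epsilon/2$ here), giving $T_\epsilon = \sO(\epsilon^{\eta-2})$ and an $\epsilon$-1o output $x_{T_\epsilon}$; and by the computation in the proof of Corollary~\ref{Corr: 2complexity - inexact}, $x_{T_\epsilon}$ is additionally $\epsilon$-2o whenever \eqref{eq:2oi} holds at outer iteration $T_\epsilon-1$, which occurs with probability at least $(1-\delta)^{\bar K_{T_\epsilon}}$ (only that single curvature verification affects second-order optimality of $x_{T_\epsilon}$).

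Next I would specialize \eqref{barK-bound} to $\psi_k$ and bound its three factors uniformly in $k$: the decrease factor is $\psi_k(x_k) - \psi_k^{low} = \sO(1)$; by \eqref{eq:sh8}, $L_{k,H} \le U_1\rho + U_2$, and since $\beta = \epsilon^\eta/2 \le 1$ the constants $C_1,C_2$ of \eqref{eq:C1C2} stay bounded away from $0$ (below by $C_1^o > 0$ of \eqref{eq:C1o}) and from $\infty$, so for $\eta \ge 1$ the term $16\max\{C_1,C_2\}/\epsilon^\eta$ dominates the other (bounded) terms of $\rho_\eta$, yielding $\rho = \Theta(\epsilon^{-\eta})$ and $\max\{L_{k,H}^3,1\} = \sO(\epsilon^{-3\eta})$ --- while $L_{k,H} = L_H = \sO(1)$ when $c$ is linear; finally, since $k+1 \le T_\epsilon = \sO(\epsilon^{-1})$ for $\eta \ge 1$, we have $(\epsilon_{k+1}^g)^{-1} = \max\{k+1, 2/\epsilon\} = \sO(\epsilon^{-1})$, so with $\epsilon_{k+1}^H = \sqrt\epsilon/2$ (part (i)) the tolerance factor $\max\{(\epsilon_{k+1}^g)^{-3}(\epsilon_{k+1}^H)^3,\,(\epsilon_{k+1}^H)^{-3}\}$ is $\sO(\epsilon^{-3/2})$, and with $\epsilon_{k+1}^H = \epsilon/2$ (part (ii)) it is $\sO(\epsilon^{-3})$. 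Multiplying the three factors gives $\bar K_k = \sO(\epsilon^{-3\eta-3/2})$ in case (i) and $\bar K_k = \sO(\epsilon^{-3\eta-3})$ in case (ii), uniformly in $k$ (replace $\epsilon^{-3\eta}$ by $\sO(1)$ when $c$ is linear), so in particular $\bar K_{T_\epsilon} = \sO(\epsilon^{-3\eta-3})$ in general and $\sO(\epsilon^{-3})$ for linear $c$. Summing over the $\sO(\epsilon^{\eta-2})$ outer iterations yields total counts $\sO(\epsilon^{-2\eta-7/2})$ and $\sO(\epsilon^{-2\eta-5})$ (respectively $\sO(\epsilon^{\eta-7/2})$ and $\sO(\epsilon^{\eta-5})$ for linear $c$), and minimizing the exponent magnitude over $\eta\in[1,2]$ selects $\eta=1$ in general and $\eta=2$ in the linear case.

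The step I expect to be the main obstacle is the uniformity claimed in the previous paragraph: one must show that the decrease budget, the Hessian-Lipschitz constant $L_{k,H}$, and the inner tolerances all admit bounds independent of the outer index $k$ and of the stated $\epsilon$-order, so that a single worst-case $\bar K_k$ can be multiplied by $T_\epsilon$. This relies on Lemma~\ref{lm: compact level set} (common compact level set and constant decrease budget), on the exact form \eqref{Parameter Ass.-inexact3} of $\rho$ together with \eqref{eq:sh8} (to pin $\rho = \Theta(\epsilon^{-\eta})$), and on controlling how the growing index in $\epsilon_{k+1}^g = \min\{1/(k+1),\epsilon/2\}$ interacts with $T_\epsilon = \sO(\epsilon^{-1})$. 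A secondary subtlety is the probabilistic bookkeeping of the second paragraph --- separating the deterministic first-order and decrease guarantees of Newton-CG from its probabilistic curvature certification, so that the $\epsilon$-1o conclusion is unconditional and only the final subproblem's certification enters the $\epsilon$-2o success probability.
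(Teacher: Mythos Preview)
Your proposal is correct and follows essentially the same route as the paper: verify Assumption~\ref{Ass: NCG} for each subproblem via Lemma~\ref{lm: compact level set} and Assumption~\ref{Ass: boundNb}, invoke Theorem~\ref{thm: first-order complexity - inexactsubproblem} for $T_\epsilon = \sO(\epsilon^{\eta-2})$, bound each $\bar K_k$ uniformly using $\psi_k(x_k)-\psi_k^{low}\le\hat\alpha-\bar L$, $L_{k,H}=\sO(\rho)=\sO(\epsilon^{-\eta})$ (or $\sO(1)$ for linear $c$), and $(\epsilon_k^g)^{-1}=\sO(\epsilon^{-1})$ for $k\le T_\epsilon$, then multiply and sum. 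Your handling of the probabilistic bookkeeping---that only the curvature certification at outer iteration $T_\epsilon-1$ affects the $\epsilon$-2o conclusion, yielding success probability $(1-\delta)^{\bar K_{T_\epsilon}}$---also matches the paper exactly.
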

\begin{proof}
  We first prove (i). Note that if we use $x_k$ as the initial point
  for Newton-CG at iteration \Rev{$k$}, then \eqref{eq:decr} will be
  automatically satisfied because Newton-CG decreases the objective
  $\psi_k$ at each iteration. Due to Lemma~\ref{lm: compact level set}
  and Assumption~\ref{Ass: boundNb}, we know that Assumption~\ref{Ass:
    NCG} is satisfied for each subproblem. Thus, at iteration
  \Rev{$k$}, according to Theorem~\ref{thm: NCG}, given positive
  tolerances $\epsilon_g = \epsilon_{k+1}^g$
  and $\epsilon_H = \epsilon_{k+1}^H$, Newton-CG will terminate at a
  point $x_{k+1}$ that satisfies \eqref{eq:1oi} such that $\| \tilde
  r_{k+1} \| \leq \epsilon_{k+1}^g$ with probability 1, and that
  satisfies \eqref{eq:2oi} with probability $(1-\delta)^{\bar
    K_{k+1}}$, within
\begin{align} \label{per.iter.complex}
  & \bar K_{k+1}  \\
  \notag
  & \triangleq \left\lceil \CNCG \max\{ L_{k,H}^3, 1 \} ( \psi_k(x_k) - \psi_k^{low} ) \max\{ (\epsilon_{k+1}^g)^{-3} (\epsilon_{k+1}^H)^3, (\epsilon_{k+1}^H)^{-3} \} \right\rceil + 2.
\end{align}
iterations, where $L_{k,H}$ is the Lipschitz constant for $\nabla^2
\psi_k(x)$.  By substituting \eqref{ineq: psidecbound} from Lemma~\ref{lm: compact level set} into
\eqref{per.iter.complex}, we obtain
\begin{align} \label{ineq: Kk}
  \bar K_{k+1} \le \left\lceil \CNCG \max\{ L_{k,H}^3, 1 \} ( \hat \alpha - \bar L ) \max\{ (\epsilon_{k+1}^g)^{-3}
(\epsilon_{k+1}^H)^3, (\epsilon_{k+1}^H)^{-3} \} \right\rceil + 2,
\end{align}
for any $k \ge 0$. From \eqref{eq:sh8} and the conditions on $\rho$,
we have
\begin{equation} \label{eq:LkH}
  L_{k,H} \le U_1 \rho + U_2 =
  \sO(\epsilon^{-\eta}),
\end{equation}
where $U_1$ and $U_2$ depend only on $f$ and $c$, $\sN_{\hat \alpha}$, and the upper bound for $\| \lambda_k \|$ from
Lemma~\ref{lm: xk.bd.}. When $c(x)$ is linear, we have $L_{k,H} \equiv
L_H$.

Since $\| \tr_k \| \le \epsilon_k^g = \min\{ 1/k, \epsilon/2 \}$ for all $k \ge
1$, the definition of $T_\epsilon$ in \eqref{def: Teps} and the result
of Theorem~\ref{thm: first-order complexity - inexactsubproblem} imply
that $T_\epsilon = \sO(1/\epsilon^{2 - \eta})$. Therefore, for any
$k \le T_\epsilon$ and $\eta \in [1,2]$, we have 
\begin{equation} \label{eq:pd4}
1/k \ge 1/T_\epsilon = \Omega (\epsilon^{2-\eta}) \implies 
\epsilon_k^g = \Omega(\epsilon)
\implies
(\epsilon_k^g) ^{-1} = \sO(\epsilon^{-1}).
\end{equation}
Thus, when $\epsilon_k^H \equiv \sqrt{\epsilon}/2$, the term
  involving $\epsilon_{k+1}^g$ and $\epsilon_{k+1}^H$ on the
  right-hand sides of \eqref{per.iter.complex} and \eqref{ineq: Kk}
  are $\sO(\epsilon^{-3/2})$. Therefore, we have from the bound for
$\bar{K}_k$, the estimate \eqref{eq:LkH}, and $T_\epsilon =
\sO(1/\epsilon^{2 - \eta})$ that the total iteration complexity to
obtain an $\epsilon$-1o solution is
\[
\sum_{k=1}^{ T_\epsilon } \bar K_k = \sum_{k=1}^{ T_\epsilon } \max \{ L_{k-1,H}^3,1 \} \sO(\epsilon^{-3/2}) = T_\epsilon \sO(\epsilon^{-3\eta}) \sO(\epsilon^{-3/2}) = \sO(\epsilon^{-2\eta - 7/2}).
\]
This bound is optimized when $\eta = 1$. When $c(x)$ is linear, we
have from $L_{k,H}= L_H = \sO(1)$ that the complexity is
\begin{align*}
\sum_{k=1}^{ T_\epsilon } \bar K_k = \sum_{k=1}^{ T_\epsilon } \max\{ L_H^3, 1 \} \sO(\epsilon^{-3/2}) = T_\epsilon \sO(\epsilon^{-3/2}) = \sO(\epsilon^{ \eta - 7/2}).
\end{align*}
This bound is optimized when $\eta = 2$.

We turn now to (ii). Since Algorithm~\ref{Alg: Prox-PDA} stops at
iteration \Rev{$T_\epsilon-1$}, Newton-CG will stop at the point
$x_{T_\epsilon}$ satisfying \eqref{eq:1oi} with probability $1$ and
\eqref{eq:2oi} with probability at least $(1 -
\delta)^{\bar{K}_{T_\epsilon}}$. Since $\epsilon_{T_\epsilon}^H =
\epsilon/2$, $\eta \in [1,2]$, and $\beta = \epsilon^\eta/2 \le
\epsilon/2$, the following conditions are satisfied with probability
at least $(1 - \delta)^{\bar{K}_{T_\epsilon}}$:
\begin{align*}
\nabla_{xx}^2 \Lscr_{\rho}(x_{T_\epsilon}, \lambda_{T_\epsilon - 1})  \overset{\eqref{eq:2oi}}{\succeq} - (\beta + \epsilon_{T_\epsilon}^H)I & \succeq - \epsilon I, \\
\implies \; \nabla^2 f(x_{T_\epsilon}) + \sum_{i = 1}^m [\lambda_{T_\epsilon}]_i \nabla^2 c_i(x_{T_\epsilon}) + \rho \nabla c(x_{T_\epsilon}) \nabla c(x_{T_\epsilon})^T & \succeq -\epsilon I, \\
\implies \; d^T \left( \nabla^2 f(x_{T_\epsilon}) + \sum_{i = 1}^m [\lambda_{T_\epsilon}]_i \nabla^2 c_i(x_{T_\epsilon}) \right) d & \geq -\epsilon \| d \|^2,\\
 \mbox{for any $d \in S(x_{T_\epsilon}) \triangleq \{ d \in \bR^n \mid [\nabla c(x_{T_\epsilon})]^T d = 0 \}$},&
\end{align*}
which matches condition \eqref{ineq3:epsilonKKT2} of
Definition~\ref{Def: epsilonKKT2}.  Therefore, $x_{T_\epsilon}$ is an
$\epsilon$-1o solution with probability 1 and an $\epsilon$-2o
solution with probability at least $(1 - \delta)^{\bar
  K_{T_\epsilon}}$. Since we have $(\epsilon_k^g)^{-1} =
  \sO(\epsilon^{-1})$ for $k \le T_{\epsilon}$ as in \eqref{eq:pd4},
  and $\epsilon_k^H = \epsilon/2$, the term involving
  $\epsilon_{k+1}^g$ and $\epsilon_{k+1}^H$ on the right-hand side of
  \eqref{per.iter.complex} and \eqref{ineq: Kk} is
  $\sO(\epsilon^{-3})$.  Recalling that $T_{\epsilon} =
  \sO(1/\epsilon^{2-\eta})$, the total iteration complexity to obtain
$x_{T_\epsilon}$
\begin{align*}
\sum_{k=1}^{ T_\epsilon } \bar K_k \overset{\eqref{ineq: Kk}}{=}
\sum_{k=1}^{ T_\epsilon } \max\{ L_{k-1,H}^3, 1 \} \sO(\epsilon^{-3})
\overset{\eqref{eq:LkH}}{=} T_\epsilon \sO(\epsilon^{-3 \eta})
\sO(\epsilon^{-3}) = \sO(\epsilon^{ -2\eta - 5}).
\end{align*}
This bound is optimized when $\eta = 1$. Note that
$$
\bar{K}_{T_\epsilon} \overset{\eqref{ineq: Kk}}{=} \max\{ L_{T_\epsilon -1,H}^3, 1 \} \sO(\epsilon^{-3}) \overset{\eqref{eq:LkH}}{=} \sO(\epsilon^{-3\eta - 3}).
$$
When $c(x)$ is linear, $L_{k,H}= L_H = \sO(1)$ and the
complexity to get $x_{T_\epsilon}$ is
\begin{align*}
\sum_{k=1}^{ T_\epsilon } \bar K_k \overset{\eqref{ineq: Kk}}{=} \sum_{k=1}^{ T_\epsilon } \max\{
L_H^3, 1 \} \sO(\epsilon^{-3}) = T_\epsilon \sO(\epsilon^{-3}) =
\sO(\epsilon^{ \eta - 5}),
\end{align*}
which is optimized when $\eta = 2$. Note that in this case 
$$
\bar K_{T_\epsilon} \overset{\eqref{ineq: Kk}}{=} \max\{ L_{H}^3, 1 \} \sO(\epsilon^{-3}) =
\sO(\epsilon^{- 3}).
$$ \qed
\end{proof}

\begin{remark}
  \rev{(i). A feasible point, if available, will satisfy $\|c(x_0)
    \|^2 \le \min\{ C_0 / \rho, 1 \}$ for all $\rho$. Otherwise, a
    ``Phase I'' procedure may be applied to the problem of minimizing
    $\| c(x) \|^2$. Since $\rho = \sO(\epsilon^{-\eta}) $, we have
    that $ C_0/\rho = \Omega( \epsilon^{\eta} )$, $\eta \in
    [1,2]$. Thus the Newton-CG algorithm could be use to find an
    approximate first-order point $\bar x$ such that $ \| \nabla_x (
    \| c(x) \|^2 ) \mid_{x = \bar x} \| = \| 2 \nabla c(\bar x) c(\bar
    x) \| \le \min \{\epsilon, \sqrt{C_0/\rho}, 1 \} =
    \Omega(\epsilon)$. If $\| c(\bar x) \| \le \min\{ \sqrt{ C_0/\rho
    }, 1 \}$, we can set $x_0 = \bar x$. Otherwise, $\| c(\bar x) \| >
    \min\{ \sqrt{ C_0/\rho }, 1 \} = \Omega(\epsilon)$, and we can
    terminate at the approximate infeasible critical point of $\| c(x)
    \|^2$, as in \cite{Car14a}. When Assumption~\ref{Ass: NCG} holds
    for $F(z) = \| c(z) \|^2$, Theorem~\ref{thm: NCG} indicates that
    the iteration complexity of Newton-CG to find $\bar{x}$ is
    $\sO(\epsilon^{-3/2})$ (where $\epsilon_g = \min \{\epsilon,
    \sqrt{ C_0/\rho }, 1 \}$, $\epsilon_H = \sqrt{\epsilon_g}$). Thus,
    the total iteration complexity of Proximal AL is not affected when
    we account for Phase 1.}\\ (ii). Note that when $c(x)$ is linear,
  the optimized total iteration complexity bounds to obtain
  $\epsilon$-1o and $\epsilon$-2o point are $\sO(\epsilon^{-3/2})$ and
  $\sO(\epsilon^{-3})$, respectively. These bounds match the best
  known ones in literature for linear constraints (see Table~\ref{tab:
    complex.}  and corresponding discussion in Section~\ref{subsec:
    relate.work.}). \Rev{When $c(x)$ is nonlinear, the optimized total
    iteration complexity bounds to locate $\epsilon$-1o and
    $\epsilon$-2o point are $\sO(\epsilon^{-11/2})$ and
    $\sO(\epsilon^{-7})$, respectively. These bound are not
    competitive with the evaluation complexity bounds derived for
    two-phase second-order methods in \cite{doi:10.1137/120869687} and
    \cite{Car19a} (see Table~\ref{tab: complex.}). These methods require solving a cubic regularization subproblem or minimizing a nonconvex program
    to global optimality per evaluation, which are potentially
    expensive computational tasks. The Newton-CG algorithm used for
    our inner loop has standard iterative linear algebra subproblems,
    and is equipped with worst-case operation complexity
    guarantees. We take up the issue of total operation complexity
    next.}
\end{remark}

Recalling the formula for $\nabla^2 \psi_k$ in \eqref{def:Hpsik}, we
define a constant $U_H$ such that
\begin{align}  \label{ineq: UH}
\| \nabla^2 \psi_k(x) \| \le U_H, \ \forall k \ge 0, \ \forall x \in \sS_{\hat \alpha}.
\end{align}
Since $f(x)$ and $c_i(x)$, $i=1,2,\dotsc,m$ are twice continuously
differentiable on a neighborhood $\sN_{\hat \alpha} \supseteq
  \sS_{\hat \alpha}$ (by Assumption~\ref{Ass: boundNb}), and
  $\sS_{\hat \alpha}$ is compact and $\lambda_k$ is upper bounded (by Lemma~\ref{lm: xk.bd.}), then such a $U_H > 0$
exists. Moreover, there exist quantities $\tilde{U}_1$, $\tilde{U}_2$
such that
\begin{equation} \label{eq:uc7}
  U_H \le \tilde{U}_1 \rho + \tilde{U}_2,
\end{equation}
where $\tilde{U}_1, \tilde{U}_2$ depend only $f(\cdot)$,
  $c(\cdot)$, $\sS_{\hat \alpha}$, $\beta$ (which is bounded if
equals to $\epsilon^\eta/2$ for all $\epsilon \le 1$ and $\eta \ge 0$), and
the upper bound for $\| \lambda_k \|$ in Lemma~\ref{lm: xk.bd.}.

We conclude this section with the result concerning operation
complexity of Algorithm~\ref{Alg: Prox-PDA} in which the subproblems
are solved inexactly with Newton-CG.
\begin{corollary} \label{corr: operation complexity}
Suppose that the setup and assumptions of Theorem~\ref{thm: total
  complexity} are satisfied. $U_H$ is a constant satisfying
\eqref{ineq: UH} and \eqref{eq:uc7}. $J(\cdot, \cdot)$ and $\Nmeo$ are
specified in Corollary~\ref{Corr: NCG} and Assumption~\ref{Ass:
  boundNb}(b), respectively. Let $\Ktotal \triangleq
\sum_{k=1}^{T_{\epsilon}} \bar{K}_k$ denote the total iteration
complexity for Algorithm~\ref{Alg: Prox-PDA} with Newton-CG applied to
the subproblems, where $\bar{K}_k$ is defined as in
\eqref{per.iter.complex}. Then the following claims are true.
\begin{itemize}
\item[(i)] When $\epsilon_k^H \equiv \sqrt{\epsilon}/2$, then the
  total number of Hessian-vector products before Algorithm~\ref{Alg:
    Prox-PDA} stops and outputs an $\epsilon$-1o solution is bounded
  by
\begin{align*}
\max\{ 2 \min\{ n, J(U_H, \sqrt{\epsilon}/2) \} + 2, \Nmeo  \} \Ktotal.
\end{align*} 
For all $n$ sufficiently large, this bound is $\tilde{\sO}(\epsilon^{
  -5\eta/2 - 15/4 })$ \\
(which reduces to $\tilde{\sO}(\epsilon^{ \eta/2
  - 15/4 })$ when $c(x)$ is linear).
\item[(ii)] If we let $\epsilon_k^H \equiv \epsilon/2$, then the total number of Hessian-vector products before
  Algorithm~\ref{Alg: Prox-PDA} stops and outputs an $\epsilon$-1o
  solution with probability $1$ and $\epsilon$-2o with probability at least $(1 -
  \delta)^{\bar K_{T_\epsilon}}$ is bounded by
\begin{align*}
\max\{ 2 \min\{ n, J(U_H, \epsilon/2) \} + 2, \Nmeo  \} \Ktotal.
\end{align*}
For all $n$ sufficiently large, this bound is
$\tilde{\sO}(\epsilon^{-5\eta/2 - 11/2})$\\
(which reduces to
$\tilde{\sO}(\epsilon^{\eta/2 - 11/2})$ when $c(x)$ is linear).
  \end{itemize}
\end{corollary}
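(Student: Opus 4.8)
The plan is to multiply the per-subproblem Hessian-vector-product count from Corollary~\ref{Corr: NCG} by the total iteration complexity $\Ktotal$ from Theorem~\ref{thm: total complexity}, and then to bound the resulting expression's dependence on $\epsilon$. First I would record that at iteration $k$ of Algorithm~\ref{Alg: Prox-PDA} the Newton-CG method is run on $\psi_k$ (see \eqref{def: psik}), whose Hessian is uniformly bounded in norm on the relevant region by $U_H$ (see \eqref{ineq: UH}), so the quantity $U_{F,H}$ appearing in Corollary~\ref{Corr: NCG} may be replaced by $U_H$ for that subproblem. Since $J(\cdot,\epsilon_H)$ in \eqref{def: J} is nondecreasing in its first argument and $\epsilon_{k+1}^H$ is held constant over $k$ (equal to $\sqrt{\epsilon}/2$ in part (i), $\epsilon/2$ in part (ii)), the per-iteration count $\max\{2\min\{n,J(U_H,\epsilon_{k+1}^H)\}+2,\Nmeo\}$ is the same for every subproblem and factors out of $\sum_{k=1}^{T_\epsilon}\bar K_k=\Ktotal$, giving the displayed bound $\max\{2\min\{n,J(U_H,\cdot)\}+2,\Nmeo\}\,\Ktotal$. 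The probability statements (probability $1$ for the $\epsilon$-1o guarantee; probability at least $(1-\delta)^{\bar K_{T_\epsilon}}$ for the $\epsilon$-2o guarantee in part (ii)) transfer directly from Theorem~\ref{thm: total complexity}, since only the final subproblem's curvature certificate is needed for $x_{T_\epsilon}$ to be $\epsilon$-2o.

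Next I would estimate the order in $\epsilon$ of the per-iteration factor. From \eqref{eq:uc7} and $\rho=\sO(\epsilon^{-\eta})$ we get $U_H=\sO(\epsilon^{-\eta})$, hence $\kappa=(U_H+2\epsilon_H)/\epsilon_H=\sO(\epsilon^{-\eta}/\epsilon_H)$; in part (i) this is $\sO(\epsilon^{-\eta-1/2})$, and in part (ii) it is $\sO(\epsilon^{-\eta-1})$. Because $\kappa$ is polynomially bounded in $1/\epsilon$, the logarithmic factor in \eqref{def: J} is $\sO(\log(1/\epsilon))$ and is absorbed into $\tilde{\sO}$, so $J(U_H,\epsilon_H)=\tilde{\sO}(\sqrt{\kappa})$, equal to $\tilde{\sO}(\epsilon^{-\eta/2-1/4})$ in part (i) and $\tilde{\sO}(\epsilon^{-\eta/2-1/2})$ in part (ii). Also $\Nmeo=\sO(\epsilon_H^{-1/2})$, equal to $\sO(\epsilon^{-1/4})$ and $\sO(\epsilon^{-1/2})$ respectively, which is dominated by $J$ since $\eta\ge 1$. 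Therefore, once $n$ is large enough that the minima in \eqref{def: J} and \eqref{def: Nmeo} are attained by the terms not equal to $n$, the per-iteration factor is $\tilde{\sO}(\sqrt{\kappa})$.

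Multiplying by $\Ktotal$ then finishes the proof. In part (i), $\Ktotal=\sO(\epsilon^{-2\eta-7/2})$ by Theorem~\ref{thm: total complexity}(i), so the operation complexity is $\tilde{\sO}(\epsilon^{-\eta/2-1/4})\cdot\sO(\epsilon^{-2\eta-7/2})=\tilde{\sO}(\epsilon^{-5\eta/2-15/4})$; when $c(x)$ is linear, $\Ktotal=\sO(\epsilon^{\eta-7/2})$, while $U_H$ — and hence $\kappa$ — still scales with $\rho=\sO(\epsilon^{-\eta})$ through the term $\rho\nabla c(x)\nabla c(x)^T$ in $\nabla^2\psi_k$, so the product is $\tilde{\sO}(\epsilon^{\eta/2-15/4})$. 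In part (ii), $\Ktotal=\sO(\epsilon^{-2\eta-5})$ (resp.\ $\sO(\epsilon^{\eta-5})$ for linear $c$), giving $\tilde{\sO}(\epsilon^{-\eta/2-1/2})\cdot\sO(\epsilon^{-2\eta-5})=\tilde{\sO}(\epsilon^{-5\eta/2-11/2})$ (resp.\ $\tilde{\sO}(\epsilon^{\eta/2-11/2})$). The one point that needs care — the main obstacle, such as it is — is the order estimate of $\kappa$ together with the claim that the logarithm in \eqref{def: J} contributes only a $\tilde{\sO}$ factor; this is where the polynomial-in-$1/\epsilon$ bound $U_H=\sO(\epsilon^{-\eta})$ from \eqref{eq:uc7} is essential, and it is also why one must keep track of $\rho$ inside $\kappa$ even in the linear-$c$ case (where, in contrast, the Lipschitz constant $L_{k,H}$ entering $\Ktotal$ does become $\rho$-independent).
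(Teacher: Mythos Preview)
Your proposal is correct and follows essentially the same route as the paper: apply Corollary~\ref{Corr: NCG} with $U_{F,H}$ replaced by the uniform bound $U_H$ (valid because Lemma~\ref{lm: compact level set} confines the iterates to $S_{\hat\alpha}^0$), factor out the per-subproblem Hessian-vector count since $\epsilon_k^H$ is constant in $k$, bound $J(U_H,\epsilon_H)=\tilde{\sO}((U_H/\epsilon_H)^{1/2})$ using $U_H=\sO(\epsilon^{-\eta})$ from \eqref{eq:uc7}, observe that this dominates $\Nmeo$, and multiply by $\Ktotal$ from Theorem~\ref{thm: total complexity}. Your remark that $U_H$ still scales with $\rho$ in the linear-$c$ case (through $\rho\nabla c\nabla c^T$), even though $L_{k,H}$ does not, is exactly the right observation and mirrors what the paper uses implicitly.
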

\begin{proof}
Since $\{ \psi_k(x) \le \psi_k(x_k) \} \subseteq S_{\hat \alpha}^0 $ (Lemma~\ref{lm: compact level set}), then $\| \nabla^2 \psi_k(x) \| \le U_H$
on $\{ \psi_k(x) \le \psi_k(x_k) \}$ for each $k \ge 0$. Therefore,
from Corollary~\ref{Corr: NCG}, to solve the subproblem in iteration
\Rev{$k-1$} of Algorithm~\ref{Alg: Prox-PDA} \Rev{(for $k \ge 1$)}, Newton-CG requires at most
  \begin{equation} \label{eq:jd3}
( \max\{ 2 \min\{ n, J(U_H, \epsilon_{k}^H) \} + 2, \Nmeo  \} ) \bar K_k
\end{equation}
Hessian-vector products, where $\bar{K}_k$ is defined in \eqref{per.iter.complex}, and $J(\cdot,\cdot)$ is bounded
as in \eqref{def: J}. From the latter definition and the fact that
$U_H = \sO(\rho) = \sO(\epsilon^{-\eta})$, we have for sufficiently
large $n$ that
\begin{equation} \label{eq:jd4}
  J(U_H, \epsilon_{k}^H ) \le \min \left( n, \tilde{\sO}
  ((U_H/\epsilon_{k}^H)^{1/2}) \right) = \tilde{\sO} \left( ( \epsilon_{k}^H)^{-1/2}
    \epsilon^{-\eta/2} \right).
\end{equation}
From \eqref{def: Nmeo}, we have at iteration \Rev{$k-1$}, for sufficiently
large $n$, that
\begin{equation} \label{eq:jd5}
  \Nmeo = \min \left(n, \tilde{\sO}((\epsilon_{k}^H)^{-1/2}) \right) =
  \tilde{\sO} ((\epsilon_{k}^H)^{-1/2}).
\end{equation}
By noting that the bound in \eqref{eq:jd4} dominates that of \eqref{eq:jd5}, we have from \eqref{eq:jd3} that the number of
Hessian-vector products needed at iteration \Rev{$k-1$} is bounded by
\begin{equation} \label{eq:jd6}
   \tilde{\sO} \left( ( \epsilon_k^H )^{-1/2} \epsilon^{-\eta/2}
   \right) \bar K_k.
\end{equation}

To prove (i), we have $\epsilon_k^H = \sqrt{\epsilon}/2$, so by
substituting into \eqref{eq:jd6} and summing over
$k=1,2,\dotsc,T_{\epsilon}$, we obtain the following bound on the
total number of Hessian-vector products before termination:
\begin{equation} \label{eq:jd7}
  \tilde{\sO} (\epsilon^{-\eta/2-1/4}) \Ktotal,
\end{equation}
where $\Ktotal = \sO(\epsilon^{ - 2 \eta - 7/2 } )$ from
Theorem~\ref{thm: total complexity}(i). By substituting into
\eqref{eq:jd7}, we prove the result. When $c(x)$ is linear, we obtain the
tighter bound by using the estimate $\Ktotal = \sO(\epsilon^{ \eta -
  7/2 } )$ that pertains to this case.

For (ii), we have from Theorem~\ref{thm: total complexity}(ii) that
$x_{T_\epsilon}$ is an $\epsilon$-1o solution with probability 1 and
an $\epsilon$-2o solution with probability at least $(1 -
\delta)^{\bar K_{T_\epsilon}}$. By substituting $\epsilon_k^H =
\epsilon/2$ into \eqref{eq:jd6} and summing over $k=
1,\dotsc,T_{\epsilon}$, we have that the total number of
Hessian-vector products before termination is bounded by
\begin{equation} \label{eq:jd8}
  \tilde{\sO} (\epsilon^{-\eta/2-1/2}) \Ktotal,
\end{equation}
where $\Ktotal = \sO(\epsilon^{-2\eta - 5})$ from Theorem~\ref{thm:
  total complexity}(ii), so the result is obtained by substituting
into \eqref{eq:jd8}. When $c(x)$ is linear, we obtain the tighter
bound by using the estimate $\Ktotal = \sO(\epsilon^{\eta-5} )$ that
pertains to this case. \qed
\end{proof}

\section{Determining $\rho$} \label{sec: varyrho}
\Rev{
Our results above on outer iteration complexity, total iteration
complexity, and operation complexity for Algorithm~\ref{Alg: Prox-PDA}
are derived under the assumption that $\rho$ is larger than a certain
threshold. However, this threshold cannot be determined a priori
without knowledge of many parameters related to the functions and the
algorithm. In this section, we sketch a framework for determining a
sufficiently large value of $\rho$ without knowledge of these
parameters. This framework executes Algorithm~\ref{Alg: Prox-PDA} as
an inner loop and increases $\rho$ by a constant multiple in an outer
loop whenever convergence of Algorithm~\ref{Alg: Prox-PDA} has not
been attained in a number of iterations set for this outer loop.  The
framework is specified as Algorithm~\ref{Alg: Prox-PDA-2}.
\begin{algorithm}
\caption{ Proximal AL with trial value of $\rho$}\label{Alg: Prox-PDA-2}
\be
\item[0.] Choose initial multiplier $\Lambda_0$, positive sequences $\{ \rho_\tau \}_{\tau \ge 1}$ and $\{ T_\tau \}_{\tau \ge 1}$; set $\tau \leftarrow 1$.  
\item[1.] Call Algorithm~\ref{Alg: Prox-PDA} with $x_0 = z_\tau$, $\lambda_0 = \Lambda_0$, $\rho = \rho_\tau$ and run Algorithm~\ref{Alg: Prox-PDA} for $T_\tau$ number of iterations, or until the stopping criteria are satisfied.
  \item[2.] If the stopping criterion of Algorithm~\ref{Alg: Prox-PDA}
    are satisfied, STOP the entire algorithm and output solutions
    given by Algorithm~\ref{Alg: Prox-PDA}; otherwise,
    $\tau\leftarrow\tau+1$ and return to Step 1.  \ee
\end{algorithm}
The next theorem shows that $\{ \rho_{\tau} \}_{\tau \ge 1}$ and $\{
T_{\tau} \}_{\tau \ge 1}$ can be defined as geometrically increasing
sequences without any dependence on problem-related parameter, and
that this choice of sequences leads to an iteration complexity for
Algorithm~\ref{Alg: Prox-PDA-2} that matches that of
Algorithm~\ref{Alg: Prox-PDA} (from Theorem~\ref{thm: first-order
  complexity - inexactsubproblem}) to within a logarithm factor.
\begin{theorem} \label{thm: outer.iter.comp.recover}
Suppose that all the assumptions and settings in Theorem~\ref{thm:
  first-order complexity - inexactsubproblem} for Algorithm~\ref{Alg:
  Prox-PDA} hold except for the choice of $\rho$. In particular, the
values of $\epsilon \in (0,1)$, $\eta \in [0,2] $, $\beta$ and $R$ are
the same in each loop of Algorithm~\ref{Alg: Prox-PDA-2}, and $z_{\tau}$ satisfies $\| c(z_{\tau}) \|^2 \le \min \{
  C_0/\rho_{\tau} , 1 \}$, where $C_0$ is the constant appearing in
  \eqref{eq:alpha_l0-inexact}. Suppose that Algorithm~\ref{Alg:
  Prox-PDA-2} terminates when the conditions \eqref{stopcrit} are
satisfied. For user-defined parameters $q > 1$ and $T_0 \in \bZ_{++}$,
we define the sequences $\{ \rho_{\tau} \}_{\tau \ge 1}$ and $\{
T_{\tau} \}_{\tau \ge 1}$ as follows:
\begin{align*}
\begin{cases}
\rho_\tau = \max\{ q^\tau \epsilon^{2 - 2\eta}, 1 \}, \ T_\tau = \lceil T_0 q^\tau \rceil + 1, & \mbox{ if } \eta \in [0,1), \\
\rho_\tau = q^\tau, \ T_\tau = \lceil T_0q^\tau \rceil + 1, & \mbox{ if } \eta = 1, \\
\rho_\tau = q^\tau, \ T_\tau = \max\{ \lceil T_0q^\tau \epsilon^{2\eta - 2} \rceil + 1, T_0 \}, & \mbox{ if } \eta \in (1,2].
\end{cases}
\end{align*}
Then Algorithm~\ref{Alg: Prox-PDA-2} stops within $ \log_q \big(
\epsilon^{ \min\{\eta-2, -\eta \} } \big) + \sO(1)$ iterations.  The
number of iterations of Algorithm~\ref{Alg: Prox-PDA} that are
performed before Algorithm~\ref{Alg: Prox-PDA-2} stops is
$\tilde{\sO}( \epsilon^{\eta-2} )$.
\end{theorem}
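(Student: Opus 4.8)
The plan is to run the analysis of Theorem~\ref{thm: first-order complexity - inexactsubproblem} ``for free'' inside Algorithm~\ref{Alg: Prox-PDA-2}: there is a threshold $\rho^\star$, of order $\epsilon^{-\eta}$ and independent of the outer index $\tau$, such that every call to Algorithm~\ref{Alg: Prox-PDA} made with $\rho_\tau \ge \rho^\star$ is guaranteed to reach the stopping test \eqref{stopcrit} within a uniformly bounded number $T^\star = \sO(\epsilon^{\eta-2})$ of inner iterations. As soon as the geometric schedules $\{\rho_\tau\}$ and $\{T_\tau\}$ have grown enough that both $\rho_\tau \ge \rho^\star$ and $T_\tau \ge T^\star$, Algorithm~\ref{Alg: Prox-PDA-2} must stop; so the number of outer iterations is controlled by how long it takes the two schedules to cross these thresholds.

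First I would make $\rho^\star$ explicit. Examining the requirement \eqref{Parameter Ass.-inexact} and using that $\beta = \epsilon^\eta/2 \le \tfrac12$, that $\epsilon^{-\eta}\ge 1$, and that $\hat{\alpha}$ (hence $D_S$), $C_1$, $C_2$, $M_f$, $M_c$, $\sigma$, $\rho_0$ and $R$ do not depend on $\epsilon$ or on $\tau$ (the initial multiplier $\Lambda_0$ being the same in each loop, and $S_{\hat{\alpha}}^0$ and $\hat{\alpha}$ in \eqref{eq:alpha_l0-inexact} being fixed), one obtains a constant $\bar\rho \ge 1$ depending only on problem data for which $\rho \ge \bar\rho\,\epsilon^{-\eta}$ implies \eqref{Parameter Ass.-inexact}; set $\rho^\star := \bar\rho\,\epsilon^{-\eta}$. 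Since by hypothesis $\lambda_0 = \Lambda_0$, $\| c(z_\tau) \|^2 \le \min\{C_0/\rho_\tau,1\}$, the residual schedule, $\beta$, $R$, and Assumptions~\ref{Ass: Compact sublevel set}, \ref{Ass: weak} (with $\sS = S_{\hat{\alpha}}^0$) and \ref{Ass: f.ub} are in force in every inner call, Theorem~\ref{thm: first-order complexity - inexactsubproblem}(ii) --- together with the observation following \eqref{stopcrit} that Algorithm~\ref{Alg: Prox-PDA} with stopping rule \eqref{stopcrit} halts at iteration $T_\epsilon - 1$ --- applies to any inner call with $\rho_\tau \ge \rho^\star$ and shows it terminates within $T_\epsilon \le \lceil \hat{\Delta}\,\epsilon^{\eta - 2}\rceil + 1 =: T^\star$ iterations, where $\hat{\Delta}$ from \eqref{def: hatDta} is an $\epsilon$- and $\rho_\tau$-independent constant. (We may also assume $\rho_1 \ge \rho_0$ so that each subproblem is well posed; this affects only $\sO(1)$ constants.) Hence at any outer iteration $\tau$ with $\rho_\tau \ge \rho^\star$ and $T_\tau \ge T^\star$, Algorithm~\ref{Alg: Prox-PDA-2} stops at or before iteration $\tau$.

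Next comes the case analysis. Let $\tau_1 := \min\{\tau\ge 1 : \rho_\tau \ge \rho^\star\}$ and $\tau_2 := \min\{\tau \ge 1 : T_\tau \ge T^\star\}$; by the previous paragraph Algorithm~\ref{Alg: Prox-PDA-2} stops by outer iteration $\max\{\tau_1,\tau_2\}$. If $\eta \in [0,1)$, then $\rho_\tau = \max\{q^\tau \epsilon^{2-2\eta},1\} \ge \rho^\star$ first when $q^\tau = \Theta(\epsilon^{\eta-2})$, so $\tau_1 = \log_q(\epsilon^{\eta-2}) + \sO(1)$, and since $T^\star = \Theta(\epsilon^{\eta-2})$ and $T_\tau = \lceil T_0 q^\tau\rceil + 1$, also $\tau_2 = \log_q(\epsilon^{\eta-2}) + \sO(1)$; here $\eta-2 = \min\{\eta-2,-\eta\}$. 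If $\eta = 1$, then $\rho_\tau = q^\tau$, $\rho^\star = \Theta(\epsilon^{-1})$, $T^\star = \Theta(\epsilon^{-1})$, $T_\tau = \lceil T_0 q^\tau\rceil + 1$, so $\tau_1, \tau_2 = \log_q(\epsilon^{-1}) + \sO(1)$ with $\min\{\eta-2,-\eta\} = -1$. If $\eta \in (1,2]$, then $\rho_\tau = q^\tau$ gives $\tau_1 = \log_q(\epsilon^{-\eta}) + \sO(1)$, while $T_\tau \ge T_0 q^\tau \epsilon^{2\eta-2}$ together with $T^\star = \Theta(\epsilon^{\eta-2})$ gives $q^{\tau_2} = \Theta(\epsilon^{\eta-2}/\epsilon^{2\eta-2}) = \Theta(\epsilon^{-\eta})$, hence $\tau_2 = \log_q(\epsilon^{-\eta}) + \sO(1)$ with $-\eta = \min\{\eta-2,-\eta\}$. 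In all three cases $\max\{\tau_1,\tau_2\} = \log_q\big(\epsilon^{\min\{\eta-2,-\eta\}}\big) + \sO(1)$, which is the claimed outer-iteration bound; write $\tau_{\mathrm{stop}}$ for the actual stopping index, so $\tau_{\mathrm{stop}} \le \max\{\tau_1,\tau_2\}$ and $\tau_{\mathrm{stop}} = \tilde\sO(1)$.

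Finally, the total number of Algorithm~\ref{Alg: Prox-PDA} iterations performed is $\sum_{\tau=1}^{\tau_{\mathrm{stop}}} T_\tau$. In each case $T_\tau$ equals a $q^\tau$-proportional term plus an $\sO(1)$ additive term (the trailing $+1$, or the $\max\{\cdot,T_0\}$), so the sum is $\sO(T_{\tau_{\mathrm{stop}}}) + \sO(\tau_{\mathrm{stop}})$; since $\tau_{\mathrm{stop}} = \tilde\sO(1)$ and, plugging in the values of $\tau_{\mathrm{stop}}$ above, $T_{\tau_{\mathrm{stop}}} = \sO(\epsilon^{\eta-2})$ in all three cases (for $\eta \in (1,2]$ one uses $T_{\tau_{\mathrm{stop}}} = \sO(q^{\tau_{\mathrm{stop}}}\epsilon^{2\eta-2}) = \sO(\epsilon^{-\eta}\epsilon^{2\eta-2}) = \sO(\epsilon^{\eta-2})$), the total is $\tilde\sO(\epsilon^{\eta-2})$. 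I expect the only real work to be the bookkeeping in the second paragraph: verifying that the threshold $\bar\rho\,\epsilon^{-\eta}$ and the per-call bound $T^\star = \lceil\hat{\Delta}\epsilon^{\eta-2}\rceil + 1$ inherited from Theorem~\ref{thm: first-order complexity - inexactsubproblem} carry genuinely $\epsilon$-independent and $\tau$-independent constants --- in particular that $\hat{\alpha}$, $S_{\hat{\alpha}}^0$, $D_S$ and $\hat{\Delta}$ do not drift as $\rho_\tau$ increases --- since the $\eta$-split and the geometric sums are then routine.
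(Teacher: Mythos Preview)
Your proposal is correct and follows essentially the same approach as the paper's proof: identify the $\epsilon$-independent thresholds for $\rho$ (from \eqref{Parameter Ass.-inexact}) and for the inner iteration count (from \eqref{ineq: nn6}), compute how many geometric steps are needed for $\rho_\tau$ and $T_\tau$ to cross them in each $\eta$-regime, and then bound the total inner work. The paper only writes out the case $\eta\in(0,1)$ and says the others are similar, whereas you spell out all three; and the paper bounds the total inner iterations by (number of outer loops) $\times\ \sO(\epsilon^{\eta-2})$ rather than your geometric-sum argument, but both yield the same $\tilde\sO(\epsilon^{\eta-2})$ conclusion.
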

\begin{proof}
According to Theorem~\ref{thm: first-order complexity -
  inexactsubproblem}, at iteration $\tau$, the stopping criterion must
be satisfied within $T_\tau$ number of iterations if $\rho$ satisfies
\eqref{Parameter Ass.-inexact} and $T_\tau$ is greater than the upper
bound for $T_\epsilon$ estimated in Theorem~\ref{thm: first-order
  complexity - inexactsubproblem} (see \eqref{ineq: nn6}). Therefore,
Algorithm~\ref{Alg: Prox-PDA-2} is guaranteed to stop when
$\rho_{\tau}$ and $T_\tau$ are large enough.\\
\indent When $\eta \in (0,1)$, $\rho_\tau$ will satisfy \eqref{Parameter Ass.-inexact} if
\begin{align*}
q^\tau \epsilon^{2 - 2 \eta} & \ge \max\left\{ \frac{16 \max\{ C_1, C_2 \}}{ \epsilon^\eta }, \frac{(M_f + \beta D_S + 1)^2}{2 \sigma^2} + \rho_0, \frac{16(M_c^2 + \sigma^2) R }{ \sigma^4 }, 3 \rho_0, 1 \right\} \\
\Leftrightarrow \tau & \ge \max \left\{ \log_q \left( 16 \max\{ C_1, C_2 \} \epsilon^{\eta -2 } \right), \right. \\
& \qquad \left. \log_q \left( \max \left\{ \frac{(M_f + \beta D_S + 1)^2}{2 \sigma^2} + \rho_0, \frac{16(M_c^2 + \sigma^2) R }{ \sigma^4 }, 3 \rho_0, 1 \right\} \epsilon^{2\eta - 2} \right) \right\} \\
& = \log_q( \epsilon^{\eta - 2} ) + \sO(1).
\end{align*}
$T_\tau$ will be larger than the upper bound for $T_\epsilon$, that
is, $ \lceil \hat \Delta \epsilon^{\eta - 2} \rceil+ 1$ (for $\hat
\Delta$ defined in \eqref{def: hatDta}) if
\begin{align*}
T_0 q^\tau \ge \hat \Delta \epsilon^{\eta - 2} \Leftrightarrow \tau \ge \log_q \left( \hat \Delta \epsilon^{\eta - 2}/T_0 \right) = \log_q( \epsilon^{\eta - 2} ) + \sO(1).
\end{align*}
Therefore, Algorithm~\ref{Alg: Prox-PDA-2} will stop in $\log_q(
\epsilon^{\eta - 2} ) + \sO(1)$ number of iterations. Note that
$\rho_\tau = \sO( \epsilon^{-\eta} )$, $ T_\tau = \sO( \epsilon^{\eta
  - 2} ) $ for any $\tau$ before the algorithm stops. Therefore the
total number of iteration of Algorithm~\ref{Alg: Prox-PDA} is
$\tilde{\sO}(\epsilon^{\eta - 2})$. The same result holds when $\eta
\in [1,2]$ and the proof is similar (thus omitted).
\end{proof}
\remark{In Theorem~\ref{thm: outer.iter.comp.recover}, we almost
  recover the iteration complexity of Algorithm~\ref{Alg: Prox-PDA}
  derived in Theorem~\ref{thm: first-order complexity -
    inexactsubproblem}, except for a factor of $\log(1/\epsilon)$. The
  iteration complexity required to obtain $\epsilon$-2o
  (Corollary~\ref{Corr: 2complexity - inexact}) is immediate by
  Algorithm~\ref{Alg: Prox-PDA-2} if Step 1 of Algorithm~\ref{Alg:
    Prox-PDA} satisfies \eqref{eq:2oi}. To recover the iteration
  complexity of subproblem solver (Newton-CG) derived in
  Theorem~\ref{thm: total complexity}, we could use similar approach
  by setting a limit on the iteration of Newton-CG and increasing this
  limit geometrically with respect to $\tau$. The approach and
  analysis are quite similar to that presented above, so we omit the
  details.  }
}

\section{Conclusion} \label{sec: conclusion}

We have analyzed complexity of a Proximal AL algorithm to solve smooth
nonlinear optimization problems with nonlinear equality
constraints. Three types of complexity are discussed: outer iteration
complexity, total iteration complexity and operation complexity. In
particular, we showed that if the first-order (second-order)
stationary point is computed inexactly in each subproblem, then the
algorithm outputs an $\epsilon$-1o ($\epsilon$-2o) solution within
$\sO(1/\epsilon^{2-\eta})$ outer iterations ($\beta =
\sO(\epsilon^\eta)$, $\rho = \Omega (1/\epsilon^{\eta})$; $\eta \in [0,2]$
for first-order case and $\eta \in [1,2]$ for second-order case).  We
also investigate total iteration complexity and operation complexity
when the Newton-CG method of \cite{Royer2019} is used to solve the
subproblems. \Rev{A framework for determining the appropriate value of algorithmic parameter $\rho$ is presented, and we show that the
  iteration complexity increases by only a logarithmic factor for this
  approach by comparison with the version in which $\rho$ is known in
  advance.}

There are several possible extensions of this work. First, we may
consider a framework in which $\rho$ is varied within
Algorithm~\ref{Alg: Prox-PDA}. Second, extensions to nonconvex
optimization problems with nonlinear {\em inequality} constraints
remain to be studied.

\section*{Acknowledgments}
Research supported by Award N660011824020 from the DARPA Lagrange
Program, NSF Awards
1628384, 1634597, and 1740707; and Subcontract 8F-30039 from Argonne
National Laboratory.

\bibliographystyle{spmpsci}
\bibliography{ref}

\appendix

\section*{Appendix: Proofs of Elementary Results} \label{sec: App}


\paragraph{Proof of Theorem~\ref{OptCon}}
\begin{proof}
Since $x^*$ is a local minimizer of \eqref{eqcons-opt}, it is the
unique global solution of
\begin{equation}\label{opt: local}
\min \, f(x) + \frac{1}{4} \| x - x^* \|^4  \quad
\st \;\; c(x) = 0, \;\; \| x - x^* \| \le \delta,
\end{equation}
for $\delta > 0$ sufficiently small. For the same $\delta$, we define
$x_k$ to be the global solution of
\begin{equation}\label{opt: klocal}
  \min \, \quad f(x) + \frac{\rho_k}{2} \| c(x) \|^2 + \frac{1}{4} \|
  x - x^* \|^4 \quad \st \;\; \| x - x^* \| \le \delta,
\end{equation}
for a given $\rho_k$, where $\{ \rho_k \}_{k \ge 1}$ is a positive
sequence such that $\rho_k \to +\infty$. Note that $x_k$ is well
defined because the feasible region is compact and the objective is
continuous. Suppose that $z$ is any accumulation point of $\{ x_k
\}_{k \ge 1}$, that is, $x_k \to z$ for $k \in \sK$, for some
subsequence $\sK$. Such a $z$ exists because $\{ x_k \}_{k \ge 1}$
lies in a compact set, and moreover, $\| z - x^* \| \le \delta$. We
want to show that $z = x^*$. By the definition of $x_k$, we have for
any $k \ge 1$ that
\begin{align}
\label{ineq: nn9}
f(x^*) & = f(x^*) + \frac{\rho_k}{2}\| c(x^*) \|^2 + \frac{1}{4} \| x^* - x^* \|^4 \\
\notag
& \ge f(x_k) + \frac{\rho_k}{2}\| c(x_k) \|^2 + \frac{1}{4} \| x_k - x^* \|^4 
 \ge f(x_k) + \frac{1}{4} \| x_k - x^* \|^4.
\end{align}
By taking the limit over $\sK$, we have $f(x^*) \ge f(z) + \frac{1}{4}
\| z - x^* \|^4$. From \eqref{ineq: nn9}, we have
\begin{align} \label{ineq: infeas}
\frac{\rho_k}{2}\| c(x_k) \|^2 \le f(x^*) - f(x_k) \le f(x^*) -
\inf_{k \ge 1} f(x_k) < +\infty.
\end{align}
By taking limits over $\sK$, we have that $c(z) = 0$. Therefore, $z$
is the global solution of \eqref{opt: local}, so that $z =
x^*$.

Without loss of generality, suppose that $x_k \to x^*$ and $\| x_k -
x^* \| < \delta$. By first and second-order optimality conditions for
\eqref{opt: klocal}, we have
\begin{align}
\label{eq: 1ok}
\nabla f(x_k) + \rho_k \nabla c(x_k) c(x_k) + \| x_k - x^* \|^2 (x_k - x^*) & = 0, \\
\notag
\nabla^2 f(x_k) + \rho_k \sum_{i=1}^m c_i(x_k) \nabla^2 c_i(x_k) + \rho_k \nabla c(x_k) [ \nabla c(x_k) ]^T & \\
\label{eq: 2ok}
+ 2(x_k-x^*)(x_k-x^*)^T + \| x_k-x^* \|^2 I & \succeq 0.
\end{align}
Define $\lambda_k \triangleq \rho_k c(x_k)$ and $\epsilon_k \triangleq
\max\{ \| x_k - x^* \|^3, 3 \| x_k - x^* \|^2, \sqrt{ 2( f(x^*) -
    \inf_{k \ge 1} f(x_k) )/\rho_k} \}$. Then by \eqref{ineq:
    infeas}, \eqref{eq: 1ok}, \eqref{eq: 2ok} and Definition~\ref{Def:
    epsilonKKT2}, $x_k$ is $\epsilon_k$-2o. Note that $x_k \to x^*$
  and $\rho_k \to +\infty$, so $\epsilon_k \to 0^+$.  \qed
\end{proof}

\paragraph{Proof of Lemma~\ref{lm: Lb}}
\begin{proof}
We prove by contradiction. Otherwise for any $\alpha$ we could select sequence $\{ x_k \}_{k \geq
  1} \subseteq S_{\alpha}^0$ such that $f(x_k) + \frac{\rho_0}{2} \|
c(x_k) \|^2 < - k$. Let $x^*$ be an accumulation point of $\{ x_k
\}_{k \geq 1}$ (which exists by compactness of $S_\alpha^0$). Then
there exists index $K$ such that $f(x^*) + \frac{\rho_0}{2} \| c(x^*)
\|^2 \geq -K + 1 > f(x_k) + \frac{\rho_0}{2} \| c(x_k) \|^2 + 1$ for
all $k \geq K$, which contradicts the continuity of $f(x) +
\frac{\rho_0}{2} \| c(x) \|^2$. \qed
\end{proof}



%
%



\end{document}